\renewcommand\subsection{\@startsection{subsection}{2}%
\z@{-.5\linespacing\@plus-.7\linespacing}{.5\linespacing}%
{\normalfont\scshape}} 
\renewcommand\appendix{\par
  \setcounter{section}{0}
  \setcounter{subsection}{0}
  \setcounter{figure}{0}
  \setcounter{table}{0}

\renewcommand\thefigure{\Alph{section}\arabic{figure}}

\renewcommand\thetable{\Alph{section}\arabic{table}}
}
\let\origcitation\citation
  \def\citation#1{\g@addto@macro\mycites{,#1}\origcitation{#1}}}
\newcommand{\dom}{{\rm dom\,}}
\newcommand{\ds}{\displaystyle}
\newcommand{\nexto}{\kern -0.54em}
\newcommand{\R}{\mathbb{R}}
\newcommand{\dN}{\mathbb{N}}
\newtheorem{theorem}{Theorem}[section]
\newtheorem{corollary}[theorem]{Corollary}
\newtheorem{lemma}[theorem]{Lemma}
\newtheorem{proposition}[theorem]{Proposition}
\theoremstyle{definition}
\newtheorem{definition}[theorem]{Definition}
\newtheorem{remark}[theorem]{Remark}
\newtheorem{algorithm}{Algorithm}[section]
\newtheorem{fact}{Fact}[section]
\renewcommand{\theequation}{\arabic{equation}}
\newcommand{\argmin}{\operatornamewithlimits{argmin}}
\newcommand{\Argmin}{\operatornamewithlimits{Argmin}}
\numberwithin{equation}{section}
\title[A primal--dual algorithm for optimal control]{A primal--dual algorithm as applied to optimal control problems}
\author[R. S. Burachik]{Regina S. Burachik}
\address[R. S. Burachik]{Mathematics, UniSA STEM, University of South Australia, Mawson Lakes, S.A. 5095, Australia}
\email{{\tt regina.burachik@unisa.edu.au}}
\author[C. Y. Kaya]{C. Yal{\c c}{\i}n Kaya}
\address[C. Y. Kaya]{Mathematics, UniSA STEM, University of South Australia, Mawson Lakes, S.A. 5095, Australia}
\email{\tt yalcin.kaya@unisa.edu.au}
\author[X. Liu]{Xuemei Liu}
\address[X. Liu]{Mathematics, UniSA STEM, University of South Australia, Mawson Lakes, S.A. 5095, Australia}
\email{\tt xuemei.liu@mymail.unisa.edu.au.}
\keywords{Optimal control; Augmented Lagrangian; Banach space; Nonconvex optimization; Nonsmooth optimization; Subgradient methods; Duality scheme; Penalty function methods.}
\subjclass[2010]{????????}
\begin{document}

\begin{abstract}
We use a primal--dual technique for solving infinite dimensional problems arising from optimal control. Namely, we solve as examples a control-constrained double integrator optimal control problem and the challenging control-constrained free flying robot optimal control problem by means of the primal--dual scheme. The algorithm we use is an epsilon-subgradient method that can also be interpreted as a penalty function method.  We provide extensive comparisons of our approach with a traditional numerical approach.
\end{abstract}

\maketitle


\section{Introduction}
Let $U$ be a reflexive Banach space, $H$ a Hilbert space and let $K\subset U$. Consider the following infinite dimensional equality constrained optimization problem:
\begin{equation}\label{eq:generalPr}
\tag{P}    \min_{u\in U} \varphi(u)\;\; {\rm  s.t. }\;\; u\in K, \;\;\;\; h(u)=0\,,
\end{equation}
where $\varphi:U\to\R\cup\{\infty\}$ is lower semi-continuous  and $h:U\to H$ is a given function. In general, \eqref{eq:generalPr} is not convex, so if we are to use duality to solve it, we need to use an augmented Lagrangian approach. The type of Lagrangian we consider for addressing Problem \eqref{eq:generalPr} is an extension of the one in \cite{BFK2014,BKdsg} to infinite dimensions. The infinite dimensional case was recently studied in \cite{BurachikXuemei2023}, where  Problem \eqref{eq:generalPr} is addressed via a primal-dual scheme where the augmented Lagrangian $\ell:U\times\R_+\to\R$ is defined as follows
\begin{equation}\label{eq:generalLagrn}
    \ell(u,c):=\inf_{v\in K}\varphi(v)-\langle u,A(h(v))\rangle+ c\,\sigma(h(v))\,,
\end{equation}
where $u\in U$, $c\ge 0$, $A: H\to H$, is a suitable map, and $\sigma:H\to\R_+$ verifies $\sigma(v)=0$ if and only if $v=0$. The resulting primal-dual scheme is paired in \cite{BurachikXuemei2023} with an epsilon subgradient technique that solves the dual problem.  An important advantage of \eqref{eq:generalLagrn} is that, unlike other available versions (such as the ones proposed in \cite{Gdsg,BGKIdsg,BKMinexact2010,BIMdsg,BIMinexact2013}), the use of this $\ell$ induces a penalty function method for the choice $A=0$.  The aim of the present paper is to exploit this feature, and to use it for solving optimal control problems. More precisely, we will take $A=0$ and $\sigma$ as a suitable norm in $H$. Since it can be seen as a penalty approach, we denote the resulting epsilon-subgradient method as a primal--dual penalty (PDP) algorithm.

Compared with the classical penalty method, the PDP algorithm uses a subgradient direction as its update rule (for the penalty parameter) and improves the dual values in each iteration.  

The classical advantages of the augmented Lagrangian scheme here are that, (i) even when the original problem is not convex, the dual problem is, and hence it can be solved by standard techniques from convex analysis, (ii) there is no gap between primal and dual optimal values, and (iii) solving the dual problem provides a primal solution.

The fact that the primal--dual approach using the augmented Lagrangian \eqref{eq:generalLagrn} enjoys the advantages listed in (i)--(iii) has been proved in \cite{BurachikXuemei2023}. The interested reader can also see \cite{Dol2018,Dol20182} for an excellent introduction on different types of Lagrangians and their applications in solving various kinds of problems. We note that a different type of augmented Lagrangian technique has been used for solving finite-dimensional problems in \cite{Burachik Kaya Price 2021}.

Our first aim is theoretical, and it consists of determining a wide enough family of problems \eqref{eq:generalPr} such that the PDP algorithm is well defined when applied to \eqref{eq:generalPr}. This is needed so as to ensure that properties (i)--(iii) will hold for our scheme. We establish this fact in Theorem \ref{th:example_verify_H}. 

Our second aim is practical, and it is to show that we can apply the PDP algorithm to solve challenging optimal control problems. We achieve this aim by addressing two optimal control problems that do not have an analytical solution available. These problems are the control-constrained double integrator and the free flying robot~\cite{BY2013,BYir,Sakawa1999,VosMau2006}. While the first one of these problems is convex, the second one is highly nonconvex.

Projection-type methods can be used to solve some optimal control problems (for example the one in~\cite{BBY}), as long as they are convex. Our approach via the PDP algorithm, however, can solve also non-convex instances of these problems. Even when dealing with convex problems, the projection is usually difficult to compute unless we project onto simple sets. When compared with the penalty methods proposed in \cite{DGK1998,DGK2000,DGK2000_2,DT2011}, we note that the latter works consider either simple problems or those with analytical solutions. Moreover, the methods in \cite{DGK1998,DGK2000,DGK2000_2,DT2011} have not been implemented. 

\noindent  The paper is organized as follows. In Section~\ref{sec:Pre}, we give the preliminaries on functional analysis, which help in building our assumptions on Problem \eqref{eq:generalPr}.  Section \ref{ssec:PD} provides our theoretical framework, where we show that the family of problems we address verifies the necessary assumptions (see Theorem \ref{th:example_verify_H}). In this section we recall (i) the properties of the duality framework, as well as  (ii) the definition of the PDP method and its properties, which were established in  \cite{BurachikXuemei2023}. 
In Section~\ref{sec:problemP1}, we define a class of optimal control problems in the format of Problem \eqref{eq:generalPr}, and derive its conditions of optimality. Also in this section we describe our discretization scheme.
In Sections~\ref{sec:DoubleIntegrator} and \ref{sec:FFRobot}, we implement the PDP algorithm for solving the constrained double integrator, and the challenging free-flying robot problem, respectively. In these sections we also show the performance of our approach and compare it with a conventional numerical approach. Section~\ref{sec:conclusion} contains the conclusion and further discussion. To simplify the presentation, longer, or more involved, proofs are given in an \hyperlink{prf:lem_varfi}{Appendix} at the end of our paper.

\section{Preliminaries}\label{sec:Pre}

To determine a general family of problems and show that certain optimal control problems belong to that family, we need to recall a few results from functional analysis, which we do in the next subsection.

\subsection{Some Functional Analysis Tools}
Let $X$ be a reflexive Banach space, $X^*$ its topological dual (i.e., the set all continuous linear functionals from $X$ to $\R$), and $H$ a Hilbert space. We denote by $\langle \cdot, \cdot \rangle$ both the duality product in $X\times X^*$ and the scalar product in $H$. Unless explicitly indicated, we denote by $\|\cdot\|$ the norm of $X$ or $H$.  We use the notation $\R_{+\infty}=\R_{\infty}:=\R\cup\{+\infty\}$. Given a function $g:X\to{\R}_{+\infty}\cup\{-\infty\}$, the {\em effective domain\index{effective domain}} of $g$ is the set $\dom g:=\{x\in X\::\: g(x)< +\infty\}$. We say that $g$ is {\em proper}\index{proper} if $g(x)>-\infty$ and $\dom g\neq \emptyset$.  The set $lev_{g}(\alpha):= \{x\in X\::\: g(x)\le \alpha\}$ is the $\alpha$-{\em level set of}\index{level set} $g$. Given $C\subset X$, the {\em indicator function of $C$}\index{indicator function} is defined as $\delta_C(v):=0$ if $v\in C$ and $+\infty$ otherwise. If $C=\{z\}$ is a singleton, we denote $\delta_{\{z\}}=:\delta_z$. 

The topology induced by the norm (in $X$ or $H$), is called the {\em strong} topology. The weak topology in $X$ (weak topology in $H$) is the coarsest topology that makes all elements  of $X^*$ (all elements of $H^*=H$)  continuous. We will need the following definitions concerning the weak topology. For $A\subset X$, we denote by ${\rm cl\,}A$ the strong closure of $A$ and by ${\overline{A}}^w$ the weak closure of $A$.  In most of what follows, when a topological property is mentioned by its own, this means that the property holds w.r.t. the strong (i.e., the norm) topology. For instance, if we write ``$A$ is closed", we mean ``$A$ is {\sc strongly} closed". If a property holds w.r.t. the weak topology, we will mention the term ``weak" (or ``weakly") explicitly. For instance, we may say weakly closed, (or w-closed), weakly compact (or w-compact), etc.  Recall that a function $\varphi:X\to\R_{+\infty}$ is {\emph {weakly lower semi-continuous}}  (\emph {w-lsc}) when it is lsc w.r.t. the weak topology in $X$. Namely, when $epi\; \varphi$ is w-closed. Let $\{u_n\},\{w_n\}\subset X$, we denote the fact that $\{u_n\}$ converges weakly to $u$ as $u_n\rightharpoonup u$, and the fact that $\{w_n\}$ converges strongly to $w$ as $w_n\to w$.

 We recall next some well-known facts from functional analysis, most of which can be found in \cite{Brezis}. The reader familiar with functional analysis can skip this section, with the exception of Lemma \ref{lem:varfi}, which, to our knowledge, is new.
  
  \begin{fact}\label{lem:Weakly Closed}
 Let $X$ be a reflexive Banach space, $H$ be a Hilbert space. Assume that $K\subset X$ is nonempty. The following hold.
 \begin{itemize}
     \item[(a)] $K\neq \emptyset$ is weakly closed if and only if the indicator function $\delta_K$ is proper and w-lsc.
     \item[(b)] If $K\subset X$ is weakly compact, then it is weakly closed.
     \item[(c)]  A convex subset of $X$ is weakly closed if and only if it is closed.
     \end{itemize}
 \end{fact}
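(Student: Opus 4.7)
The plan is to handle the three parts separately, since each is a different standard functional-analytic fact; none should require extensive calculation, but each does use a slightly different tool.

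For part (a), I would first note that $\delta_K$ only takes values in $\{0,+\infty\}$ and has effective domain $K$, so $\delta_K$ is proper if and only if $K\neq\emptyset$, which is part of the hypothesis. The substance is then the equivalence between weak closedness of $K$ and weak lower semicontinuity of $\delta_K$. I would prove this via sublevel sets: for $\alpha<0$, $lev_{\delta_K}(\alpha)=\emptyset$; for $\alpha\ge 0$, $lev_{\delta_K}(\alpha)=K$. Since a function is w-lsc if and only if all its sublevel sets are w-closed, the equivalence is immediate. (Alternatively, one could use the epigraph characterization, $\epi\delta_K=K\times[0,\infty)$, which is w-closed in $X\times\R$ iff $K$ is w-closed in $X$.)

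For part (b), the key observation is that the weak topology on $X$ is Hausdorff, because $X^*$ separates points of $X$ by the Hahn--Banach theorem. In any Hausdorff space, compact sets are closed, so weakly compact implies weakly closed. I would just invoke this basic point-set topology fact after justifying the Hausdorff property.

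For part (c), one direction is trivial: every weak neighborhood is a strong neighborhood, so the weak topology is coarser than the strong one, and hence any weakly closed set is automatically (strongly) closed --- convexity is not even used here. The nontrivial direction is Mazur's theorem: a closed convex set is weakly closed. The standard argument is by contrapositive. If $K$ is convex and strongly closed but $x_0\notin K$, then by the Hahn--Banach separation theorem (applied to the disjoint convex sets $\{x_0\}$ and $K$, with $\{x_0\}$ compact and $K$ closed), there exist $f\in X^*$ and $\alpha\in\R$ with $f(x_0)<\alpha<f(y)$ for all $y\in K$. The set $\{x\in X:f(x)<\alpha\}$ is a weak neighborhood of $x_0$ disjoint from $K$, showing that $x_0$ is not in the weak closure of $K$. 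I expect this to be the most substantive step, since parts (a) and (b) are essentially formal; the main obstacle in (c) is setting up the separation argument cleanly, but since Mazur's theorem is stated in standard references such as \cite{Brezis}, I would simply cite it rather than reproduce the full proof.
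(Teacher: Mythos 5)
Your three arguments are all correct, and they are precisely the standard textbook proofs: (a) via the sublevel-set (or epigraph) characterization of lower semicontinuity applied to $\delta_K$, whose sublevel sets are either $\emptyset$ or $K$; (b) via the Hausdorff property of the weak topology, which follows from Hahn--Banach separation of points; and (c) via Mazur's theorem, i.e.\ \cite[Theorem 3.7]{Brezis}. The paper itself supplies no proof of this Fact --- it is stated as a recalled result with the remark that most such facts can be found in \cite{Brezis} --- so your write-up simply makes explicit the arguments the authors are implicitly citing. One small observation: none of your three proofs uses the reflexivity of $X$ (nor the Hilbert space $H$, which does not appear in the statement at all), so the hypothesis is stronger than needed here; reflexivity matters elsewhere in the paper (e.g.\ Theorem~\ref{th:BA1} and Corollary~\ref{cor:coer}) but not for this Fact.
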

  
We recall the following definitions. 
 
 \begin{definition}[Weak compactness; sequential compactness; coercive] \hfill\break 
 \label{def:weakly_compact_coercive}
Let $X$ be a Banach space, $A\subset X$ and $\varphi : X\to \R_{+\infty}$. 
\begin{itemize}
    \item[(a)]  $A$ is {\em weakly-compact}\index{weakly-compact} when its weak closure, ${\overline{A}}^w$,  is compact w.r.t the weak topology.
    \item[(b)] $A\subset X$ is {\em sequentially-compact}\index{sequentially-compact} (respectively, {\em weakly sequentially-compact}) when every\linebreak $\{x_n\}\subset A$ has a subsequence  converging strongly (respectively, weakly) to a limit in $A$. 
     \item[(c)] The function $\varphi : X\to \R_{+\infty}$ is {\em coercive}\index{coercive} when $\lim_{\|x\|\to \infty} \varphi(x)=+\infty$. 
\end{itemize}
\end{definition}
 
The equivalence between compactness and sequential-compactness in normed spaces allows the use of sequences when dealing with compact sets in $X$. To deal with weakly compact sets in terms of sequences, we will use Eberlein--Smulian theorem \cite[Problem 10(3), p. 448]{Brezis}, recalled next.

\begin{theorem}[Eberlein--Smulian]\label{th:ES}
Let $A\subset X$. Then $A$ is weakly compact if and only if it is weakly sequentially-compact.
\end{theorem}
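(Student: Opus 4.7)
The plan is to establish the two implications by distinct methods, since neither follows for free: compactness and sequential compactness agree on metrizable spaces, but the weak topology on a Banach space is generically non-metrizable, so the equivalence is genuinely nontrivial in both directions. My approach will rely on the canonical embedding $J\colon X \hookrightarrow X^{**}$, Banach--Alaoglu, and Goldstine's theorem for the hard direction, and a separability reduction with a diagonal argument for the other.

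For weakly compact $\Rightarrow$ weakly sequentially compact, I would start by reducing to a separable setting. Given $\{x_n\} \subset A$, set $Y := \overline{\mathrm{span}}\{x_n : n \in \dN\}$, a closed separable subspace that is weakly closed by convexity (Fact \ref{lem:Weakly Closed}(c)). Then $A \cap Y$ is weakly compact. Using Hahn--Banach together with the separability of $Y$, I would select a countable family $\{f_k\} \subset X^*$ whose restrictions to $Y$ separate the points of $Y$. A standard diagonal extraction then yields a subsequence $\{x_{n_k}\}$ along which $f_j(x_{n_k})$ converges for every $j$; weak compactness supplies a weak cluster point $x \in A \cap Y$, and the separating property of the countable family forces $x_{n_k} \rightharpoonup x$.

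For the converse, weakly sequentially compact $\Rightarrow$ weakly compact, I would first observe that $A$ is norm-bounded: otherwise one extracts a sequence $\{x_n\}$ with $\|x_n\| \to \infty$, and any weakly convergent subsequence would be bounded by the Uniform Boundedness Principle, a contradiction. Hence $J(A)$ sits inside a closed ball of $X^{**}$, which is weak-$*$ compact by Banach--Alaoglu, so $\overline{J(A)}^{w^*}$ is weak-$*$ compact. Because the weak topology on $X$ agrees with the pull-back under $J$ of the weak-$*$ topology on $X^{**}$, it suffices to prove $\overline{J(A)}^{w^*} \subset J(X)$, i.e., every weak-$*$ cluster point $x^{**}$ of $J(A)$ has the form $J(x)$ for some $x \in X$ (necessarily in $\overline{A}^w$).

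The main obstacle lies precisely in this last step. Here I would appeal to Goldstine's theorem and an inductive, Helly-type construction to build a sequence $\{x_n\} \subset A$ together with functionals $\{f_n\} \subset X^*$ whose pairings approximate $x^{**}$ in prescribed coordinates. Sequential weak compactness then provides $x_{n_k} \rightharpoonup x \in X$; matching values of $J(x)$ and $x^{**}$ on a carefully chosen countable dense subset of a relevant subspace of $X^*$ yields $J(x) = x^{**}$. Arranging the countable data so that it genuinely pins down the cluster point is the delicate heart of Eberlein--Smulian, and the step where I expect the most care to be required.
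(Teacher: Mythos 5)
First, a point of comparison: the paper does not prove Theorem \ref{th:ES} at all; it is quoted as a classical fact from \cite[Problem 10(3), p.~448]{Brezis}, so there is no in-paper argument to measure yours against. On its own terms, your outline is the standard Whitley-style proof, and the direction ``weakly compact $\Rightarrow$ weakly sequentially compact'' is essentially right: the reduction to the closed separable span $Y$, the countable separating family in $X^*$, and the diagonal extraction are all correct. One step there still needs to be said explicitly: from $f_j(x_{n_k})\to f_j(x)$ for the countable family alone you cannot conclude $x_{n_k}\rightharpoonup x$; you must argue that every weak cluster point of $\{x_{n_k}\}$ (which exists by compactness) agrees with $x$ on the separating family and hence equals $x$, and that a sequence in a compact set with a unique cluster point converges to it. Routine, but it is the point where the separating family actually earns its keep.

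The genuine gap is in the converse. The inductive construction you defer --- choosing $x_n\in A$ and functionals $f_n$ that are uniformly norming on the span of $x^{**}$ and the $Jx_i$, so that for the weak limit $x$ of a subsequence one gets $\langle x^{**}-Jx,f_n\rangle=0$ for all $n$ while $\|x^{**}-Jx\|$ is controlled by $\sup_n|\langle x^{**}-Jx,f_n\rangle|$ --- is not a technical afterthought; it is the entire content of the hard direction, and you explicitly leave it unexecuted. Moreover, even granting $\overline{J(A)}^{w^*}\subset J(X)$, you have only shown that $\overline{A}^{w}$ is weakly compact; to conclude that $A$ itself is weakly compact you must also show $A$ is weakly closed, i.e.\ that every point of $\overline{A}^{w}$ is the weak limit of a \emph{sequence} from $A$ (so that sequential compactness returns it to $A$). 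That angelicity step requires the same construction and is not addressed. Finally, note that in the paper's standing setting $X$ is reflexive, so $J$ is surjective and the deferred Goldstine/Helly step is vacuous there: boundedness plus Kakutani already gives weak compactness of $\overline{A}^{w}$, and only the weak closedness of $A$ would remain to be argued. Exploiting that would have let you give a complete proof of the statement as the paper actually uses it.
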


Next we quote results that connect boundedness, closedness and compactness both in strong and weak topologies. The next result, a corollary of Bourbaki--Alaoglu's theorem, is \cite[Corollary 3.22]{Brezis}. 

\begin{theorem}\label{th:BA1}
Let $E$ be a reflexive Banach space. Let $K\subset  E$ be a bounded,
closed, and convex subset of $E$. Then $K$ is weakly compact.
\end{theorem}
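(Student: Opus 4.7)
The plan is to deduce this from the Bourbaki--Alaoglu theorem applied to the bidual, combined with the reflexivity assumption and the convexity of $K$. First I would recall the statement of Bourbaki--Alaoglu: for any Banach space $Y$, the closed unit ball $B_{Y^*}$ of the dual is compact in the weak-$*$ topology of $Y^*$. Applying this with $Y=E^*$, the closed unit ball $B_{E^{**}}$ of the bidual is weak-$*$ compact in $E^{**}$.

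Next I would invoke reflexivity. Let $J\colon E\to E^{**}$ denote the canonical embedding defined by $\langle J(x),f\rangle=\langle x,f\rangle$ for $f\in E^*$. Reflexivity says $J$ is a surjective isometry. A direct check from the definitions of the two topologies shows that $J$ is also a homeomorphism from $(E,\text{weak})$ onto $(E^{**},\text{weak-}*)$: a net $x_\alpha$ converges weakly to $x$ in $E$ iff $\langle x_\alpha,f\rangle\to\langle x,f\rangle$ for every $f\in E^*$, which is precisely weak-$*$ convergence of $J(x_\alpha)$ to $J(x)$ in $E^{**}$. Combining this with the previous paragraph, the closed ball $rB_E$ of any radius $r>0$ is weakly compact in $E$.

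Now I would use the hypotheses on $K$ to shrink to this situation. Since $K$ is bounded, there exists $r>0$ with $K\subset rB_E$. Since $K$ is convex and (strongly) closed, Fact \ref{lem:Weakly Closed}(c) guarantees that $K$ is weakly closed. Therefore $K$ is a weakly closed subset of the weakly compact set $rB_E$, so $K$ itself is weakly compact.

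The main obstacle in this plan is not any single technical step but rather the identification, via reflexivity, of the weak topology on $E$ with the weak-$*$ topology on $E^{**}$, which is what allows Bourbaki--Alaoglu to be transported from the bidual back to $E$. Everything else — the passage from boundedness to being contained in a multiple of the unit ball, and the passage from closed-plus-convex to weakly closed — is routine once Fact \ref{lem:Weakly Closed}(c) is in hand.
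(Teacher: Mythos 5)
Your proof is correct and follows the standard argument: the paper itself gives no proof of Theorem \ref{th:BA1}, simply citing it as \cite[Corollary 3.22]{Brezis}, and the proof there proceeds exactly as you outline (Banach--Alaoglu in the bidual, reflexivity identifying the weak topology on $E$ with the weak-$*$ topology on $E^{**}$ so that closed balls of $E$ are weakly compact, then Mazur's theorem --- Fact \ref{lem:Weakly Closed}(c) --- to pass from closed convex to weakly closed). No gaps.
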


\begin{corollary}\label{cor:CCB}
If $X$ is a Banach space, then every weakly compact set is closed and bounded. Consequently, every weakly convergent sequence must be bounded.
\end{corollary}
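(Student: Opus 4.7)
The plan is to prove the two assertions in the corollary in sequence: first that any weakly compact set $K\subset X$ is both (strongly) closed and bounded, and then to deduce the bounded\-ness of weakly convergent sequences as a quick corollary via Eberlein--Smulian.

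Strong closedness is essentially for free. By Fact~\ref{lem:Weakly Closed}(b), a weakly compact set is weakly closed. Since the weak topology is by construction coarser than the norm topology, every weakly closed set has more open complements in the norm topology than in the weak one, hence is norm-closed. So $K$ is closed in the strong sense fixed by the paper's convention.

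The real work is boundedness, and my plan is to invoke the Banach--Steinhaus (uniform boundedness) theorem through the canonical isometric embedding $J:X\to X^{**}$, $(Jx)(f):=f(x)$, which satisfies $\|Jx\|_{X^{**}}=\|x\|_X$. Fix any $f\in X^*$; by the defining property of the weak topology, $f$ is weakly continuous, so $f(K)\subset\R$ is weakly compact in $\R$. Since the weak and norm topologies coincide on the finite-dimensional space $\R$, $f(K)$ is compact, and in particular bounded: there is $M_f$ with $|f(x)|\le M_f$ for every $x\in K$. In other words, the family $\{Jx:x\in K\}\subset X^{**}$ is pointwise bounded on $X^*$. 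Since $X^*$ is a Banach space, Banach--Steinhaus yields
\begin{equation*}
\sup_{x\in K}\|x\|_X \;=\; \sup_{x\in K}\|Jx\|_{X^{**}} \;<\; +\infty,
\end{equation*}
which is boundedness of $K$. The one delicate point here, and the step I consider the main obstacle, is recognizing that the underlying Banach space for the uniform boundedness argument is $X^*$, not $X$ itself, and that the elements of $K$ must be reinterpreted as functionals on $X^*$ via $J$; everything else is straightforward bookkeeping about which topology makes which map continuous.

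For the consequence, let $x_n\rightharpoonup x$ and set $A:=\{x_n:n\in\dN\}\cup\{x\}$. Any subsequence $\{x_{n_k}\}$ still converges weakly to the point $x\in A$, so $A$ is weakly sequentially-compact. By the Eberlein--Smulian theorem (Theorem~\ref{th:ES}) $A$ is weakly compact, and the first part of the corollary, just proved, forces $A$ to be bounded; in particular the sequence $\{x_n\}$ is bounded.
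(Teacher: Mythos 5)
Your proof is correct. Note that the paper itself offers no argument for this corollary: it is quoted as a standard fact (a consequence of the uniform boundedness principle, from Brezis), so there is no in-paper proof to compare against; what you have written is precisely the textbook argument. Closedness via ``weakly closed $\Rightarrow$ norm closed'' (the weak topology being coarser) is right, and the boundedness step --- pointwise boundedness of $\{Jx : x\in K\}$ on the Banach space $X^*$ followed by Banach--Steinhaus and the isometry of $J$ --- is exactly the intended mechanism, and correctly does not require reflexivity of $X$. Two small remarks. First, the detour through Eberlein--Smulian for the final assertion is more machinery than needed: the set $A=\{x_n : n\in\dN\}\cup\{x\}$ is weakly compact directly, since a convergent sequence together with its limit is compact in \emph{any} topology (a weakly open set containing $x$ absorbs all but finitely many terms); alternatively, one can skip compactness altogether and apply Banach--Steinhaus directly to the sequence, since $f(x_n)\to f(x)$ gives pointwise boundedness. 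Second, your justification that $A$ is weakly sequentially-compact glosses over the fact that an arbitrary sequence in $A$ need not be a subsequence of $\{x_n\}$ (it may repeat terms or dwell on $x$); this is repairable in one line but is the only place where the write-up is not airtight. Neither point affects the validity of the proof.
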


We will also need the following two results involving functions defined on $X$. The first one is \cite[Corollary 3.9]{Brezis}, and is a direct consequence of Fact \ref{lem:Weakly Closed}(c). The second result can be found, e.g., in \cite[Corollary 2.2]{BurachikXuemei2023}.

\begin{theorem}\label{th:BA2}
Assume that $\varphi : X\to \R_{+\infty}$ is convex. Then $\varphi$ is w-lsc if and only if it is lsc.
\end{theorem}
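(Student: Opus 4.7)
The plan is to reduce both notions of lower semi-continuity to (weak or strong) closedness of the epigraph, and then invoke Fact \ref{lem:Weakly Closed}(c) to exchange the two. Recall that, by definition, $\varphi$ is lsc (respectively, w-lsc) if and only if
\[
\epi \varphi := \{(x,\alpha)\in X\times\R : \varphi(x)\le \alpha\}
\]
is closed (respectively, weakly closed) as a subset of $X\times\R$. Since $X$ is a reflexive Banach space, so is the product $X\times\R$, and the weak topology on this product coincides with the product of the weak topology on $X$ and the usual topology on $\R$. Hence Fact \ref{lem:Weakly Closed}(c) can be applied to subsets of $X\times\R$.

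The direction ``w-lsc $\Rightarrow$ lsc'' is immediate: the weak topology is coarser than the strong (norm) topology, so every weakly closed set is strongly closed. Applied to $\epi \varphi$, this gives lsc from w-lsc without using convexity.

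For the converse, assume $\varphi$ is convex and lsc. Convexity of $\varphi$ implies that $\epi \varphi$ is a convex subset of $X\times\R$, and lower semi-continuity of $\varphi$ implies $\epi \varphi$ is closed. Hence $\epi \varphi$ is a closed, convex subset of the reflexive Banach space $X\times\R$, and by Fact \ref{lem:Weakly Closed}(c) it is weakly closed. Therefore $\varphi$ is w-lsc, completing the proof.

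There is no genuine obstacle here; the only point that requires a brief justification is that Fact \ref{lem:Weakly Closed}(c), stated for subsets of a reflexive Banach space $X$, can be applied to subsets of $X\times\R$. This is valid because $X\times\R$ is itself reflexive and its weak topology agrees with the product of the weak topologies, so the Hahn--Banach separation argument underlying Fact \ref{lem:Weakly Closed}(c) transfers verbatim.
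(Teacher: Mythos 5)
Your proof is correct and follows exactly the route the paper indicates: the paper does not write out a proof but cites \cite[Corollary 3.9]{Brezis} and remarks that the result is a direct consequence of Fact \ref{lem:Weakly Closed}(c), which is precisely the epigraph-plus-convexity argument you carry out. Your added justification that Fact \ref{lem:Weakly Closed}(c) applies in the product space $X\times\R$ is a sensible and correct elaboration of that one-line reduction.
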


\begin{corollary}\label{cor:coer}
Let $X$ be a reflexive Banach space and let $\varphi: X\to \R_{+\infty}$ be w-lsc. Then $\varphi$ is coercive if and only if all its level sets are weakly compact. In this situation, all the level sets are closed and bounded.
\end{corollary}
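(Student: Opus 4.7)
The plan is to prove the two implications separately, and then derive the closed-and-bounded conclusion as a free by-product of weak compactness.

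For the forward direction (coercive $\Rightarrow$ every level set weakly compact), fix $\alpha\in\R$ and consider $L_\alpha:=lev_\varphi(\alpha)$. First I would argue that $L_\alpha$ is weakly closed: writing $L_\alpha=\{x : \varphi(x)\le \alpha\}$, weak lower semi-continuity of $\varphi$ means $\epi\varphi$ is w-closed, and a standard argument then shows each sublevel set inherits w-closedness (alternatively, take any net $x_\nu\rightharpoonup x$ in $L_\alpha$ and apply w-lsc to conclude $\varphi(x)\le\liminf \varphi(x_\nu)\le\alpha$). Next I would argue that $L_\alpha$ is (strongly) bounded: if not, there is $\{x_n\}\subset L_\alpha$ with $\|x_n\|\to\infty$, contradicting $\lim_{\|x\|\to\infty}\varphi(x)=+\infty$.

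To turn ``weakly closed and bounded'' into ``weakly compact'' I would invoke reflexivity of $X$, which by Kakutani's theorem (a consequence of Theorem \ref{th:BA1} applied to balls) implies that every bounded sequence has a weakly convergent subsequence. Thus for any $\{x_n\}\subset L_\alpha$, boundedness yields a subsequence $x_{n_k}\rightharpoonup x$, and weak closedness places $x\in L_\alpha$. Hence $L_\alpha$ is weakly sequentially compact, and the Eberlein--Smulian theorem (Theorem \ref{th:ES}) promotes this to weak compactness.

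For the reverse direction (all level sets weakly compact $\Rightarrow$ $\varphi$ coercive), I would argue by contradiction. If $\varphi$ is not coercive, there is a sequence $\{x_n\}\subset X$ with $\|x_n\|\to\infty$ and $\varphi(x_n)\le M$ for some $M\in\R$. Then $\{x_n\}\subset lev_\varphi(M)$. By assumption $lev_\varphi(M)$ is weakly compact, hence bounded by Corollary \ref{cor:CCB}, contradicting $\|x_n\|\to\infty$.

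Finally, for the concluding assertion, in either case all level sets are weakly compact, so by Fact \ref{lem:Weakly Closed}(b) they are weakly closed (hence strongly closed, since the strong topology is finer) and by Corollary \ref{cor:CCB} they are bounded. I do not expect any serious obstacle; the only point requiring care is that $L_\alpha$ need not be convex, so Theorem \ref{th:BA1} does not apply directly to $L_\alpha$ itself. The remedy, as above, is to use reflexivity of $X$ only through the weak sequential compactness of bounded sets, and then appeal to Eberlein--Smulian to recover weak compactness of $L_\alpha$ from sequential weak compactness.
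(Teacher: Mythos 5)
Your proof is correct. Note that the paper itself does not prove Corollary \ref{cor:coer}; it simply cites \cite[Corollary 2.2]{BurachikXuemei2023}, so there is no in-text argument to compare against. Your blind reconstruction is a complete, self-contained proof assembled from exactly the tools the paper has already laid out: w-lsc gives weak closedness of each level set, coercivity gives boundedness, reflexivity (via Theorem \ref{th:BA1} applied to balls) gives weak sequential compactness of bounded weakly closed sets, and Theorem \ref{th:ES} upgrades that to weak compactness; the converse and the final ``closed and bounded'' claim both follow from Corollary \ref{cor:CCB}. The one point where a careless argument would go wrong --- applying Theorem \ref{th:BA1} directly to the level set, which need not be convex --- is exactly the point you flag and correctly route around through sequential compactness. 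Nothing is missing.
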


\begin{definition}\label{def:LP}
Let $1\le p<\infty$, ${\mathcal L}^p([0,t_f];\R^m)$ be the Banach space of Lebesgue measurable functions $z:[0,t_f]\to \R^m$, with finite ${\mathcal L}^p$ norm, denoted $\|\cdot\|_{{\mathcal L}p}$, namely,
\[
{\mathcal L}^p([0,t_f];\R^m)=\left\{ z:[0,t_f]\to \R^m \,\,|\,\,
\|z\|_{{\mathcal L}^p}=\ds\left( \int_0^{t_f}\|z(t)\|_2^p\,dt\right)^{1/p}<\infty
\right\},
\]
where $\|\cdot\|_p$ is the $\ell_p$ norm in $\R^m$. $W^{1,2}([0,t_f];\R^m)$ is the Sobolev space of absolutely continuous functions, namely,
\[
{\mathcal W}^{1,2}([0,t_f];\R^m)=\left\{ z\in {\mathcal L}^2([0,t_f];\R^m) \,\,|\,\,
\dot z = dz/dt \in {\mathcal L}^2([0,t_f];\R^m)\,\right\},
\]
endowed with the norm
\[
\|z\|_{{\mathcal W}^{1,2}}=\left(\|z\|_2^2+\|\dot z\|_2^2\right)^{1/2}.
\]
\end{definition}

We will make use of the following result, which is \cite[Proposition 3.5(iv)]{Brezis}.

\begin{proposition}\label{prop:weak-strong}
Let $X$ be Banach space. Consider two sequences $\{u_n\}\subset X,\,\{w_n\}\subset X^*$ and let $u\in X,\,w\in X^*$ be such that $u_n\rightharpoonup u$ (i.e., $\{u_n\}$ converges weakly to $u$), and $w_n\to w$ (i.e., $\{w_n\}$ converges strongly to $w$). Then 
\[
\lim_{n\to \infty} \langle u_n, w_n \rangle= \langle u, w \rangle.
\]
\end{proposition}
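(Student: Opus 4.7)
The plan is to use the standard add-and-subtract trick to split $\langle u_n,w_n\rangle - \langle u,w\rangle$ into two terms, one controlled by the strong convergence of $\{w_n\}$ in $X^*$ and the other by the weak convergence of $\{u_n\}$ in $X$. Explicitly, I would write
\begin{equation*}
\langle u_n,w_n\rangle - \langle u,w\rangle = \langle u_n, w_n - w\rangle + \langle u_n - u, w\rangle,
\end{equation*}
so that by the triangle inequality and the definition of the dual norm,
\begin{equation*}
\bigl|\langle u_n,w_n\rangle - \langle u,w\rangle\bigr| \le \|u_n\|\,\|w_n - w\| + \bigl|\langle u_n - u, w\rangle\bigr|.
\end{equation*}

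Next I would argue that each of the two terms on the right tends to zero. The second term is the easier one: since $u_n \rightharpoonup u$, the very definition of weak convergence (namely, $\langle u_n, \psi\rangle \to \langle u,\psi\rangle$ for every $\psi \in X^*$) applied with $\psi=w$ gives $\langle u_n - u, w\rangle \to 0$. For the first term, the factor $\|w_n - w\|\to 0$ by hypothesis, so it suffices to show that $\{\|u_n\|\}$ is bounded. This is exactly the content of Corollary \ref{cor:CCB} in the preceding excerpt, which guarantees that every weakly convergent sequence in a Banach space is norm-bounded; this in turn rests on the Banach--Steinhaus theorem applied to $\{J u_n\}\subset X^{**}$, where $J$ is the canonical injection. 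With $\|u_n\|\le M$ for some $M>0$ and all $n$, the first term is bounded by $M\|w_n - w\| \to 0$.

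Combining the two estimates yields $\langle u_n, w_n\rangle \to \langle u,w\rangle$, which is the desired conclusion. The only nontrivial ingredient is the uniform boundedness of weakly convergent sequences; everything else is a two-line computation. Since that boundedness is already available as Corollary \ref{cor:CCB} in the preliminaries, the proof reduces to the triangle inequality above together with an $\varepsilon/2$ argument, and no further subtlety is expected.
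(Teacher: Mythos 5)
Your proof is correct and is the standard argument: the paper itself does not prove this proposition but simply quotes it as \cite[Proposition 3.5(iv)]{Brezis}, and your add-and-subtract decomposition together with the norm-boundedness of weakly convergent sequences (Corollary \ref{cor:CCB}, i.e.\ Banach--Steinhaus applied to the canonical images in $X^{**}$) is exactly the textbook proof of that result. Nothing is missing.
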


The next result follows from Lebesgue's Dominated Convergence Theorem (see, e.g., \cite[Theorem 4.2]{Brezis}).

\begin{theorem}\label{th:LDCT}
Let $f_n:[0,T]\to \R$ for every $n\in \dN$ verifying that $\lim_{n\to \infty} f_n(t)\in \R$ for every $t\in [0,T]$. Assume that there exists $M>0$ such that $|f_n(t)|\le M$ for all $t\in [0,T]$. Define $f:[0,T]\to \R$ such that $f(t):=\lim_{n\to \infty} f_n(t)$ for all $t\in [0,T]$. Then,
\begin{itemize}
    \item[(a)] $f\in {\mathcal L}^1([0,T];\R)$.
    \item[(b)] For every $t\in [0,T]$ we have $\lim_{n\to \infty} \ds\int_0^t f_n(s) ds = \ds\int_0^t f(s) ds$.
\end{itemize}
\end{theorem}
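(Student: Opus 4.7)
The plan is to apply Lebesgue's Dominated Convergence Theorem directly, using the constant function $M$ as the integrable dominating function. The finiteness of the interval $[0,T]$ makes this possible: a constant $M$ is trivially integrable on a set of finite Lebesgue measure. The only preliminary work is to confirm that the hypotheses of DCT are met and to handle part (b), where the integration is over the variable upper limit $t$ rather than the fixed interval $[0,T]$.

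For part (a), I would first observe that $f$, being a pointwise limit of (implicitly measurable) functions $f_n$, is itself measurable on $[0,T]$. Passing to the limit in the inequality $|f_n(t)|\le M$ gives $|f(t)|\le M$ for every $t\in[0,T]$, so
\[
\int_0^T |f(t)|\,dt \le M\,T < \infty,
\]
which shows $f\in {\mathcal L}^1([0,T];\R)$.

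For part (b), I fix $t\in[0,T]$ and consider the sequence $g_n := f_n\,\chi_{[0,t]}$, where $\chi_{[0,t]}$ denotes the characteristic function of $[0,t]$. Then $g_n(s)\to f(s)\,\chi_{[0,t]}(s)$ for every $s\in[0,T]$, and $|g_n(s)|\le M$ for all $s\in[0,T]$ and all $n$. Since the constant $M$ is integrable on $[0,T]$, the Dominated Convergence Theorem (\cite[Theorem 4.2]{Brezis}) applies, yielding
\[
\lim_{n\to\infty} \int_0^t f_n(s)\,ds = \lim_{n\to\infty} \int_0^T g_n(s)\,ds = \int_0^T f(s)\,\chi_{[0,t]}(s)\,ds = \int_0^t f(s)\,ds,
\]
which is exactly the claim.

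There is no real obstacle here: the whole point of the statement is to package a uniformly bounded convergence result on a finite interval into a convenient form for later use in the paper, and the argument above is essentially a one-line appeal to DCT. The only mildly delicate point is the variable upper limit in (b), which is handled trivially by the characteristic-function trick (alternatively, one may simply apply DCT on the interval $[0,t]$ directly, since the hypotheses restrict in the obvious way).
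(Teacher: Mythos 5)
Your proof is correct and matches the paper's intent exactly: the paper gives no separate proof of this theorem, merely noting that it ``follows from Lebesgue's Dominated Convergence Theorem,'' and your argument (constant dominating function $M$ on the finite interval, characteristic-function trick for the variable upper limit, and the implicit measurability of the $f_n$) is precisely the routine verification being alluded to.
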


To study the ODE systems of sections \ref{sec:DoubleIntegrator} and \ref{sec:FFRobot}, we will need the lemma below. This lemma  establishes the sequential weak continuity of a family of real-valued functions defined in  $({\mathcal L}^2([0,T];\R))^m$. 

\begin{lemma}\label{lem:varfi}
Let $m\in \mathbb{N},\, T>0$ and $u:=(u^1,\ldots,u^m)\in ({\mathcal L}^2([0,T];\R))^m$. Assume that
\begin{itemize}
    \item[(a)] $\varphi:\R \to \R$ is continuous and globally bounded (i.e., exists $L_1>0$ s.t. $|\varphi(t)|\le L_1$ for all $t\in \R$),
    \item[(b)] Fix $a,b_1,\ldots,b_m\in \R$, and define $\pi(u):[0,T]\to \R$ as
    \[
    \pi(u)(t):=a+ \sum_{j=1}^m b_j \int_{0}^t \left(\int_{0}^r u^j(s)ds\right) dr.
    \]
     \item[(c)] Fix $r,t\in [0,T]$ s.t. $r\le t$ and 
     $j\in \{1,\ldots,m\}$. Define $\eta^{\varphi}_j(\cdot,r),\rho^{\varphi}_j(\cdot,t):({\mathcal L}^2([0,T];\R))^m\to \R$ as 
     \[
\begin{array}{rcl}
   \eta^{\varphi}_j(u,r)  & :=& \ds\int_{0}^r u^j(s)\, \varphi( \pi(u)(s))ds, \\
     &&\\
    \rho^{\varphi}_j(u,t) & := &\ds\int_{0}^t \left(\int_{0}^r u^j(s) \,\varphi( \pi(u)(s))ds\right)dr= \int_{0}^t  \eta^{\varphi}_j(u,r) dr
\end{array}
     \]
\end{itemize}
 If $\{u_k\}\subset ({\mathcal L}^2([0,T];\R))^m$ and $u_k\rightharpoonup u$, then we have
   \begin{equation}
       \label{claim:eta}
       \ds\lim_{k\to \infty}  \eta^{\varphi}_j(u_k,r) =  \eta^{\varphi}_j(u,r), \, \forall \, r \in [0,T]
   \end{equation}
and
\begin{equation}
       \label{claim:rho}
      \ds  \lim_{k\to \infty}   \rho^{\varphi}_j(u_k,t) =  \rho^{\varphi}_j(u,t), \, \forall \, t \in [0,T]
   \end{equation}
Namely, the  functions $\eta^{\varphi}_j(\cdot,r)$ and $ \rho^{\varphi}_j(\cdot,t)$ are w-sequentially continuous for every $r,t\in [0,T]$.
\end{lemma}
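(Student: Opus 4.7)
The plan is to exploit the fact that weak convergence in $({\mathcal L}^2([0,T];\R))^m$ means that $\int_0^T u_k^j(s)\,g(s)\,ds \to \int_0^T u^j(s)\,g(s)\,ds$ for every $g\in {\mathcal L}^2([0,T];\R)$ and each coordinate $j$. In particular, testing against $\chi_{[0,r]}$ gives pointwise convergence of the integrals $F_k^j(r):=\int_0^r u_k^j(s)\,ds \to F^j(r):=\int_0^r u^j(s)\,ds$. Combining this with Cauchy--Schwarz yields the uniform bound $|F_k^j(r)|\le \|u_k^j\|_{{\mathcal L}^2}\sqrt{r}$, and by Corollary~\ref{cor:CCB} the sequence $\{u_k^j\}$ is norm-bounded. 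Hence Theorem~\ref{th:LDCT} applied in $r$ gives $\int_0^t F_k^j(r)\,dr \to \int_0^t F^j(r)\,dr$ for every $t\in[0,T]$, which shows $\pi(u_k)(t)\to \pi(u)(t)$ pointwise. Continuity of $\varphi$ then delivers $\varphi(\pi(u_k)(s)) \to \varphi(\pi(u)(s))$ pointwise, while (a) provides the uniform bound $|\varphi(\pi(u_k)(s))|\le L_1$.

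For \eqref{claim:eta}, fix $r\in [0,T]$ and split
\[
\eta^{\varphi}_j(u_k,r) - \eta^{\varphi}_j(u,r) = \int_0^r u_k^j(s)\bigl[\varphi(\pi(u_k)(s)) - \varphi(\pi(u)(s))\bigr]ds + \int_0^r \bigl[u_k^j(s)-u^j(s)\bigr]\varphi(\pi(u)(s))\,ds.
\]
The second term tends to $0$ directly from weak convergence of $u_k^j$ against the test function $\chi_{[0,r]}\varphi(\pi(u))\in {\mathcal L}^2([0,T];\R)$ (it belongs to ${\mathcal L}^2$ because $|\varphi|\le L_1$). The first term is bounded by $\|u_k^j\|_{{\mathcal L}^2}\cdot\|\varphi(\pi(u_k))-\varphi(\pi(u))\|_{{\mathcal L}^2([0,r])}$ via Cauchy--Schwarz; since $\{u_k^j\}$ is norm-bounded, it suffices to show the second factor tends to $0$, and this is where Theorem~\ref{th:LDCT} enters: the integrand $|\varphi(\pi(u_k))-\varphi(\pi(u))|^2$ converges pointwise to $0$ by continuity of $\varphi$ and the pointwise convergence of $\pi(u_k)$, and it is dominated by the constant $(2L_1)^2$.

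For \eqref{claim:rho}, we apply Theorem~\ref{th:LDCT} once more, this time in the outer variable $r$: by \eqref{claim:eta} the integrand $\eta^{\varphi}_j(u_k,r)$ converges pointwise to $\eta^{\varphi}_j(u,r)$ on $[0,T]$, and it is dominated, uniformly in $k$, by the constant $L_1 M\sqrt{T}$, where $M:=\sup_k \|u_k^j\|_{{\mathcal L}^2}<\infty$. This bound follows from $|\eta^{\varphi}_j(u_k,r)|\le L_1 \|u_k^j\|_{{\mathcal L}^2}\sqrt{r}$ and Corollary~\ref{cor:CCB}. Consequently, $\rho^{\varphi}_j(u_k,t)=\int_0^t \eta^{\varphi}_j(u_k,r)\,dr \to \int_0^t \eta^{\varphi}_j(u,r)\,dr = \rho^{\varphi}_j(u,t)$, which is the desired conclusion. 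The only delicate point in the argument is the first term of the splitting for $\eta$, where the weak convergence of $u_k^j$ is paired against a test function that itself depends on $u_k$; the remedy is the norm-boundedness of $\{u_k^j\}$ combined with the strong ${\mathcal L}^2$-convergence of $\varphi(\pi(u_k))$ guaranteed by dominated convergence.
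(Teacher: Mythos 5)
Your proof is correct and follows essentially the same route as the paper's: pointwise convergence of $\pi(u_k)$ via dominated convergence, strong ${\mathcal L}^2$-convergence of $\varphi(\pi(u_k))$ again via dominated convergence, and finally the pairing of the weakly convergent sequence $\{u_k^j\}$ with this strongly convergent sequence. The only cosmetic difference is that the paper invokes Proposition~\ref{prop:weak-strong} for that last pairing, whereas you unpack it inline through the add-and-subtract decomposition, Cauchy--Schwarz, and the norm-boundedness of $\{u_k^j\}$.
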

\begin{proof}
 See the proof of  Lemma~\ref{lem:varfi} in \hyperlink{prf:lem_varfi}{Appendix}.
\end{proof}

\section{Primal and Dual Problems}\label{ssec:PD}

\subsection{Theoretical Framework for duality}

Following \cite[Section 2.2]{BR2007}, we embed Problem \eqref{eq:generalPr} into a family of parametrized problems using a function that coincides with  $\varphi$ when the parameter is zero. This tool is given next.

\begin{definition}\label{def:PF}
A {\em dualizing parameterization} for \eqref{eq:generalPr} is a
function $f:U\times H\to \bar{\R}$ that verifies $f(u,0)=\varphi(u)$ for all $u\in U$.  
\end{definition}

The next definition is \cite[Definition 5.1]{BR2007} and will be our basic assumption for the dualizing parametrization.

\begin{definition}\label{level-f}
A function $f:U\times H\rightarrow \bar{\R}$ is said to be {\em weakly level-compact} if for each $\bar{z}\in H$ and $\alpha \in \R$
there exist a weakly open neighbourhood $V \subset H$ of $\bar{z}$, and a weakly compact set $B\subset U$, such that
\[ lev_{z,f}(\alpha):=\{u \in U: f(u,z)\leq \alpha \} \subset B\;\; \mbox{for all}\; z \in V.\]
\end{definition}

We next list the basic assumptions of the primal--dual framework.
\begin{itemize}
    \item[\hypertarget{H0}{(H0)}] The objective function $\varphi:
U\to \R_{\infty}$
is proper and w-lsc. 
    \item[\hypertarget{H1}{(H1)}] The function $\varphi$ has weakly compact level sets.
    \item[\hypertarget{H2}{(H2)}] The dualizing parameterization $f$ is proper (i.e., $\dom f\neq \emptyset$ and $f(u,z)>-\infty,$ \linebreak $\forall \, (u,z)\in U\times H$), w-lsc and weakly level-compact (see Definition \ref{level-f}). 
\end{itemize}

The result below will be used in our application to optimal control problems. 

\begin{theorem}[Problem (P) verifies (H0)--(H2)]
\label{th:example_verify_H}
Let $U$ be a reflexive Banach space and let $H$ be a Hilbert space. Consider a  function $\varphi:U\to \R_{+\infty}$, a  set $K\subset U$, and a function $h:U\to H$.  Consider Problem \eqref{eq:generalPr}, i.e.,
\[
({P})\qquad \min \varphi(u)\, \hbox{ s.t. } u\in K \hbox{ and }h(u)=0.
\]
Assume that $S(P)$, the solution set of Problem $({P})$, is nonempty and that the following hold.
\begin{itemize}
    \item[(a)] The objective function $\varphi$ is proper, coercive and w-lsc.
    \item[(b)] For every $z\in H$, the set $K\cap h^{-1}(z)$ is weakly closed, where
     \(
     h^{-1}(z):=\{u\in U\::\:h(u)=z\}. 
    \)
    \item[(c)] The dualizing parameterization $f:U\times H\to \R_{\infty}$ is defined as follows
\[
f(u,z):= \varphi(u)+ \delta_{K}(u) +\delta_{z}(h(u)).
\]
\end{itemize} 
Then Problem $({P})$ verifies assumptions {\rm\hyperlink{H0}{(H0)}--\hyperlink{H2}{(H2)}}.

\end{theorem}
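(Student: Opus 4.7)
My plan is to verify (H0), (H1), and the three sub-requirements of (H2) one at a time, leaning heavily on hypothesis (a) and the functional-analytic facts in Section \ref{sec:Pre}. First, (H0) is immediate from (a), which already asserts that $\varphi$ is proper and w-lsc. For (H1), the fact that $\varphi$ is w-lsc and coercive (again by (a)) lets me invoke Corollary \ref{cor:coer} to conclude that all level sets of $\varphi$ are weakly compact.

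For (H2) I would handle properness, weak level-compactness, and w-lsc separately. Properness of $f$: the assumption $S(P) \neq \emptyset$ yields some $\bar u \in K$ with $h(\bar u)=0$, so $f(\bar u, 0) = \varphi(\bar u) < +\infty$ gives $\dom f \neq \emptyset$; the bound $f > -\infty$ follows because $\varphi > -\infty$ by (a) while the two indicator terms are non-negative. For weak level-compactness I would use the inclusion
\[
lev_{z,f}(\alpha) \;=\; \{u \in K : h(u) = z,\; \varphi(u) \leq \alpha\} \;\subset\; lev_\varphi(\alpha),
\]
which is valid for every $z \in H$: the right-hand side is weakly compact by (H1) and independent of $z$, so $B := lev_\varphi(\alpha)$ together with any weakly open neighbourhood $V$ of $\bar z$ discharges Definition \ref{level-f}.

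The final and most delicate step is w-lsc of $f$. I would exploit the decomposition
\[
f(u,z) \;=\; \varphi(u) + \delta_{K \cap h^{-1}(z)}(u),
\]
so that for each fixed $z$, hypothesis (b) makes $K \cap h^{-1}(z)$ weakly closed and Fact \ref{lem:Weakly Closed}(a) then yields that $\delta_{K \cap h^{-1}(z)}$ is w-lsc; adding $\varphi$ (w-lsc by (a)) preserves this. The main obstacle I anticipate is the precise reading of ``w-lsc of $f$'' within the framework of \cite{BurachikXuemei2023}: if it is to be interpreted as joint w-lsc on $U \times H$, then slice-closedness from (b) alone is not enough, and one would need weak sequential continuity of $h$ in order to show that the graph $\{(u,z) : u \in K,\, h(u) = z\}$ is weakly closed in $U \times H$. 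This joint graph-closedness is exactly what the optimal-control applications of Sections \ref{sec:DoubleIntegrator} and \ref{sec:FFRobot} will need, and it is for this purpose that Lemma \ref{lem:varfi} has been set up.
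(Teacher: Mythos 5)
Your proof follows the paper's argument essentially step for step: (H0) directly from (a), (H1) via Corollary \ref{cor:coer}, properness of $f$ from $S(P)\neq\emptyset$, weak level-compactness via the inclusion $lev_{z,f}(\alpha)\subset lev_{\varphi}(\alpha)$ with a bound $B$ independent of $z$, and w-lsc via the decomposition $f(u,z)=\varphi(u)+\delta_{K\cap h^{-1}(z)}(u)$ combined with Fact \ref{lem:Weakly Closed}(a). The caveat you raise about joint versus slice-wise w-lsc is a fair observation, but the paper's own proof likewise establishes lower semicontinuity only in $u$ for each fixed $z$ and treats this as sufficient for (H2), so assumption (b) is exactly what is used and no weak sequential continuity of $h$ is invoked at this stage (that stronger property enters only later, via Lemma \ref{lem:varfi}, to verify hypothesis (b) itself for the concrete optimal control problems).
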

\begin{proof}
We note that \hyperlink{H0}{(H0)} holds automatically by the choice of $\varphi$ in (a). To check \hyperlink{H1}{(H1)}, we need to show that all level sets of $\varphi$ are weakly compact. By (a) $\varphi$ is coercive and w-lsc, so we can apply Corollary \ref{cor:coer} to conclude that all its level sets are weakly compact. Thus \hyperlink{H1}{(H1)} holds. We proceed to check \hyperlink{H2}{(H2)}. From the definition of indicator function we have that
\[
f(u,z)=  \varphi(u)+ \delta_{ K\cap h^{-1}(z)}(u) \ge  \varphi(u)>-\infty.\]
Moreover, $ f(u,z)$ is not identically $+\infty$ because $\emptyset\not=S(P)\subset K\cap h^{-1}(0)$. Hence $ f$ is proper. We proceed now to show that $ f$ is w-lsc. Indeed, it is enough to show that it is the sum of w-lsc functions. By (a), $ \varphi$ is w-lsc. Assumption (b) and Fact \ref{lem:Weakly Closed}(a) imply that $\delta_{ K\cap h^{-1}(z)}$ is weakly-lsc, too. Altogether, $ f$ is the sum of w-lsc functions and hence w-lsc. We proceed now to show that $f$ is weakly level compact. Fix $z_0\in H$ and $W$ any weakly open set containing $z_0$. With the notation of Definition \ref{level-f}, and assumption (c) to write for any $z\in W$
\begin{eqnarray*}
lev_{z,f}(\alpha) &=& \{u \in U: f(u,z)\leq \alpha \} \\
&=& \{u \in K: \varphi(u)\leq \alpha,\, h(u)=z \}\subset \{u \in K:\varphi(u)\leq \alpha\}\subset lev_{\varphi}(\alpha),
\end{eqnarray*}
where  $lev_{\varphi}(\alpha)$ denotes the level set of $\varphi$.  Since $\varphi$ is weakly level compact,  $lev_{\varphi}(\alpha)=:B$ is\linebreak w-compact. Since $B$ does not depend on $z$, the expression above yields
\[
\ds\bigcup_{z\in W}lev_{z,f}(\alpha)\subset B,
\]
which implies that  $f$ is weakly level compact. Therefore \hyperlink{H2}{(H2)} holds.
\end{proof}

\begin{remark}\rm
Problem~\eqref{eq:generalPr} as in Theorem \ref{th:example_verify_H} has been considered in 
\cite[Example 2.1]{BIMinexact2013}, where $h$ is assumed to have a weakly closed graph. In infinite dimensions, this assumption may be too restrictive or hard to establish. We replace it here by the less restrictive assumption (b) which is enough to ensure the w-lsc of the duality parametrization required for \hyperlink{H2}{(H2)} to hold. In later sections, we will show that (a)-(c) in Theorem \ref{th:example_verify_H} hold for our optimal control examples.
\end{remark}

We define next the augmented Lagrangian function, and the resulting problem dual to \eqref{eq:generalPr}. As mentioned in the introduction, this Lagrangian is a particular case of that analyzed in \cite[Section 3]{BurachikXuemei2023}. Namely, we take $A=0$ and $\sigma$ a suitable norm in \eqref{eq:generalLagrn}.

\begin{definition}[Augmented Lagrangian and associated dual problem]
\label{def:basic}
Let $U,H$ and $h$ be as in Problem \eqref{eq:generalPr}. Define $K_0:=K\cap\{u\in U\::\: h(u)=0\}$, i.e., $K_0$ is the constraint set for Problem \eqref{eq:generalPr}. Take $\|\cdot\|$ a norm in $H$. Assume that the
dualizing parameterization $f$ is given by
\begin{equation}\label{eq:Dual-param}
   f(u,z):= \varphi(u) + \delta_{K}(u) +\delta_z(h(u)), 
\end{equation}
and that it satisfies assumption {\rm \hyperlink{H2}{(H2)}}. We consider the following type of augmented {\em Lagrangian} $\ell:U\times \R_+\to \R_{-\infty}$ for Problem \eqref{eq:generalPr}:
\begin{equation}\label{eq:Lagrn}
\ell(u,c):=\inf_{z\in H}\{f(u,z) + c\|z\|\}= \left\{
\begin{array}{cc}
   \{\varphi(u) + c\|h(u)\|\},  & \hbox{ if } u\in K, \\
     & \\
     +\infty & \hbox{ otherwise. }
\end{array}
\right.
\end{equation}

The resulting  {\em dual function} $q: \R_+\to\R_{-\infty}$ is
\begin{equation}
\label{eq:dual_function}
q(c):=\inf_{u\in U} \ell(u,c)= \inf_{u\in K} \varphi(u) + c\|h(u)\|,
\end{equation} 
where we used  \eqref{eq:Lagrn}-\eqref{eq:Dual-param} in the last equality. The \emph{dual problem} of \eqref{eq:generalPr} is given by 
\begin{equation*}\label{eq:D_probm}
(D) \qquad {\rm maximize}\;\;\! q(c) \;\; {\rm  s.t. }\,\, c\ge 0.
\end{equation*}
Denote by $\ds M_P:=\inf_{u\in K_0}\varphi(u)$ and by $\ds M_D:=\sup_{c\ge 0}q(c)$ the optimal values of the primal and dual
problem, respectively.
The primal and dual solution sets are
denoted by $S(P)$ and $S(D)$, respectively.
\end{definition}

\begin{remark}[Finite primal value for Problem (P)] \label{rem:MP finite}  \rm
Assumption \hyperlink{H0}{(H0)} implies that $\varphi$ is proper, so Definition \ref{def:PF} yields $M_P<+\infty$.
\end{remark}

\begin{remark}[Optimal control Examples] \label{rem: OCP examples}  \rm
In our examples, we will always have that $H$, the co-domain of the function $h$, is a finite dimensional Hilbert space. Namely, we will always have that $H:=\R^m$ for some $m\in \dN$. This will allow us to take the (finite dimensional) $\ell_1$ norm as the norm in \eqref{eq:Lagrn}-\eqref{eq:Dual-param}.
\end{remark}

\subsection{Properties of the Primal--Dual Setting}\label{sec:PD properties}

We next present some basic properties of the dual function given in Definition \ref{def:basic}. The proof of the proposition below is standard, and can be found in \cite{BurachikXuemei2023}.

\begin{proposition}[Properties of the dual function]\label{prop:dualProperty} Let $q:\R_+\to \R_{-\infty}$ be the dual function defined in \eqref{eq:dual_function}. The following facts hold.
\begin{itemize}
    \item[(i)] The dual function  $q$ is concave, increasing and
weakly upper-semicontinuous (i.e., $-q$ is w-lsc).
    \item[(ii)]  If
$c_1\in S(D)$, then $c\in S(D)$ for all $c\geq c_1$.
\end{itemize}
\end{proposition}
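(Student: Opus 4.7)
The plan is to read every property directly from the representation
\[
q(c) \;=\; \inf_{u\in K}\bigl\{\varphi(u) + c\,\|h(u)\|\bigr\},
\]
which exhibits $q$ as the pointwise infimum, over $u\in K$, of the affine functions $c\mapsto \varphi(u)+c\|h(u)\|$ on $\R_+$. Each such affine function is continuous, concave, and (because $\|h(u)\|\ge 0$) nondecreasing in $c$.

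For part (i) I would dispatch the three assertions in order. Concavity of $q$ follows from the elementary fact that a pointwise infimum of a family of concave (here affine) functions is concave. Monotonicity follows similarly: the infimum of a family of nondecreasing functions is nondecreasing, so $q$ is increasing in the (weak) monotone sense used in the statement. For weak upper-semicontinuity, I would first observe that on $\R_+\subset \R$ the weak and strong topologies coincide, so it suffices to verify strong upper-semicontinuity; this holds since the infimum of any family of upper-semicontinuous (in particular continuous) functions is upper-semicontinuous.

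For part (ii) I would combine monotonicity from (i) with the definitions of $M_D$ and $S(D)$. If $c_1\in S(D)$, then $q(c_1)=M_D$. For any $c\ge c_1$, monotonicity yields $q(c)\ge q(c_1)=M_D$, while $q(c)\le M_D$ by the definition of the supremum; therefore $q(c)=M_D$ and $c\in S(D)$.

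I do not expect any real obstacle: the whole argument is the standard observation that the infimum of an affine, nondecreasing family is concave, nondecreasing, and upper-semicontinuous, after which (ii) is a one-line consequence of (i). The only point worth flagging is semantic, namely that ``increasing'' in the statement should be read as nondecreasing, which is exactly what the monotonicity of the affine maps $c\mapsto \varphi(u)+c\|h(u)\|$ (with $\|h(u)\|\ge 0$ possibly zero) provides and what is needed in (ii).
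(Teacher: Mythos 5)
Your proposal is correct and is precisely the standard argument the paper itself invokes (it omits the proof, calling it standard and citing \cite{BurachikXuemei2023}): $q$ is the pointwise infimum over $u\in K$ of the affine, nondecreasing maps $c\mapsto\varphi(u)+c\|h(u)\|$, which gives concavity, monotonicity, and upper semicontinuity at once, and (ii) then follows from monotonicity together with $q\le M_D$. Your remarks that ``increasing'' means nondecreasing and that the weak and strong topologies coincide on $\R_+$ are both apt and consistent with the paper's usage.
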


We state next, adapted to our type of Lagrangian, several properties of the primal dual scheme. We start with strong duality, proved in \cite[Theorem 3.1]{BurachikXuemei2023}. 

\begin{theorem}[Strong duality for $(P)$--$(D)$ framework]\label{th:StrongDual}
Consider the primal-dual problems \eqref{eq:generalPr} and (D). Assume that {\rm\hyperlink{H0}{(H0)}--\hyperlink{H2}{(H2)}} hold.  Suppose that there exists some
$\bar c \in \R_+$ such that $q(\bar c)>-\infty$. Then the zero-duality-gap property holds, i.e. $M_P=M_D$.
\end{theorem}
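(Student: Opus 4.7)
The plan is to establish $M_P = M_D$ by proving weak duality directly and the reverse inequality by a contradiction argument that leverages the weak level-compactness built into assumption \hyperlink{H2}{(H2)}. Weak duality $M_D \le M_P$ is immediate: for any $u \in K_0$ and any $c \ge 0$ one has $h(u) = 0$, so $\ell(u,c) = \varphi(u)$, hence $q(c) \le \varphi(u)$, and taking the infimum over $K_0$ and supremum over $c$ yields the bound. By Remark \ref{rem:MP finite} we may assume $M_P \in \R$; the case $M_P = -\infty$ is vacuous.

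For the reverse inequality I would introduce the optimal-value function
\[
p(z) := \inf_{u\in U} f(u,z) = \inf\{\varphi(u) : u\in K,\ h(u) = z\},
\]
so that $p(0) = M_P$ and, by swapping infima,
\[
q(c) = \inf_{u \in U,\, z\in H}\{f(u,z) + c\|z\|\} = \inf_{z\in H}\{p(z) + c\|z\|\}.
\]
Strong duality then reduces to $\sup_{c\ge 0}\inf_z \{p(z) + c\|z\|\} = p(0)$. Assuming for contradiction that $M_D < M_P$, I would pick $\alpha$ with $M_D < \alpha < M_P$ and, for each integer $n\ge \bar c$, select $u_n \in K$ with $\varphi(u_n) + n\|h(u_n)\| \le \alpha + 1/n$. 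Combining this with $\varphi(u_n) + \bar c \|h(u_n)\| \ge q(\bar c)$ (which uses the hypothesis $q(\bar c)>-\infty$) eliminates $\varphi(u_n)$ and yields $(n-\bar c)\|h(u_n)\| \le \alpha + 1/n - q(\bar c)$, so $h(u_n)\to 0$ strongly in $H$; at the same time $\varphi(u_n) \le \alpha + 1/n$ is bounded above.

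The crux is then to apply the weak level-compactness of $f$ at $\bar z = 0$ with threshold $\alpha+1$: there exist a weakly open neighbourhood $V\ni 0$ and a weakly compact set $B \subset U$ with $lev_{z,f}(\alpha+1) \subset B$ for all $z\in V$. Because strong convergence implies weak convergence, $h(u_n)\in V$ eventually, and $f(u_n, h(u_n)) = \varphi(u_n) \le \alpha + 1$ for large $n$, so $u_n \in B$ eventually. Eberlein--Smulian (Theorem \ref{th:ES}) then produces a subsequence with $u_{n_k} \rightharpoonup u^*$. The joint w-lsc of $f$ applied to $(u_{n_k}, h(u_{n_k})) \to (u^*, 0)$ yields
\[
f(u^*, 0) \le \liminf_{k\to\infty} f(u_{n_k}, h(u_{n_k})) = \liminf_{k\to\infty} \varphi(u_{n_k}) \le \alpha,
\]
whereas $f(u^*, 0) \ge p(0) = M_P > \alpha$, the desired contradiction.

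The main obstacle I expect is the careful bookkeeping around the two-fold use of the hypothesis $q(\bar c) > -\infty$: it is needed once to force $\|h(u_n)\|\to 0$ at the correct rate, and implicitly again to keep $\varphi(u_n)$ inside a common sublevel set on which weak level-compactness provides the weakly compact envelope $B$. Beyond that point, the argument is a standard lower-semicontinuity closure, combining the w-lsc of $f$ with the Eberlein--Smulian extraction of a weakly convergent subsequence from $B$.
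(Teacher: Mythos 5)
Your argument is correct. Note that the paper itself does not prove Theorem \ref{th:StrongDual}; it simply cites \cite[Theorem 3.1]{BurachikXuemei2023}, so there is no in-paper proof to compare against line by line. What you have written is the standard zero-duality-gap argument for augmented Lagrangians with a valley-at-zero augmenting term: weak duality is immediate, and the reverse inequality amounts to showing that the perturbation function $p(z)=\inf_u f(u,z)$ is weakly lsc at $z=0$, which you extract from \hyperlink{H2}{(H2)} by trapping a minimizing-type sequence $\{u_n\}$ inside the weakly compact envelope $B$ and passing to a weak limit. All the delicate points check out: the hypothesis $q(\bar c)>-\infty$ does force $\|h(u_n)\|\le (\alpha+1/n-q(\bar c))/(n-\bar c)\to 0$ (the subtraction is legitimate because $\varphi(u_n)\in\R$, being bounded above by $\alpha+1/n$ and bounded below since $\varphi$ maps into $\R_{+\infty}$); $f(u_n,h(u_n))=\varphi(u_n)$ because $u_n\in K$, so $u_n$ eventually lies in $lev_{h(u_n),f}(\alpha+1)\subset B$ once $h(u_n)\in V$; Eberlein--Smulian (Theorem \ref{th:ES}) justifies the sequential extraction; and the w-lsc of $f$ in \hyperlink{H2}{(H2)} is topological lsc for the weak topology of $U\times H$, which implies the sequential lsc you invoke along $(u_{n_k},h(u_{n_k}))\rightharpoonup(u^*,0)$. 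The contradiction $f(u^*,0)\le\alpha<M_P\le f(u^*,0)$ then closes the argument. The only cosmetic slip is the arrow in ``$(u_{n_k},h(u_{n_k}))\to(u^*,0)$'', which should be weak convergence, but the intent is clear.
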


\begin{definition}[Superdifferential of a concave function]
\label{rSupDiff}
Let $g:\R\to \R_{-\infty}$ be a concave function. The superdifferential
of $g$ at $c_0\in {\rm dom}(g):=\{c\in \R\::\: g(c)>-\infty\}$ is the set $\partial g(c_0)$ defined by
\[\partial g(c_0):=\{v\in \R : g(c)\leq g(c_0) + \langle v,c-c_0 \rangle , \;\;\forall c \in \R\}. \]
\end{definition}

\begin{definition}[Approximations for the primal--dual and Lagrangian]
\label{def:aprrox_Lagn}
Define the set
\begin{equation*}
\label{eq:aproxvalue}
\begin{array}{rcl}
   X(c)  &:= & \{u \in U:
\varphi(u)+ c\|h(u)\|= q(c)\}.
\end{array}
\end{equation*}
Namely, $X(c)$ is the set of minimizers of the augmented Lagrangian. 
\end{definition}

The next proposition will be used to justify the stopping criterion in the PDP algorithm.

\begin{proposition}[Search direction and stopping criterion for the PDP algorithm]\label{gradz0} \ \\
Assume that {\rm\hyperlink{H0}{(H0)}--\hyperlink{H2}{(H2)}} hold for Problem \eqref{eq:generalPr}. If $\hat{u} \in X(\hat{c})$, then the following facts hold.
\begin{itemize}
    \item[(i)] $\|h(\hat u)\| \in \partial q(\hat{c})$.
    \item[(ii)] If $h(\hat u)=0$ then $\hat u$ is an optimal primal solution, and $\hat c$ is an optimal dual solution. Conversely, assume $\hat c>0$ and that either $\hat u$ is an optimal primal solution, or $\hat c$ is an optimal dual solution. Then, we must have $h(\hat u)=0$. 
\end{itemize}
\end{proposition}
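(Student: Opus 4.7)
My plan is to treat the three assertions separately: (i), the forward direction of (ii), and the converse, in that order.

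For (i), the argument is immediate from the definition of $X(\hat c)$. Since $\hat u \in X(\hat c)$, we have $q(\hat c) = \varphi(\hat u) + \hat c \|h(\hat u)\|$. For any $c \in \R_+$, evaluating the infimum $q(c) = \inf_{u \in K}\{\varphi(u) + c\|h(u)\|\}$ at the trial point $\hat u \in K$ yields
\[
q(c) \le \varphi(\hat u) + c\|h(\hat u)\| = q(\hat c) + \|h(\hat u)\|(c - \hat c),
\]
which is precisely the supergradient inequality of Definition \ref{rSupDiff}; hence $\|h(\hat u)\| \in \partial q(\hat c)$.

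For the forward direction of (ii), suppose $h(\hat u) = 0$. Then $\hat u \in K \cap h^{-1}(0) = K_0$, and the Lagrangian collapses to $q(\hat c) = \varphi(\hat u)$, which is finite since $\varphi$ is proper. Strong duality (Theorem \ref{th:StrongDual}) then yields $M_P = M_D$, and the chain
\[
\varphi(\hat u) = q(\hat c) \le M_D = M_P \le \varphi(\hat u),
\]
with the last inequality from $\hat u \in K_0$, must consist of equalities throughout. Therefore $\hat u \in S(P)$ and $\hat c \in S(D)$.

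For the converse, I split into two cases. If $\hat u$ is primal optimal, then $\hat u \in K_0$ by definition and $h(\hat u) = 0$ is immediate. The substantive case is $\hat c$ dual optimal with $\hat c > 0$: the plan is to combine (i) with the first-order optimality of $\hat c$. Since $q$ is concave (Proposition \ref{prop:dualProperty}(i)) and $\hat c > 0$ is an interior maximiser of $q$ over $\R_+$, we get $0 \in \partial q(\hat c)$. Moreover, monotonicity of $q$ together with the upper bound $q \le M_P$ coming from weak duality forces $q$ to be constant on $[\hat c, \infty)$, so the right slope $q'_+(\hat c) = 0$. The main obstacle lies in the final step: (i) only places $\|h(\hat u)\|$ inside $\partial q(\hat c) = [q'_+(\hat c), q'_-(\hat c)] = [0, q'_-(\hat c)]$, and to pin $\|h(\hat u)\| = 0$ one must verify that the left slope $q'_-(\hat c)$ also vanishes at a dual optimum with $\hat c>0$, thereby collapsing the superdifferential to $\{0\}$ and forcing $h(\hat u) = 0$.
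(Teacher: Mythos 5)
Parts (i) and the forward implication in (ii) are correct. Your proof of (i) is, in substance, the paper's proof verbatim: evaluate the infimum defining $q(c)$ at the trial point $\hat u$ and rearrange into the supergradient inequality. The paper actually omits the forward half of (ii), deferring to \cite{BIMinexact2013}; your strong-duality chain $\varphi(\hat u)=q(\hat c)\le M_D=M_P\le\varphi(\hat u)$ is the standard way to supply it and is valid. The primal-optimal case of the converse is likewise fine.

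The dual-optimal case of the converse is where you stop, and the gap you flag is genuine --- indeed it cannot be closed under the stated hypotheses. Your framing is the right one: (i) places $\|h(\hat u)\|$ in $\partial q(\hat c)=[q_+'(\hat c),q_-'(\hat c)]$, optimality of $\hat c>0$ gives $q_+'(\hat c)=0$, and one still needs $q_-'(\hat c)=0$. The paper argues differently: by Proposition \ref{prop:dualProperty}(ii) every $\hat c+\lambda$ with $\lambda>0$ is dual optimal, and it writes $M_D=q(\hat c+\lambda)=\varphi(\hat u)+\hat c\|h(\hat u)\|+\lambda\|h(\hat u)\|$, whose right-hand side diverges as $\lambda\to\infty$ unless $h(\hat u)=0$. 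But that equality presumes $\hat u\in X(\hat c+\lambda)$; from $\hat u\in X(\hat c)$ one only gets $q(\hat c+\lambda)\le\varphi(\hat u)+(\hat c+\lambda)\|h(\hat u)\|$, which yields no contradiction. A one-dimensional example shows the obstacle is real: take $U=H=\R$, $K=[-1,1]$, $h(u)=u$ and $\varphi(u)=u+\delta_{[-1,1]}(u)$, so that (H0)--(H2) hold via Theorem \ref{th:example_verify_H}. Then $q(c)=\min\{0,\,c-1\}$, $S(D)=[1,\infty)$, $X(1)=[-1,0]$, and $\hat u=-1\in X(1)$ with $\hat c=1>0$ dual optimal, yet $h(\hat u)=-1\neq 0$; here $\partial q(1)=[0,1]$, so $q_-'(1)=1\neq 0$. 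What is true, and is the correct completion of your argument, is the claim for $\hat c$ strictly greater than the smallest dual solution: then $q\equiv M_D$ on a two-sided neighbourhood of $\hat c$, hence $\partial q(\hat c)=\{0\}$ and (i) forces $\|h(\hat u)\|=0$. So your suspicion about the left derivative pinpoints exactly the extra hypothesis that both your argument and the paper's argument require.
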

\begin{proof}
Even though the proof of (i) is standard and can be deduced from \cite[Proposition 3.1]{BIMinexact2013}, we include its proof here for convenience of the reader. To prove (i), use the definition of $q$ in \eqref{eq:dual_function} to write, for every $c\ge 0$,
\[
\begin{array}{rcl}
     q(c)&= & \inf_{u\in K} \varphi(u)+ c\|h(u)\| \le \varphi(\hat u)+ c\|h(\hat u)\| \\
        & & \\
      &&\ = \varphi(\hat u)+ \hat c\|h(\hat u)\| +(c- \hat c)\|h(\hat u)\|= q(\hat u)+(c- \hat c)\|h(\hat u)\|\,, 
      
      \end{array}
\]
where we used the fact that $\hat{u} \in X(\hat{c})$ and the definition of $q$ in the last equality. The above expression and Definition \ref{rSupDiff} yield $\|h(\hat u)\|\in \partial q(\hat{c})$, establishing (i). The first statement in part~(ii) has a proof similar to the one in \cite[Proposition 3.1]{BIMinexact2013} and hence omitted. So we prove the second statement in (ii). This statement automatically holds when $\hat u$ is an optimal primal solution because $\hat{u}\in S(P)$ and therefore it satisfies the equality constraints. We proceed now to prove the statemet when $\hat c$ is an optimal dual solution. Theorem \ref{th:StrongDual} and Remark \ref{rem:MP finite} yield
\begin{equation*}\label{eq:contradiction}
  \infty>M_P=M_D= q(\hat c)= \varphi(\hat u)+ \hat c \|h(\hat u)\|, 
\end{equation*}
where we used the assumption that $\hat{c}$ is a dual solution in the second equality, and the fact that $\hat{u} \in X(\hat{c})$ in the third one. Assume that  $h(\hat u)\not=0$. By Proposition \ref{prop:dualProperty}(ii), for every $\lambda>0$ we have that  $\hat c +\lambda\in S(D)$ and hence $q(\hat c +\lambda)=M_D$. We can write
\[
 \infty>M_P=M_D= q(\hat c+\lambda)= \varphi(\hat u)+ \hat c \|h(\hat u)\| + \lambda \|h(\hat u)\|.
\]
whose right-hand side  tends to infinity  for $\lambda\to+\infty$. This contradiction implies that $h(\hat u)=0$.
\end{proof}

\subsection{The Primal--Dual Penalty (PDP) algorithm}\label{sec:algorithmPDP}
Our Lagrangian is given by equation \eqref{eq:Lagrn} in Definition \ref{def:basic}, and it gives rise to a classical penalty method.  This motivates the name {\em primal--dual penalty (PDP) algorithm}, described below. We use in this algorithm the notation of Problem \eqref{eq:generalPr} and Definition \ref{def:basic}. By Remark \ref{rem: OCP examples}, we always have $h(u)=(h_1(u),\ldots,h_{m_1}(u))\in \R^{m_1}$ for some $m_1\in \mathbb{N}$. This allows us to consider finite dimensional norms for $h(u)$ in the definition of the PDP algorithm. Namely, we use the $\ell_1$ and the $\ell_{\infty}$ norms of $h(u)$. Our dual variable is $c_k\in \R_{+}$, while our primal variable is a function $u_k\in ({\mathcal L}^2([0,t_f];\R))^{m_2}$ for some $m_2\in \mathbb{N}$. 

\begin{algorithm}\label{alg:PDP}
\begin{quote}\rm
  
{\bf Primal--Dual Penalty (PDP) Algorithm} \\[2mm]
Let $\alpha,\varepsilon>0$.  Choose a sequence $\{\alpha_k\}\subset (0,\alpha)$.\\[1mm]
{\bf Step \boldmath{$0$}}. (Initialization) Choose $c_0>0$ and let $k:=0$.

\noindent {\bf Step \boldmath{$1$}}. (Solution of Subproblem and Stopping Criterion)
\begin{itemize}
    \item[(a)] Find $\ds u_k\in \argmin_{u\in U}{\ell(u,c_k)}$.
            \item[(b)] If $\| h(u_k)\|_\infty < \varepsilon$, stop.
\end{itemize}

\noindent {\bf Step \boldmath{$2$}}. (Selection of step-size and Update of Dual Variables)\\
Choose $s_k>0$ and set $\tilde{s}_k = (\alpha_k+1)s_k$. Update the penalty parameter by
\[
c_{k+1}:= c_k + \tilde{s}_k\,\|h(u_k)\|_1.
\]
Set $k:=k+1$, go to Step $1$.
  
\end{quote}
\end{algorithm}

\begin{remark}\label{increase}\rm
Proposition \ref{prop:dualProperty}(i) states that the dual function is non-increasing. Moreover, for $\{c_k\}$ generated by the PDP algorithm, strict increase of the sequence $\{q(c_k)\}$ is established in \cite[Theorem~3.1]{BIMinexact2013}). 
\end{remark}
\begin{remark}\label{rem:PDP vs Subgrad}\rm
By Proposition \ref{gradz0}(i), the search direction in Step 2 of the PDP algorithm is a classical subgradient direction for improving $q$. Proposition \ref{gradz0}(ii) justifies the stopping criterion in Step 1(b).
\end{remark}

Next we describe two choices for the step-size $s_k$ and the convergence results for each choice.

\subsection{Algorithm PDP-1}\label{ssec:alg1}
We consider in this section a step-size as in \cite[Algorithm 1]{BIMinexact2013}. Take two parameters $\beta>\eta>0$. Let $u_k$ be as Step 1(a). Consider the step-size
\begin{equation}\label{sk_DSG1}
s_k\in [\eta_k,\beta_k],
\end{equation}
where $\eta_k:=\min\{\eta,\|h(u_k)\|_2\}$ and $\beta_k:=\max\{\beta,\|h(u_k)\|_1+\|h(u_k)\|_2\}$, where $\|\cdot\|_2$ is the finite dimensional $\ell_2$ norm. With this choice of $s_k$, we denote the PDP~algorithm as PDP-1.
\begin{remark}\rm
Note that a constant step-size for all iterations is admissible.
\end{remark}

\noindent The next theorem states the convergence results for PDP-1. The proof of part (a) considers two possible cases, according to whether the dual sequence $\{c_k\}$ is bounded or not. The case of an unbounded sequence has a proof similar to \cite[Theorem 3.2]{BIMinexact2013}. The case of bounded dual sequence is slightly different, and can be found in \cite[Theorem 4.2]{BurachikXuemei2023}. The proof of part (b) follows directly from the fact that the dual sequence is strictly increasing. 

\begin{theorem}[Convergence of PDP-1]\label{primalCvg1}
Assume that $M_P=M_D$. Consider the primal sequence $\{u_k\}$ generated by {\rm PDP-1}. Take the parameter sequence $\{\alpha_k\}$ satisfying $\alpha_k\ge\bar \alpha$ for all $k$ and some $\bar \alpha>0$. The following hold.
\begin{itemize}
    \item[(a)] The primal sequence $\{u_k\}$ is bounded, all its weak accumulation points are primal solutions, and $\{q_k\}$ converges to the optimal value $M_P$.
    \item[(b)] If PDP-1 generates an infinite sequence $\{c_k\}$, then it converges if and only if it is bounded above, and in this case its supremum is a dual solution.
\end{itemize}
\end{theorem}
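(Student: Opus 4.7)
\textbf{Proof plan for Theorem \ref{primalCvg1}.}

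For part (a) I would split on whether the non-decreasing dual sequence $\{c_k\}$ (non-decreasing because $c_{k+1}-c_k=\tilde s_k\|h(u_k)\|_1\ge 0$) is bounded. In both cases, the key intermediate object is the primal sequence $\{u_k\}$ and the target is to show two things: (i) $h(u_k)\to 0$ strongly in $H$, and (ii) $\{u_k\}$ lies in a weakly compact level set of $\varphi$. Once these hold, I invoke the w-lsc of the dualizing parametrization $f(u,z)=\varphi(u)+\delta_K(u)+\delta_z(h(u))$ from Theorem \ref{th:example_verify_H}: for any weak accumulation point $\bar u$ of $\{u_k\}$, writing $u_{k_i}\rightharpoonup\bar u$ and $h(u_{k_i})\to 0$, w-lsc of $f$ yields $f(\bar u,0)\le \liminf f(u_{k_i},h(u_{k_i}))=\liminf\varphi(u_{k_i})\le M_P$, forcing $\bar u\in K$, $h(\bar u)=0$ and $\varphi(\bar u)\le M_P$, so $\bar u\in S(P)$.

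In the unbounded case, I take any $u^*\in S(P)$ and use $u_k\in\argmin\ell(\cdot,c_k)$ to deduce $\varphi(u_k)+c_k\|h(u_k)\|\le \varphi(u^*)=M_P$. Since $\varphi$ is proper and coercive (hence bounded below by some $m_0\in\R$), this gives both $\varphi(u_k)\le M_P$ (so $\{u_k\}$ is in the weakly compact level set $lev_\varphi(M_P)$) and $\|h(u_k)\|\le (M_P-m_0)/c_k\to 0$. Thus $h(u_k)\to 0$. The same inequalities also give $q(c_k)=\varphi(u_k)+c_k\|h(u_k)\|\to M_P$, using w-lsc of $\varphi$ at the accumulation point and the sandwich $\varphi(u_k)\le q(c_k)\le M_P$.

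In the bounded case, $\{c_k\}$ converges, so $\sum_k \tilde s_k\|h(u_k)\|_1<\infty$ by telescoping. Using $\tilde s_k=(\alpha_k+1)s_k\ge s_k\ge \eta_k=\min\{\eta,\|h(u_k)\|_2\}$, together with the finite-dimensional inequality $\|\cdot\|_1\ge\|\cdot\|_2$ on $H=\R^m$ (Remark \ref{rem: OCP examples}), I argue that $\|h(u_k)\|_2<\eta$ for $k$ large enough (else $\tilde s_k\|h(u_k)\|_1\ge\eta^2$ along a subsequence, contradicting summability), after which $\tilde s_k\|h(u_k)\|_1\ge\|h(u_k)\|_2^2$, hence $h(u_k)\to 0$. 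Boundedness of $\{u_k\}$ in a level set of $\varphi$ follows from $\{q(c_k)\}$ being bounded above by $M_D=M_P$ (strong duality, Theorem \ref{th:StrongDual}) and $\varphi$ being bounded below, via $\varphi(u_k)=q(c_k)-c_k\|h(u_k)\|\le M_P$. The weak accumulation argument above then delivers the conclusion, and $q(c_k)\to M_P$ follows the same way. The main delicate point here is the step-size book-keeping that converts $\sum\tilde s_k\|h(u_k)\|_1<\infty$ into $h(u_k)\to 0$; this is where the two-sided bound on $s_k$ in \eqref{sk_DSG1} is essential.

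For part (b), the sequence $\{c_k\}$ is monotone non-decreasing, so it converges if and only if it is bounded above, and in that case its limit equals its supremum $\bar c$. To show $\bar c\in S(D)$, I combine three facts: (i) $q$ is weakly upper-semicontinuous by Proposition \ref{prop:dualProperty}(i); (ii) $\{q(c_k)\}$ is monotone strictly increasing (Remark \ref{increase}) and by part~(a) converges to $M_P=M_D$; (iii) $c_k\to\bar c$ strongly in $\R$, so upper semicontinuity yields $q(\bar c)\ge\limsup_k q(c_k)=M_D$, forcing $q(\bar c)=M_D$ and $\bar c\in S(D)$.
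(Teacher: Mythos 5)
Your proposal is correct and follows essentially the same route as the paper: part (a) is split according to whether the nondecreasing dual sequence $\{c_k\}$ is bounded or unbounded (the paper defers the unbounded case to \cite[Theorem 3.2]{BIMinexact2013} and the bounded case to \cite[Theorem 4.2]{BurachikXuemei2023}), and part (b) is reduced to monotonicity of $\{c_k\}$ together with upper semicontinuity of $q$. Since the paper only sketches this argument and cites external references for the details, your write-up --- including the step-size book-keeping that turns $\sum_k \tilde{s}_k\|h(u_k)\|_1<\infty$ into $h(u_k)\to 0$ and the use of the joint w-lsc of $f$ from \hyperlink{H2}{(H2)} at weak accumulation points --- is in fact more self-contained than the paper's own treatment.
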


\subsection{Algorithm PDP-2}\label{sec:alg2}
In this section we consider the step-size proposed in \cite[Algorithm 2]{BIMinexact2013}, which ensures that the PDP algorithm converges in a finite number of steps. Take  $\beta>0$ and a sequence $\{\theta_k\} \subset\R_+$  such that
$\sum_j\theta_j=\infty$, and $\theta_k\leq\beta$ for all $k$.
Let $u_k$ be as Step 1(a). Consider the step-size
\begin{equation}\label{sk_DSG2}
s_k\in [\eta_k,\beta_k],
\end{equation}
where $\eta_k:= \theta_k/\|h(u_k)\|_1$ and $\beta_k:= \beta/\|h(u_k)\|_1$. 
With this choice of $s_k$, we denote the PDP algorithm as PDP-2. The following result from \cite[Theorem 4.5]{BurachikXuemei2023} states the convergence properties of PDP-2.

\begin{theorem}[Convergence of PDP-2]\label{primalCvg}
Assume that $M_P=M_D$. Let $\{u_k\}$ and $\{c_k\}$ be the sequences generated by {\rm PDP-2}. Suppose that the parameter sequence $\alpha_k\ge \bar \alpha>0$.  Then only one of the following cases occurs:\\
\noindent (a) There exists a $\bar{k}$
such that {\rm PDP-2} stops at iteration $\bar{k}$. As a
consequence $u_{\bar{k}}$ and $c_{\bar{k}}$ are
optimal primal and optimal dual
solutions, respectively. In this situation $\{c_k\}$ must be bounded.

\noindent (b) The dual sequence $\{c_k\}$ is unbounded. In this case, $\{q_k\}$ converges to $M_P$, and $\{u_k\}$ is bounded with all its weak accumulation points being primal solutions.
\end{theorem}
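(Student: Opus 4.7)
The plan is to mimic the structure of the proof of Theorem 4.5 in \cite{BurachikXuemei2023}, exploiting the specific form of the PDP-2 step-size. I would first establish the dichotomy (so (a) and (b) are exhaustive and mutually exclusive), and then analyze each case using the strong duality and primal--dual setting from Section~\ref{ssec:PD}. For the theoretical analysis I take the tolerance $\varepsilon=0$, so that ``stopping'' at iteration $\bar k$ literally means $h(u_{\bar k})=0$.

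\textbf{Dichotomy.} Suppose the algorithm does not terminate at any finite $\bar k$, so $\|h(u_k)\|_1>0$ for every $k$. The PDP-2 step-size rule \eqref{sk_DSG2} gives $s_k\ge\theta_k/\|h(u_k)\|_1$, hence
\[
c_{k+1}=c_k+(\alpha_k+1)\,s_k\,\|h(u_k)\|_1\ge c_k+(1+\bar\alpha)\,\theta_k.
\]
Telescoping and $\sum_k\theta_k=\infty$ force $c_k\to\infty$, which is case~(b). Thus the two cases are exhaustive; they are mutually exclusive by definition.

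\textbf{Case (a).} If PDP-2 stops at $\bar k$ then $h(u_{\bar k})=0$; since $u_{\bar k}\in X(c_{\bar k})$ by Step~1(a) and $c_{\bar k}\ge c_0>0$, Proposition~\ref{gradz0}(ii) gives $u_{\bar k}\in S(P)$ and $c_{\bar k}\in S(D)$. The sequence $\{c_k\}$ is finite, hence trivially bounded.

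\textbf{Case (b).} Assume $c_k\to\infty$. Since $\{q(c_k)\}$ is monotone increasing (Proposition~\ref{prop:dualProperty}(i), Remark~\ref{increase}), bounded above by $M_D=M_P$ (Theorem~\ref{th:StrongDual}), and $q$ is non-decreasing on $\R_+$ with $c_k\to\infty$, the limit of $q(c_k)$ equals $\sup_{c\ge 0}q(c)=M_D=M_P$. For $\{u_k\}$: from the defining identity $\varphi(u_k)+c_k\|h(u_k)\|=q(c_k)\le M_P$ we get $\varphi(u_k)\le M_P$, so $\{u_k\}\subset lev_{\varphi}(M_P)$, which is weakly compact by \hyperlink{H1}{(H1)}. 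By Corollary~\ref{cor:CCB} the sequence is bounded and by Theorem~\ref{th:ES} it has weak accumulation points. Fix any such $\bar u$, with $u_{k_j}\rightharpoonup\bar u$. Coercivity, w-lsc and properness of $\varphi$ imply, by a standard weak-compactness argument in the reflexive space $U$, that $\varphi\ge m$ on $U$ for some $m>-\infty$. Thus
\[
c_{k_j}\,\|h(u_{k_j})\|=q(c_{k_j})-\varphi(u_{k_j})\le M_P-m,
\]
and $c_{k_j}\to\infty$ forces $\|h(u_{k_j})\|\to 0$, so $(u_{k_j},h(u_{k_j}))\rightharpoonup(\bar u,0)$ in $U\times H$. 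Applying w-lsc of $f$ (assumption \hyperlink{H2}{(H2)} verified in Theorem~\ref{th:example_verify_H}) together with $f(u,z)=\varphi(u)+\delta_K(u)+\delta_z(h(u))$,
\[
f(\bar u,0)\le\liminf_{j} f(u_{k_j},h(u_{k_j}))=\liminf_{j}\varphi(u_{k_j})\le M_P<\infty,
\]
which forces $\bar u\in K$, $h(\bar u)=0$ and $\varphi(\bar u)\le M_P$. Hence $\bar u\in S(P)$.

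\textbf{Main obstacle.} The delicate point is verifying $h(\bar u)=0$ for weak accumulation points, since $h$ itself need not be weakly continuous and we only have a weak limit of $\{u_k\}$. The key is to convert this into a semicontinuity statement: strong convergence $\|h(u_{k_j})\|\to 0$ is extracted from the penalty identity via $c_k\to\infty$ and a lower bound on $\varphi$, and feasibility of the limit is then obtained by invoking w-lsc of the dualizing parametrization $f$, which packages $K$-membership and the equality constraint into indicator functions.
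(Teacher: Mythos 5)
Your proof is correct, but note that the paper itself does not actually prove this theorem: it simply quotes \cite[Theorem 4.5]{BurachikXuemei2023}, so there is no in-paper argument to compare against line by line. Your reconstruction is a sound, self-contained derivation from the results that \emph{are} stated in Section~\ref{ssec:PD}: the dichotomy via $c_{k+1}-c_k\ge(1+\bar\alpha)\theta_k$ and $\sum_k\theta_k=\infty$ is exactly the point of the type-2 step-size; case (a) follows from Proposition~\ref{gradz0}(ii) once one adopts (as you correctly flag) the convention $\varepsilon=0$, without which the exact-optimality claim in (a) would not be literally true; and in case (b) the chain ``$\varphi$ bounded below $\Rightarrow$ $c_{k_j}\|h(u_{k_j})\|\le M_P-m$ $\Rightarrow$ $h(u_{k_j})\to 0$ strongly, then w-lsc of $f$ at $(\bar u,0)$'' is precisely the mechanism that assumption \hyperlink{H2}{(H2)} is designed to enable, and it correctly handles the delicate point that $h$ need not be weakly continuous. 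Two small remarks: the bound $q(c_k)\le M_D=M_P$ is already an explicit hypothesis of the theorem, so Theorem~\ref{th:StrongDual} need not be invoked; and in the concrete optimal control instances one could alternatively get $h(\bar u)=0$ from the sequential weak continuity of $h$ established in Lemma~\ref{lem:varfi}, but your route through the w-lsc of the dualizing parametrization is the one that works at the level of generality of the abstract framework, which is the right choice here.
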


\section{A class of optimal control problems} \label{sec:problemP1}
\label{sec:generalOCP}

\subsection{Problem Formulation}\label{ssec:OCP formulation}
In later sections, we will use the PDP algorithm to solve the optimal control of the constrained double integrator and of the free-flying robot. These problems fall into the class of optimal control problems described in Problem~$(P1)$ below. 
Let the space ${\mathcal L}^2([0,t_f];\R^m)$ be as in Definition \ref{def:LP} for $p=2$. Consider also the Sobolev space ${\mathcal W}^{1,2}([0,t_f];\R^n)$ as in the same definition. We consider the following class of optimal control problems.
\[
(P1)\,
  \left\{
    \begin{array}{rlll}
      \min &\ds{\frac{1}{2}\int_0^{t_f}} & f_0(u(t))\,dt\\[4mm]
      {\rm subject \, to}&& \dot x(t) =  f(x(t),u(t)),\\[2mm]
        && x(0)=x_0\,,\, x(t_f)=x_f,\\[2mm]
       && \mbox{and} \,\, |u_i(t)|\le a_i, \, i=1,\ldots,m,
    \end{array}
  \right.
\]
where the state variable $x(t)=(x_1(t),\ldots,x_n(t))\in \R^n$, $x\in {\mathcal W}^{1,2}([0,t_f];\R^n)$,  the control variable $u(t)=(u_1(t),\ldots,u_m(t))\in \R^m$, $u\in {\mathcal L}^2([0,t_f];\R^m)$,  $a_i>0$, for $i=1,\ldots,m$. Let $f$ be linear in $u$, and
$f_0:\R^m\to \R$ and $f:\R^n\times\R^m\to \R^n$ be $C^1$ in their arguments. We assume that $a_i$ is large enough so as to ensure that Problem $(P1)$ has solutions.

\subsection{Optimality conditions}
\label{sec:optimalityCondn}

We now derive the first-order necessary conditions of optimality for the optimal control Problem~$(P1)$ by means of the maximum principle \cite[Theorem~7.2]{Hestenes1966}.  The Hamiltonian function $H:\R^n \times \R^m \times \R^n \times \R \to \R$ for Problem~$(P1)$ is defined in the usual way as
\begin{equation} \label{Ham}
H(x,u,\lambda,\lambda_0) := \lambda_0\,f_0(u) + \lambda^Tf(x,u),
\end{equation}
where $\lambda_0\in \R$ and the adjoint (or costate) variable vector
$\lambda(t) := (\lambda_1(t),\ldots,\lambda_n(t))\in \R^n$.  We further note that $\lambda\in {\mathcal W}^{1,2}([0,t_f];\R^n)$.  In~\eqref{Ham}, we have dropped the dependence on $t$ of the variables, for clarity in appearance.  Also keeping up with the tradition we define
\[
H[t] := H(x(t),u(t),\lambda(t),\lambda_0)\,.
\]
Next, we assume that the adjoint variable vector satisfies the differential equation
\begin{equation} \label{eq:adjoint}
\dot{\lambda}(t) := -H_x[t]\,,
\end{equation}
where $H_x = \partial H / \partial x$. Suppose that $(x,u)\in {\mathcal W}^{1,2}([0,t_f];\R^n) \times {\mathcal L}^2([0,t_f];\R^m)$ is an optimal pair for Problem~$(P1)$.  Then -- see~\cite{Hestenes1966} -- there exist $\lambda_0 \ge 0$ and a continuously differentiable adjoint variable vector $\lambda$ as defined in~\eqref{eq:adjoint}, such that $\lambda(t)\neq0$ for all $t\in[0,t_f]$, and that, for all $t\in[0,t_f]$,
\begin{equation}\label{eq:controlOptCdn}
u_i(t) = \Argmin_{w\in[-a_i,a_i]} H(x(t),(u_1(t),\ldots,w,\ldots,u_m(t)),\lambda(t),\lambda_0)\,,\ \ \mbox{for } i=1,\ldots m\,.
\end{equation}
Note that in~\eqref{eq:controlOptCdn}, the minimization is carried out with respect to $w$ only, which replaces $u_i(t)$ in the $i$th position of the $u(t)$ vector. Problem~($P1$) is said to be {\em normal} when $\lambda_0 >0$.  When $\lambda_0 = 0$, the maximum principle does not convey sufficient information and Problem~($P1$) and its solution are referred to as {\em abnormal}.
In~\cite[Example 2, pp. 2800--2801]{KayNoa2008} it is shown that if the control system involving linear state ODEs is controllable then $\lambda_0\neq0$, i.e., the optimal control problem is normal.  From now on, we assume that Problem~($P1$) is normal, and set $\lambda_0 = 1$, without loss of generality.

Let $f_0(u(t)):= \left( u_1^2(t) +\dots +u_m^2(t)\right)/2$.  Incorporating this special form of $f_0$ and the linearity of $f$ in $u$, \eqref{eq:controlOptCdn} reduces to
\begin{equation}\label{eq:controlOpt}
u_i(t)=\,
  \left\{
    \begin{array}{lcl}
    -\ds\sum_{j=1}^n\lambda_j(t)f_{u_i}(x(t)) & , & \mbox{if}\,\, -a_i<\ds\sum_{j=1}^n\lambda_j(t)f_{u_i}(x(t))< a_i,\\[2mm]
    \ \ \,a_i & , & \mbox{if}\,\, \ds\sum_{j=1}^n\lambda_j(t)f_{u_i}(x(t)) \leq -a_i,\\[2mm]
    -a_i & , & \mbox{if}\,\, \ds\sum_{j=1}^n\lambda_j(t)f_{u_i}(x(t)) \geq a_i,\\
    \end{array}
  \right.
\end{equation}\\
for $i=1,\ldots ,m$. Due to the linearity of $f$ in $u$, the $u_k$s, $k = 1,\ldots,m$, do not appear explicitly in $f_{u_i} = \partial f/\partial u_i$\,. For this reason, we write $f_{u_i}(x(t))$ for $f_{u_i}(x(t),u(t))$. 
For the case when the $i$th control variable $u_i(t)$ is not constrained, i.e., $a_i = \infty$, the expression in~\eqref{eq:controlOpt} reduces to
\[
u_i(t) = -\sum_{j=1}^n\lambda_j(t)f_{u_i}(x(t))\,.
\]

\subsection{Direct discretization and the settings for computations}\label{sec:discritization}

For computations, we discretize the problem by using the following notation. Suppose that the optimal control problem has $n$ states and $m$ control variables.  We consider discrete approximations of the functions over the partition $0 = t_0 < t_1 < \ldots < t_N = t_f$ such that
\[
t_{i+1} = t_i + \Delta t\,,\ \ i = 0,1,\ldots,N\,,
\]
$\Delta t := t_f/N$ and $N$ is the number of subdivisions. Let $u_{rj}$ be an approximation of $u_r(t_j)$, i.e., $u_{rj}\approx u_r(t_j)$, $r = 1,\ldots,m,\ j = 0,1,\ldots,N-1$; similarly, $x_{pi}\approx x_p(t_i)$, $p = 1,\ldots,n,\ i = 0,1,\ldots,N$.

We use the optimization modelling language AMPL~\cite{AMPL} in coding for solving our optimal control problems and get the discrete (finite-dimensional) solution. We employ the optimization software Ipopt~\cite{WacBie2006} (version 3.12.13) for solving the subproblems in the PDP algorithm, i.e. minimize the augmented Lagrangian in Step 1(a) of Algorithm \ref{alg:PDP}. We also solve the same optimal control
problems by using Ipopt on its own, in order to make comparisons with our PDP algorithm. 

The AMPL–Ipopt suite was run on a Dell desktop, with the operating system Windows 10 Enterprise (version 20H2), the processor 2.40 GHz\, Intel Core i7 and the memory 16 GB\, 2666 MHz\, SODIMM. We have used the Ipopt options \texttt{max\_iter=1000}, \texttt{tol=1e-8} and \texttt{acceptable\_tol=1e-8}.

AMPL can also be paired with other optimization software, e.g., Knitro~\cite{Knitro}, SNOPT~\cite{GilMurSau2005} and TANGO~\cite{AndBirMarSch2007,BirMar2014}, in solving the subproblems in the PDP algorithm.

In the next two sections, we will use the PDP algorithm to solve two optimal control problems and carry out numerical experiments with $s_k$ chosen as in~\eqref{sk_DSG1} and \eqref{sk_DSG2}. 

\section{Application to the Constrained Double Integrator}\label{sec:DoubleIntegrator}
In this section, we use the PDP algorithm to solve the optimal control of the constrained double integrator of a car (Problem $(P2)$ below). We proceed to describe the classical mathematical model, and then we will present a reformulation that fits the format of Theorem~\ref{th:example_verify_H}. Suppose that, at time $t$, the position of a car modelled as a point mass travelling on a flat surface is given by $y(t)$.  Then its velocity and acceleration are $\dot y(t) = (dy/dt)(t)$, and $\ddot{y}(t) = (d^2 y/dt^2)(t)$, respectively.  Suppose that the summation of all the external forces applied to the car is $u(t)$. Then  by Newton's second law of motion $\ddot y(t) = u(t)$ (assuming unit mass).
Let $x_1 := y$ and $x_2 := \dot y$. We impose a constraint on $u$ that $|u(t)|\le a$. We aim to minimize the squared ${\mathcal L}^2$-norm of the acceleration, with starting position and velocity $x_1(0) = s_0$ and $x_2(0) = v_0$, and final position and velocity $x_1(1) = s_f$ and $x_2(1) = v_f$, within one unit of time. This problem can then be mathematically modelled as follows.
\[
(P2)\,
  \left\{
    \begin{array}{rllll}
      \min &\ds{\frac{1}{2}\int_0^1 u^2(t)dt} & & & \\[4mm]
      {\rm subject \, to} &\dot x_1(t)=x_2(t), & x_1(0)=s_0, &x_1(1)=s_f, \\[1mm]
       &\dot x_2(t)=u(t), & x_2(0)=v_0, &x_2(1)=v_f, & |u(t)|\le a,\, \forall\, t\in [0,1].  \\
    \end{array}
  \right.
\]
Here the position $x_1$ and the velocity $x_2$ are the state variables. Assuming that we can change $u$ the way we like, it is nothing but the control variable of the problem. Due to the box constraint on the control variable, an analytical solution for Problem $(P2)$ is in general not possible.  

\subsection{Problem $(P2)$ verifies (H0)--(H2)}
The box constraints on $u$ can be written as
$u\in K_1$, where $K_1:=\{v\in {\mathcal L}^2([0,1],\R)\::\: |v(t)|\le a,\, \forall\, t\in [0,1]\}$. To  use Theorem~\ref{th:example_verify_H}, a first step is to show that the ODE system in  Problem $(P2)$ can be equivalently written as an equality constraint of the form $h(u)=0$, for a suitable function $h$.  We do this in the following lemma.

\begin{lemma}[ODEs as equality constraints]\label{lem:P2-h=0}
Consider the ODE system 
\[
(S1)\,
  \left\{
    \begin{array}{lll}
     \dot x_1(t)=x_2(t)\,, & x_1(0)=s_0\,, &x_1(1)=s_f\,, \\[2mm]
    \dot x_2(t)=u(t)\,, & x_2(0)=v_0\,, &x_2(1)=v_f\,.
    \end{array}
  \right.
\]
Define 
\begin{equation}\label{eq:r1_r2}
    r_1:= v_0+s_0-s_f\quad \hbox{and}\quad r_2:= v_0-v_f\,.
\end{equation}
The system $(S1)$ can be written as $h(u)=0$, where $h:{\mathcal L}^2([0,1];\R)\to \R^2$ is defined as
 \begin{equation}\label{eq:h_p1}
 h(u):=  \left[ 
 \begin{array}{l}
     h_1(u)  \\[2mm]
     h_2(u) \\
 \end{array}
 \right] := \left[ 
 \begin{array}{l}
     r_1+\ds\int_0^1\left[ \int_0^\tau u(s)ds \right]d\tau   \\[4mm]
      r_2+ \ds\int_0^1 u(\tau)d\tau\\
 \end{array}
 \right].
 \end{equation}
 \end{lemma}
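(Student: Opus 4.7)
My plan is to show the equivalence in both directions by explicit integration of the two ODEs. The argument is essentially elementary: integrate, apply the initial conditions, and then match the terminal conditions to recover exactly the components $h_1$ and $h_2$.

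First I would handle the second ODE, since it is decoupled from $x_1$. Integrating $\dot x_2 = u$ on $[0,t]$ and using $x_2(0)=v_0$ gives
\[
x_2(t) = v_0 + \int_0^t u(s)\,ds.
\]
Imposing the terminal condition $x_2(1)=v_f$ yields $\int_0^1 u(s)\,ds = v_f - v_0$, which is equivalent to $r_2 + \int_0^1 u(\tau)\,d\tau = (v_0 - v_f) + (v_f - v_0) = 0$, i.e. $h_2(u)=0$.

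Next, substituting this expression for $x_2$ into $\dot x_1 = x_2$ and integrating on $[0,t]$ with $x_1(0)=s_0$, I obtain
\[
x_1(t) = s_0 + v_0\,t + \int_0^t\!\!\int_0^\tau u(s)\,ds\,d\tau.
\]
Evaluating at $t=1$ and using $x_1(1)=s_f$ gives $\int_0^1\!\!\int_0^\tau u(s)\,ds\,d\tau = s_f - s_0 - v_0$, which rearranges to $r_1 + \int_0^1\!\!\int_0^\tau u(s)\,ds\,d\tau = 0$, i.e. $h_1(u)=0$. This proves that any solution of $(S1)$ satisfies $h(u)=0$.

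For the converse, I would observe that the formulas displayed above for $x_2$ and $x_1$ define functions in ${\mathcal W}^{1,2}([0,1];\R)$ for any $u\in{\mathcal L}^2([0,1];\R)$ (since $u$ is integrable and both integrals in $t$ are absolutely continuous). By construction these functions automatically satisfy the ODEs $\dot x_2 = u$, $\dot x_1 = x_2$ and the initial conditions $x_1(0)=s_0$, $x_2(0)=v_0$. The terminal conditions $x_1(1)=s_f$ and $x_2(1)=v_f$ are then exactly the equations $h_1(u)=0$ and $h_2(u)=0$, read backwards. There is no real obstacle here; the only care needed is to make sure the two integrations are performed in the right order (integrating $u$ first, then $x_2$) so that the resulting iterated integral matches the expression in \eqref{eq:h_p1}.
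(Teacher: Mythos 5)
Your proposal is correct and follows essentially the same route as the paper: integrate $\dot x_2=u$ and then $\dot x_1=x_2$ using the initial conditions, express $x_1(1)$ and $x_2(1)$ as affine functionals of $u$, and observe that the terminal conditions are exactly $h_1(u)=0$ and $h_2(u)=0$. Your version spells out both directions of the equivalence slightly more explicitly than the paper, but there is no substantive difference.
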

 \begin{proof}
 Using the ODE constraints we re-write Problem $(S1)$ as follows:\\[2mm] 
\[
(S1')\,
  \left\{
    \begin{array}{r}
      x_1(1)-s_f=0\,, \\[2mm]
       x_2(1)-v_f=0\,, \\
    \end{array}
  \right.
\]
\\[2mm]
where $x_i(1)$ for $i=1,2$ are defined as follows
\begin{equation} \label{eq:xP1reformulation}
\begin{array}{ll}
& \ds x_2(t):=v_0+ \int_0^t u(\tau)d\tau, \\[5mm]
& \ds x_1(t):=s_0+\int_0^t x_2(\tau)d\tau
 =s_0+\int_0^t\left[v_0+\int_0^\tau u(s)ds\right]d\tau.
\end{array}
\end{equation}
Note that the right hand sides in \eqref{eq:xP1reformulation} are affine functions of $u$. Using this definition and \eqref{eq:r1_r2}, it is direct to check that  $h(u)=0$ if and only if $x_1(1)=s_f$ and $x_2(1)=v_f.$ 
\end{proof}

\begin{theorem}[Problem $(P2)$ verifies (H0)--(H2)]
\label{th:verify_p1}
Let $h$ be defined as in \eqref{eq:h_p1}. Consider for Problem $(P2)$ the dualizing parameterization $f:{\mathcal L}^2([0,1];\R)\times \R^2\to \R_{\infty}$ defined by
\[
f(u,z):=\varphi(u)+\delta_{z}(h(u))+\delta_{K_1}(u).
\]  
where $\varphi(u):={\ds\frac{1}{2}\int_0^1 u^2(t)dt}$, and $K_1=\{u\in {\mathcal L}^2[0,1]\::\: |u(t)|\le a,\, \forall\, t\in [0,1]\}$. Then assumptions {\rm\hyperlink{H0}{(H0)}--\hyperlink{H2}{(H2)}} hold for Problem $(P1)$.
\end{theorem}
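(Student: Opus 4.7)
The strategy is to invoke Theorem \ref{th:example_verify_H}, so the plan reduces to verifying its three hypotheses (a)--(c) for Problem $(P2)$ in the setting $U = \mathcal{L}^2([0,1];\R)$ (which is a Hilbert space, hence a reflexive Banach space) and $H = \R^2$. Item (c) is automatic since the dualizing parameterization is stated in exactly the required form, so the work lies entirely in (a) and (b).

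For (a), I would argue that $\varphi(u) = \tfrac12 \|u\|_{\mathcal{L}^2}^2$ is proper, convex, coercive, and continuous on $\mathcal{L}^2([0,1];\R)$; coercivity is immediate from $\varphi(u) \to \infty$ as $\|u\|_{\mathcal{L}^2}\to\infty$, and continuity gives strong lower semicontinuity. Since $\varphi$ is convex and lsc, Theorem \ref{th:BA2} yields that $\varphi$ is w-lsc, completing (a).

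For (b), I would show separately that $K_1$ and $h^{-1}(z)$ are each weakly closed, and then take their intersection. The set $K_1 = \{u \in \mathcal{L}^2([0,1];\R) : |u(t)|\le a \text{ a.e.}\}$ is convex (pointwise convex combinations of functions bounded by $a$ remain bounded by $a$) and strongly closed (any strong limit has an a.e.-convergent subsequence preserving the bound), so Fact \ref{lem:Weakly Closed}(c) gives $K_1$ weakly closed. For $h^{-1}(z)$, the two components of $h$ in \eqref{eq:h_p1} are affine in $u$, and the linear parts are bounded functionals on $\mathcal{L}^2$ by Cauchy--Schwarz (e.g.\ $|\int_0^1 u(\tau)\,d\tau| \le \|u\|_{\mathcal{L}^2}$ and similarly for the double integral), so $h:\mathcal{L}^2\to\R^2$ is continuous and affine. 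Therefore $h^{-1}(z)$ is strongly closed and convex, hence weakly closed, again by Fact \ref{lem:Weakly Closed}(c). The intersection $K_1 \cap h^{-1}(z)$ of two weakly closed sets is then weakly closed, verifying (b).

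With (a)--(c) established, Theorem \ref{th:example_verify_H} yields that \hyperlink{H0}{(H0)}--\hyperlink{H2}{(H2)} hold, provided $S(P2)\neq\emptyset$; the latter is ensured by the coercivity of $\varphi$ together with the weak closedness of $K_1\cap h^{-1}(0)$ and the (implicit) controllability assumption that makes the feasible set nonempty, via the direct method of the calculus of variations. No single step looks genuinely difficult; the only technical point worth being careful about is the weak closedness of $h^{-1}(z)$, which is why I would explicitly record the $\mathcal{L}^2$-continuity of the integral functionals rather than quoting continuity of $h$ as obvious.
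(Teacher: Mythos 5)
Your proposal is correct, but the way you handle the key step (b) is genuinely different from the paper's. The paper proves that $K_1\cap h^{-1}(z)$ is weakly \emph{compact}: it uses the weak compactness of $K_1$ (Theorem \ref{th:BA1}), extracts a weakly convergent subsequence from any sequence in $\Gamma_j:=K_1\cap h_j^{-1}(z_j)$, and then passes to the limit in $h_j$ by invoking the sequential weak continuity of the functionals $\eta^{\varphi}_j$ and $\rho^{\varphi}_j$ from Lemma \ref{lem:varfi} (applied with $\varphi_1\equiv 1$), concluding via the Eberlein--Smulian theorem. You instead observe that for $(P2)$ the map $h$ is affine with continuous linear part (by Cauchy--Schwarz, or Fubini to rewrite the double integral as $\langle u, 1-\cdot\rangle$), so $h^{-1}(z)$ is closed and convex, and $K_1$ is closed and convex, whence both are weakly closed by Fact \ref{lem:Weakly Closed}(c) and so is their intersection. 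Your route is more elementary and avoids Lemma \ref{lem:varfi} entirely, but it exploits the affineness of $h$, which is special to the double integrator; the paper's compactness argument is the one that carries over to the free-flying robot problem $(P3)$, where $h$ involves $\sin$ and $\cos$ of integrals of $u$ and is genuinely nonlinear. In effect, the paper is running the general template in a case where it reduces to your elementary fact. Your remarks on (a), on (c) being automatic, and on the nonemptiness of $S(P2)$ (which the paper leaves implicit via the standing assumption that the bound $a$ is large enough) are all fine; only note that membership in $K_1$ should be read as an a.e.\ condition, as you do.
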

\begin{proof}
It is clear that $\varphi$ is proper and coercive. By Theorem \ref{th:BA2} it is proper, coercive and w-lsc. This fact, together with the definition of $f$, imply that assumptions (a)
and (c) from Theorem \ref{th:example_verify_H} hold. To complete the proof, we need to check that assumption (b) from Theorem \ref{th:example_verify_H} holds. Namely, we need to show that the set $K_1\cap h^{-1}(z) $ is w-closed. In fact, we will show that this set is w-compact, and this will provide the desired weak closedness by Fact \ref{lem:Weakly Closed}(b). Using Theorem \ref{th:ES}, it is enough to show that $K_1\cap h^{-1}(z) = \left(K_1\cap h_1^{-1}(z_1)\right) \cap\left( K_1\cap h_2^{-1}(z_2)\right) $ is sequentially weakly compact. Define $\Gamma_j:=K_1\cap h_j^{-1}(z_j) $ for $j=1,2$. We will show that each $\Gamma_j$ is sequentially w-compact. Fix $j\in \{1,2\}$. Take a sequence $\{u_k\}\subset \Gamma_j$. Since $\Gamma_j\subset K_1$ and $K_1$ is weakly compact, there exists a subsequence 
$\{u_{k_l}\}\subset \{u_k\}$  s.t. $u_{k_l}\rightharpoonup u\in K_1$.  
Using \eqref{eq:h_p1} and the notation of Lemma \ref{lem:varfi} 
with $T:=1$ and $m=1$, we have that
\begin{equation}\label{eq:eta-ro2}
 \begin{array}{rcll}
   h_1(u)  &=  &  r_1 +\rho_1^{\varphi_1}(u,1),
    
   &h_2(u)  = r_1 + \eta_1^{\varphi_1}(u,1),
\end{array}
\end{equation}
where $\varphi_1(s)=1$ for every $s\in [0,1]$. Because  $\{u_{k_l}\}\subset \Gamma_j$ we have that $h_j(u_{k_l})=z_j$. By Lemma \ref{lem:varfi}(c) and \eqref{eq:eta-ro2} we deduce that 
\[
z_j= \lim_{l\to \infty} h_j(u_{k_l})= h_j(u),\, j=1,2.
\]
Hence, $u\in \Gamma_j$ for $j=1,2$. This shows that both $\Gamma_1$ and $\Gamma_2$ are sequentially w-compact and thus the set $K_1\cap h^{-1}(z) = \Gamma_1\cap \Gamma_2$ is sequentially weakly compact. By Theorem \ref{th:ES}, it is w-compact and therefore w-closed. This completes the proof.
\end{proof}

\subsection{Numerical solution of Problem $(P2)$}
Using Equation \eqref{eq:adjoint}, the adjoint variables for this problem can simply be written as
\begin{equation*}\label{AdjointCarConstr}
    \lambda_1(t)=c_1 \quad \mbox{and}\quad
    \lambda_2(t)=-c_1t-c_2,
\end{equation*}
for all $t\in[0,1]$. Here $c_1$ and $c_2$ are real constants. Using \eqref{eq:controlOpt}, the optimal control for this problem is given by
\begin{equation}\label{controlCarConstr}
u(t)=\,
  \left\{
    \begin{array}{cl}
    -\lambda_2(t), & \mbox{if}\,\, -a\leq\lambda_2(t)\leq a\,,\\[1mm]
    a, & \mbox{if}\,\, \lambda_2(t)\leq -a\,,\\[1mm]
    -a, & \mbox{if}\,\, \lambda_2(t)\geq a\,,
    \end{array}
  \right.
\end{equation}
for all $t\in[0,1]$.
We take $a=2.5$, $s_0=0$, $s_f=0$, $v_0=1$, and $v_f=0$ in our numerical implementation. We discretize Problem ($P2$) over 1000 time partition points and use the PDP algorithm under both step-size of type 1 as in \eqref{sk_DSG1} and step-size of type 2 as in \eqref{sk_DSG2} to solve it. 

The feasibility tolerance $\varepsilon$ in Step 1(b) of Algorithm \ref{alg:PDP} is set at $10^{-6}$. The software package Ipopt is employed in solving the sub-problem, namely, in finding the minimizer of the Lagrangian in each iteration. We assign the parameters for the step-sizes of types 1 and 2 as follows.
\begin{itemize}
    \item step-size of type 1: $\alpha_k=1$, $\eta_k=0.1$ and $\beta_k=1$ for all $k$ and  $s_k$ is taken to be the midpoint of $[\eta_k,\beta_k]$. 
    \item step-size of type 2: $\alpha_k=1$ and $\theta_k=1$  for all $k$, $\beta=3$. Using the definition of step-size of type 2 that $\eta_k= \theta_k/\|h(u_k)\|_1$ and $\beta_k= \beta/\|h(u_k)\|_1$, we have $s_k\in [\eta_k,\beta_k]$ obtained as $s_k\in \left[\dfrac{1}{\|h(u_k)\|_1},\dfrac{3}{\|h(u_k)\|_1}\right]$. Since $c_{k+1} = c_k + (\alpha_k+1)s_k\|h(u_k)\|_1$ by the PDP algorithm, combining the range of $s_k$ and $\alpha_k$, we have that
    \[
    c_{k+1}- c_k = (\alpha_k+1)s_k\|h(u_k)\|_1 \in (\alpha_k+1)[1,\,3]=[2,\,6].
    \]
   That is to say, the increment of $ c_{k+1}- c_k$ is a quantity in the range of $[2,\,6]$.
\end{itemize}
In our experiments, PDP-2 uses step-size of type 2 with the parameters above, and has usually found the solution in four or five iterations.  The numerical results obtained by the PDP algorithm are shown in Figure~\ref{fig:carConstr}.  One should note that the first-order optimality of the control variable in Figure~\ref{fig:car_u} (as the necessary condition) is certified by the adjoint variable $\lambda_2$ in Figure~\ref{fig:car_lambda} via the expression in~\eqref{controlCarConstr}.  We include the graphs of the dual function and the dual iterates by PDP-1 and PDP-2 in Figure \ref{fig:dualConstr}.

We plot the function iterates $u_k$ in Figure~\ref{fig:uCvgEg2}, where $u_k$ are the minimizers of the dual function $q(c_k)$ (for $k=0,1,2,3,4$). 

We use different number of discretization points $N$ to compute $u(t)$ for Problem $(P2)$ by our PDP algorithm. We plot the solution of $u(t)$ obtained by the PDP algorithm with $N=20, 100$ and $\infty$ in Figure~\ref{fig:uVSnEg2} and for comparison, plot the solution of $u(t)$ by using Ipopt alone in Figure~\ref{fig:uVSnEg2_Ipopt}. When $n\le 19$, Ipopt fails to find a solution for Problem $(P2)$, while the PDP algorithm gives a solution when $N$ is as small as $10$.

\begin{figure}[t!]
$\begin{array}{lr}
  \begin{subfigure}{.49\linewidth}
  \includegraphics[width=\linewidth]{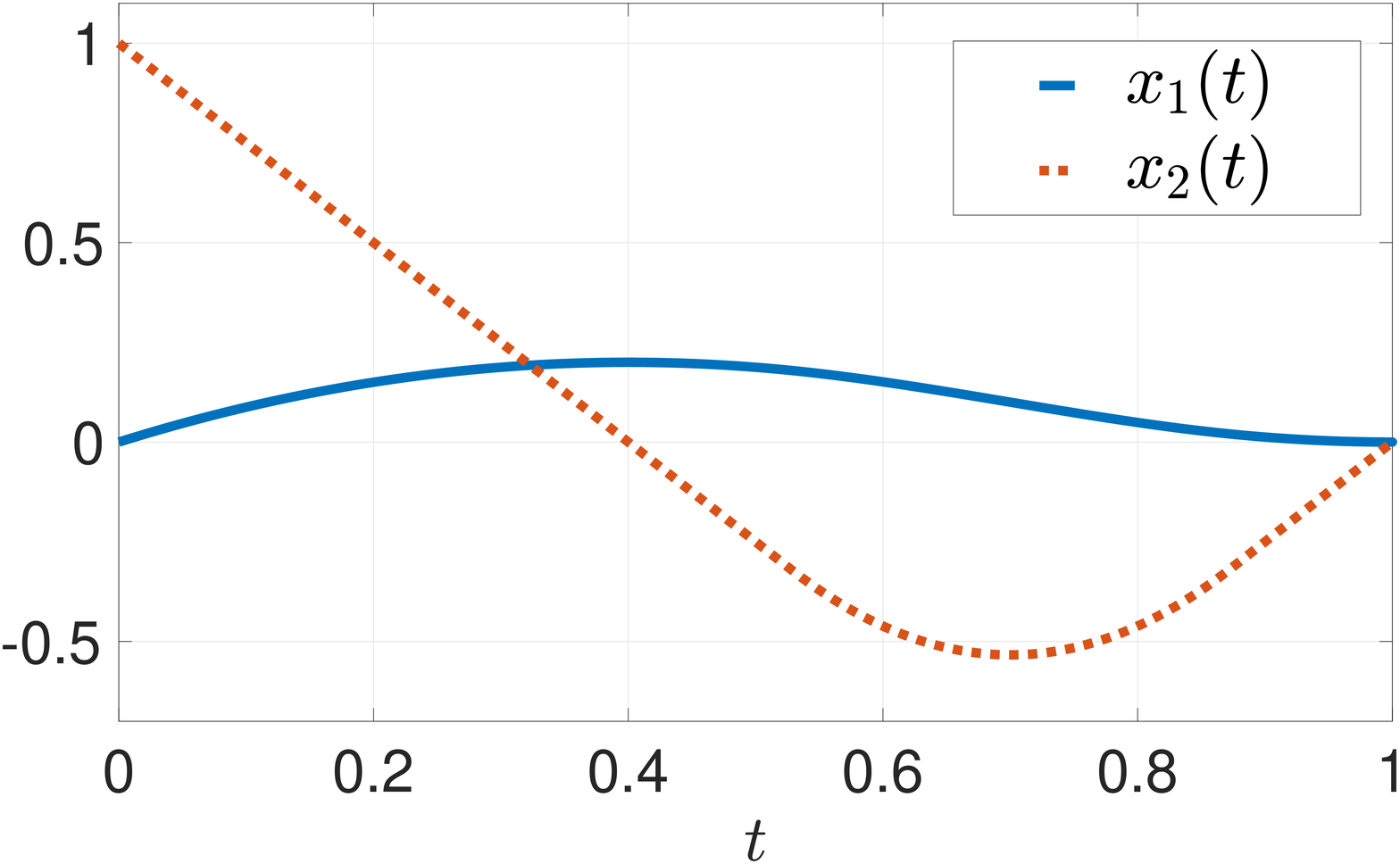}
  \caption{\small Optimal state variables.}
  \label{fig:car_x}
\end{subfigure}
&
  \begin{subfigure}{.49\linewidth}
\includegraphics[width=\linewidth]{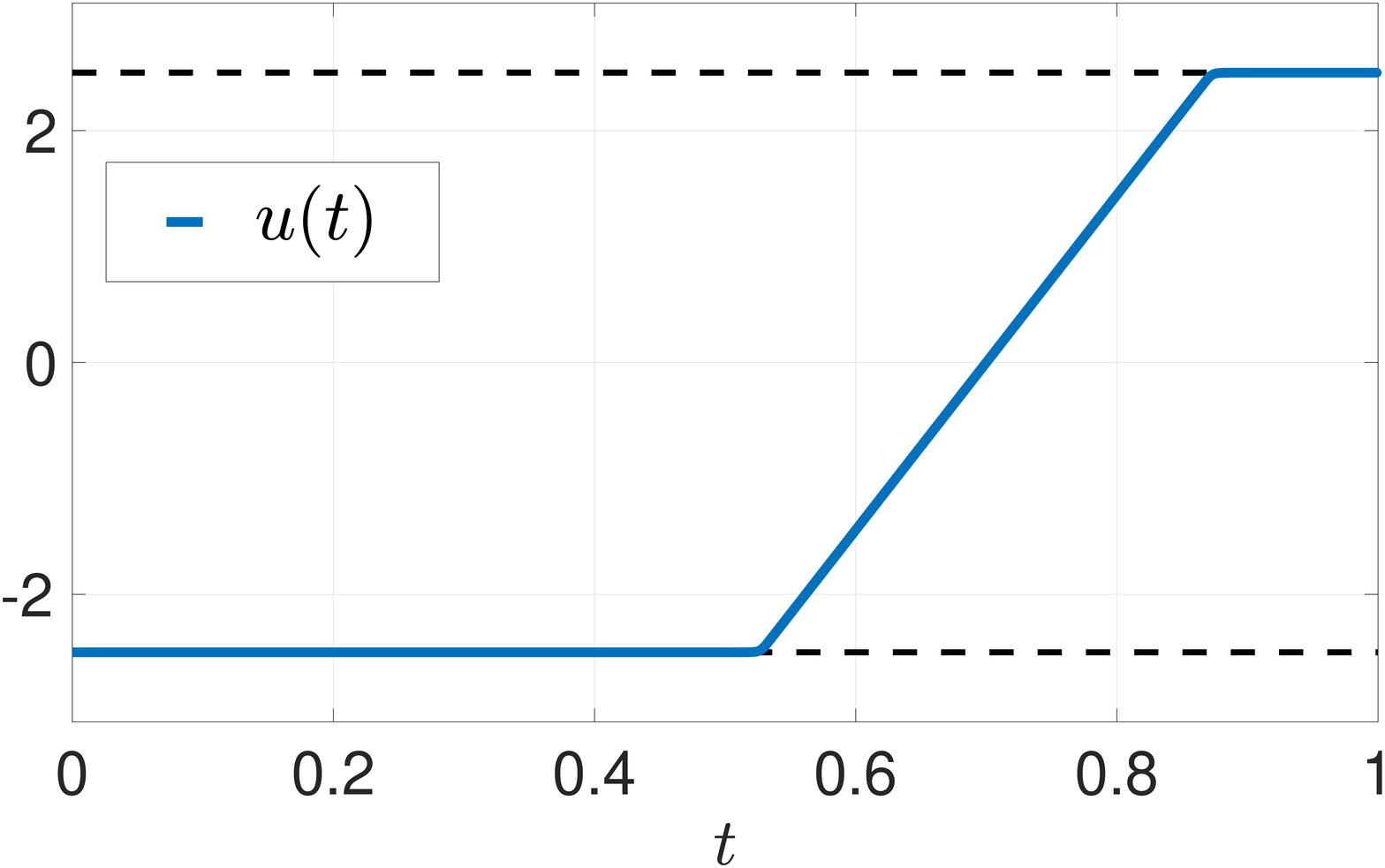}
  \caption{\small Optimal control variable.}
  \label{fig:car_u}
  \end{subfigure} \\
&
  \begin{subfigure}{.49\linewidth}
   \includegraphics[width=\linewidth]{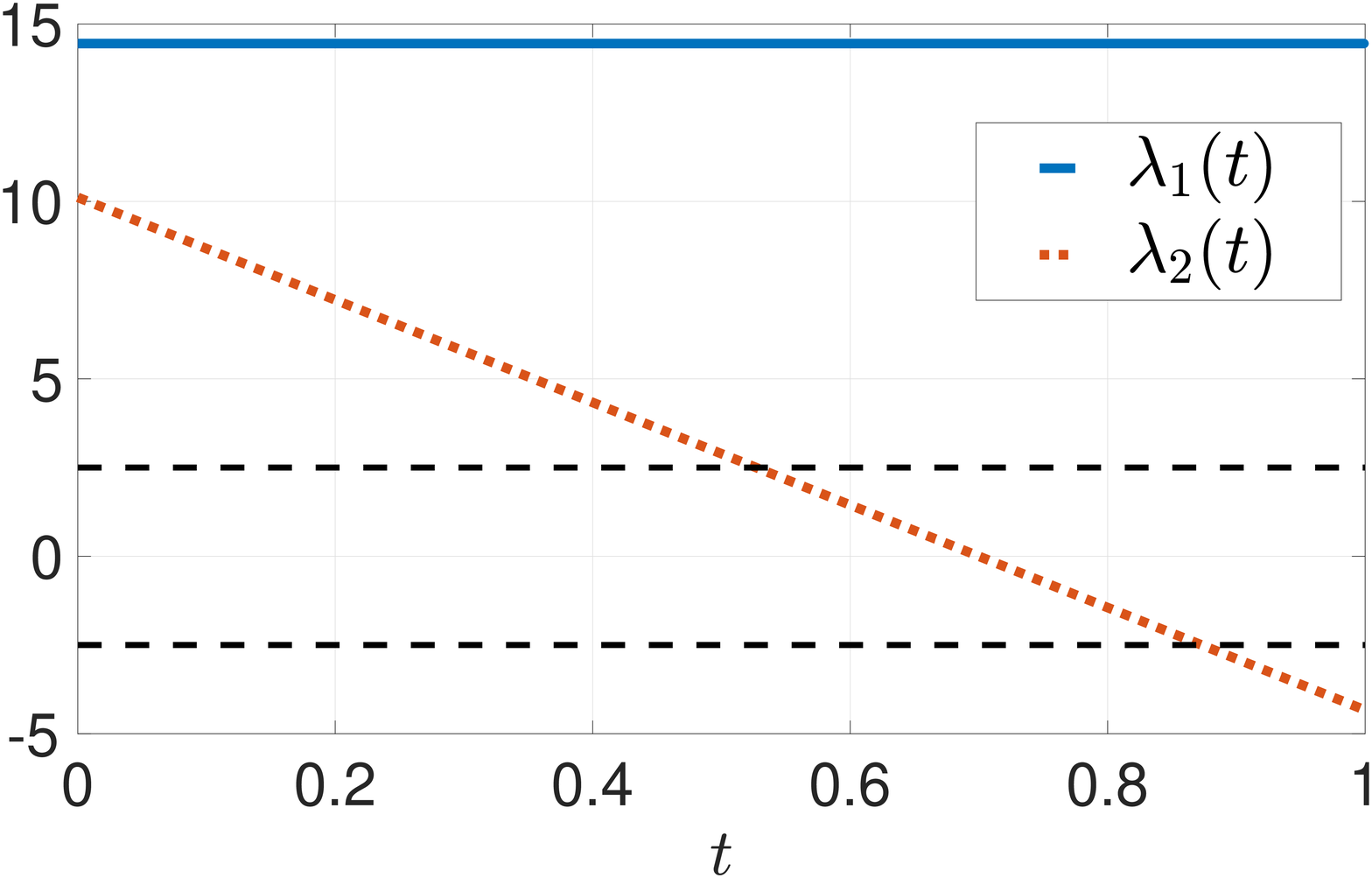}
   \caption{\small Adjoint variables.}
   \label{fig:car_lambda}
\end{subfigure}

\end{array}$
\caption{\sf Problem~$(P2)$---Solutions as obtained by the PDP algorithm.}
  \label{fig:carConstr}
  \end{figure}

 \begin{figure}[t!]
  \centering
  \begin{subfigure}{.49\linewidth}
\includegraphics[width=\linewidth]{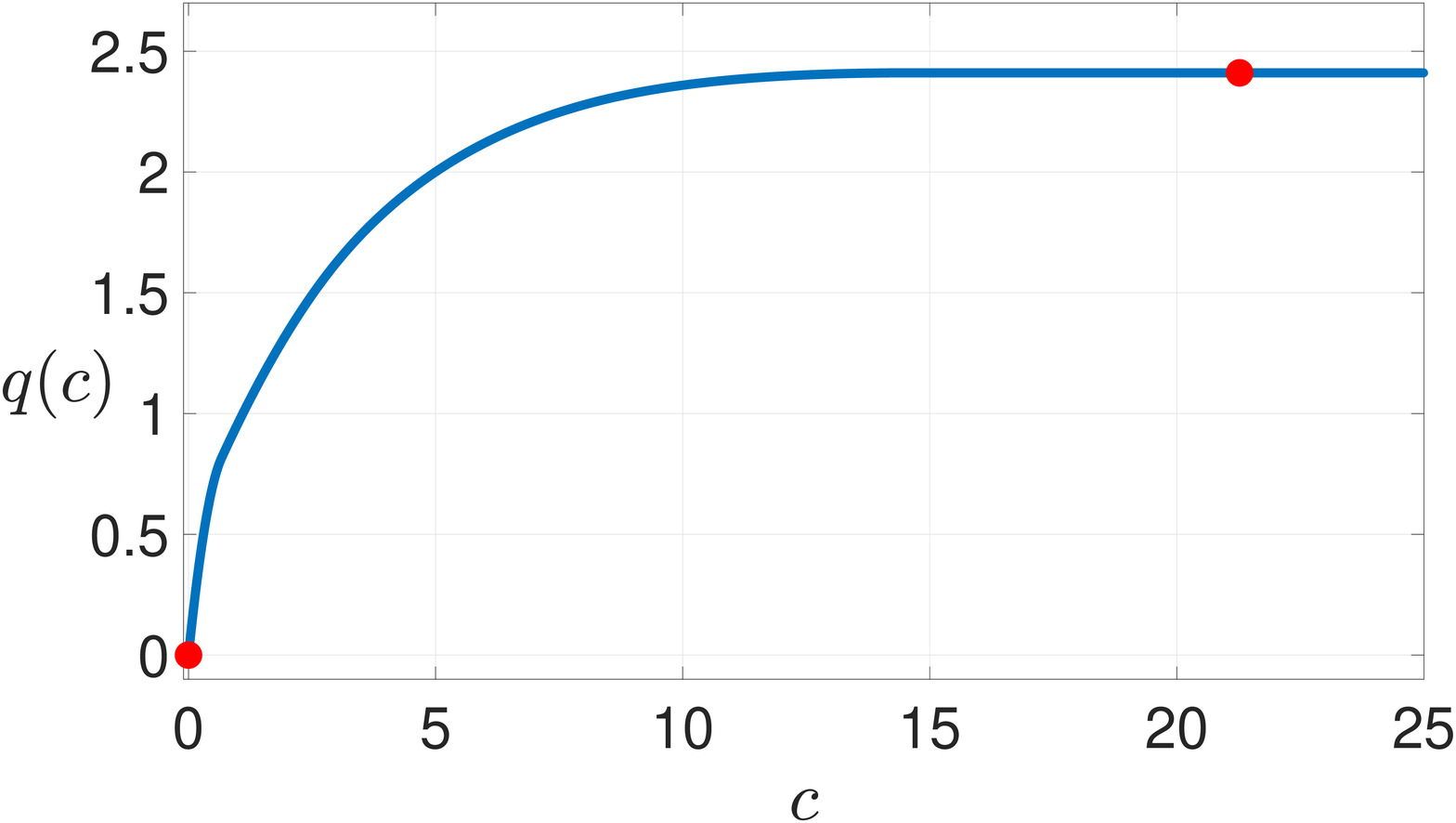}
  \caption{\small Iterations with $s_k$ in \eqref{sk_DSG1}.}
  \label{fig:dualPDP1}
\end{subfigure}
\begin{subfigure}{.49\linewidth}
\includegraphics[width=\linewidth]{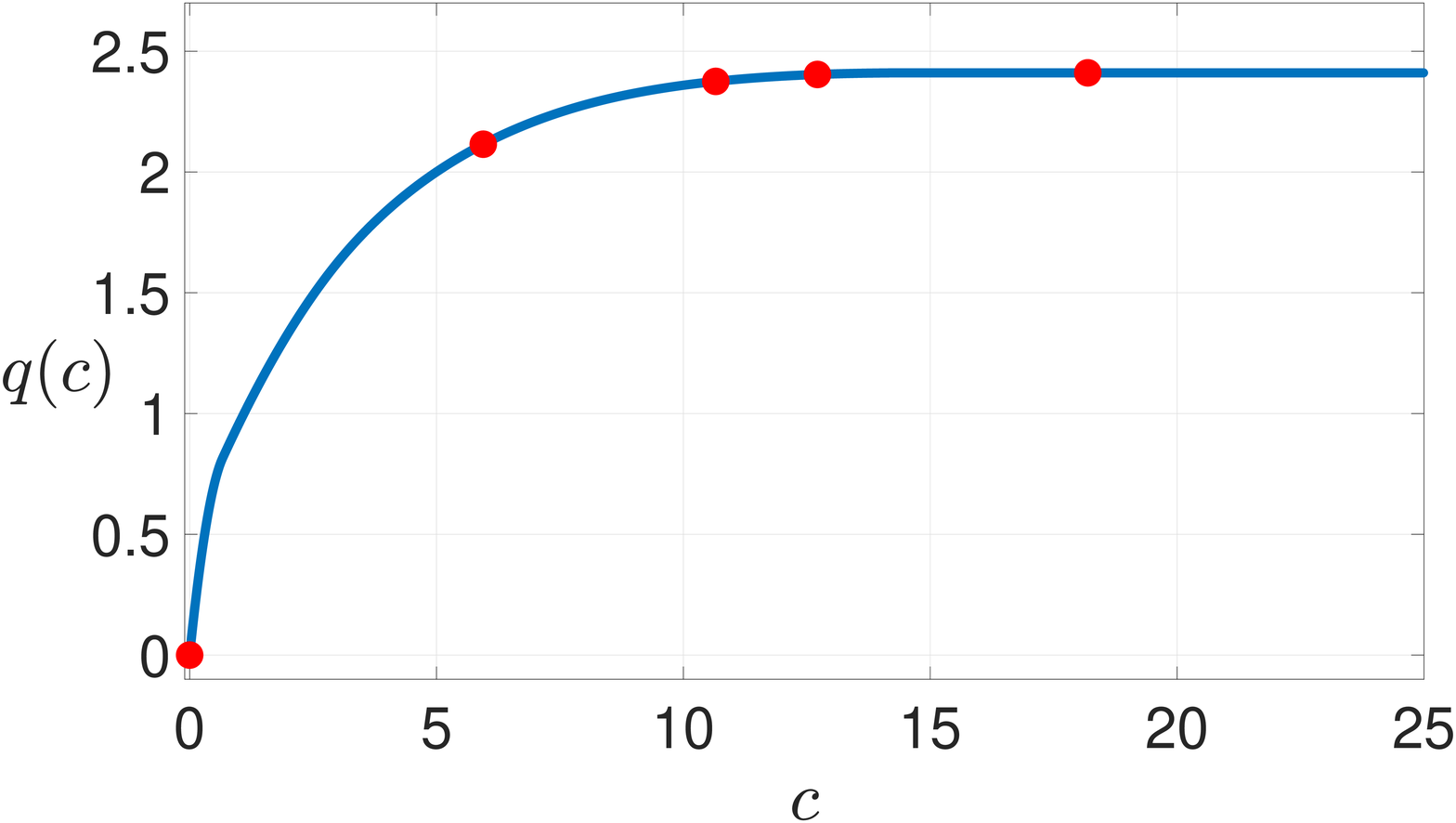}
  \caption{\small Iterations with $s_k$ in \eqref{sk_DSG2}.}
  \label{fig:dualPDP2}
  \end{subfigure}
  \caption{\sf Problem $(P2)$---The dual function updates (shown by red dots on the blue curve representing the graph of the dual function) in each iteration of the PDP algorithm using step-sizes of type 1 and 2.}
  \label{fig:dualConstr}
\end{figure}

\begin{figure}[t!]
  \centering
    \includegraphics[width=0.7\textwidth]{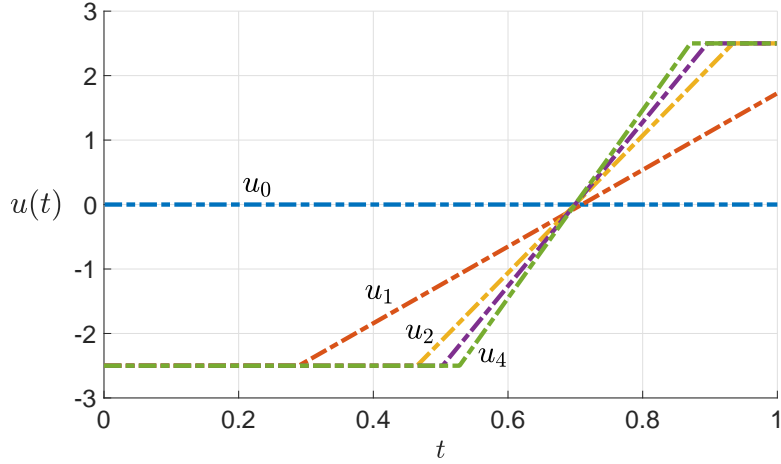}
    \caption{\sf Problem $(P2)$---The iterations ($u_0$, $u_1$, $u_2$, $u_3$ and $u_4$) of $u(\cdot)$ under the PDP algorithm with step-size of type 2. The iterate $u_3$ is (indicated in purple but) not labelled for clarity.}
  \label{fig:uCvgEg2}
\end{figure}

 \begin{figure}[t!]
  \centering
  \begin{subfigure}{.49\linewidth}
\includegraphics[width=\linewidth]{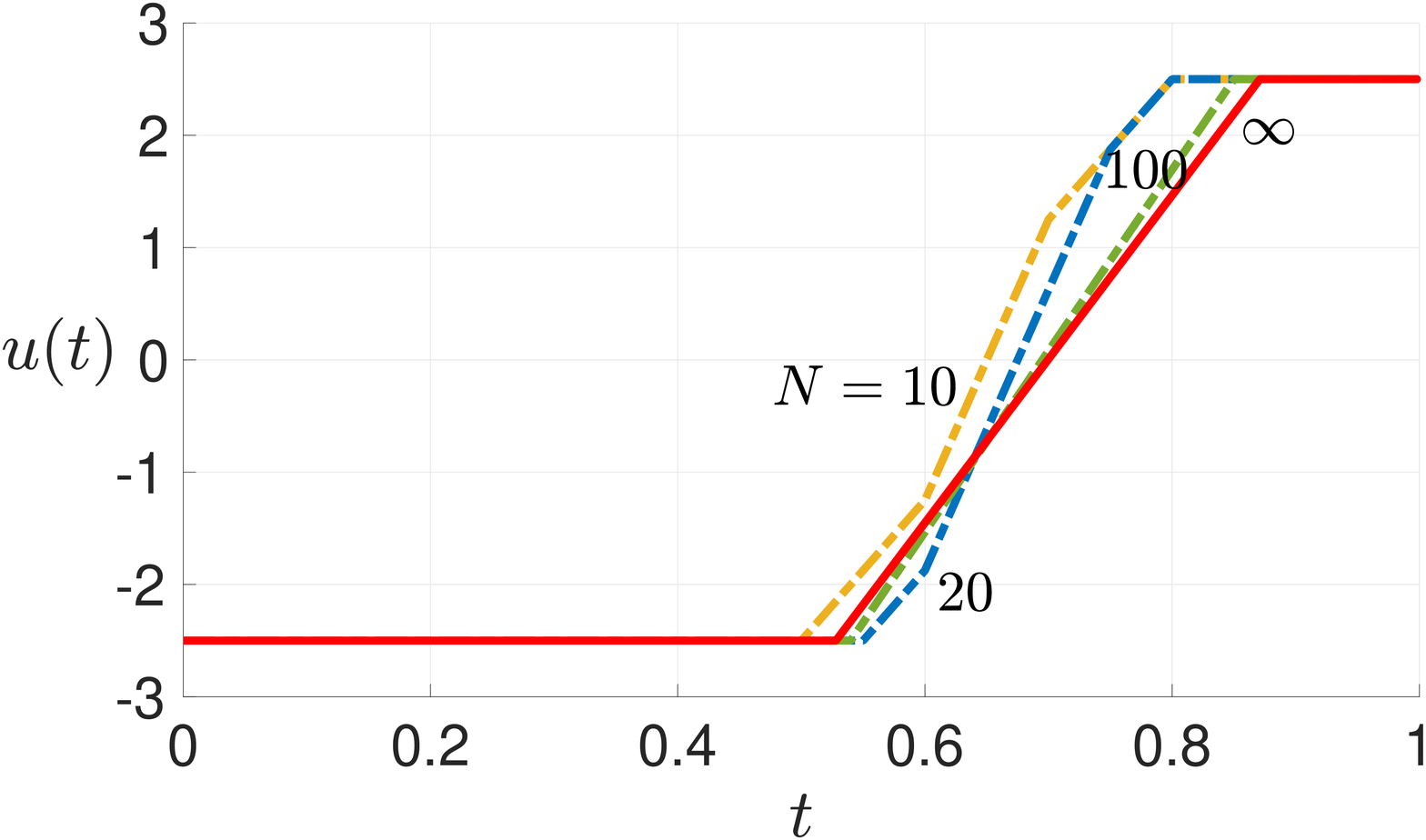}
  \caption{\small $u(\cdot)$ obtained by the PDP algorithm.}
  \label{fig:uVSnEg2}
\end{subfigure}
\begin{subfigure}{.49\linewidth}
\includegraphics[width=\linewidth]{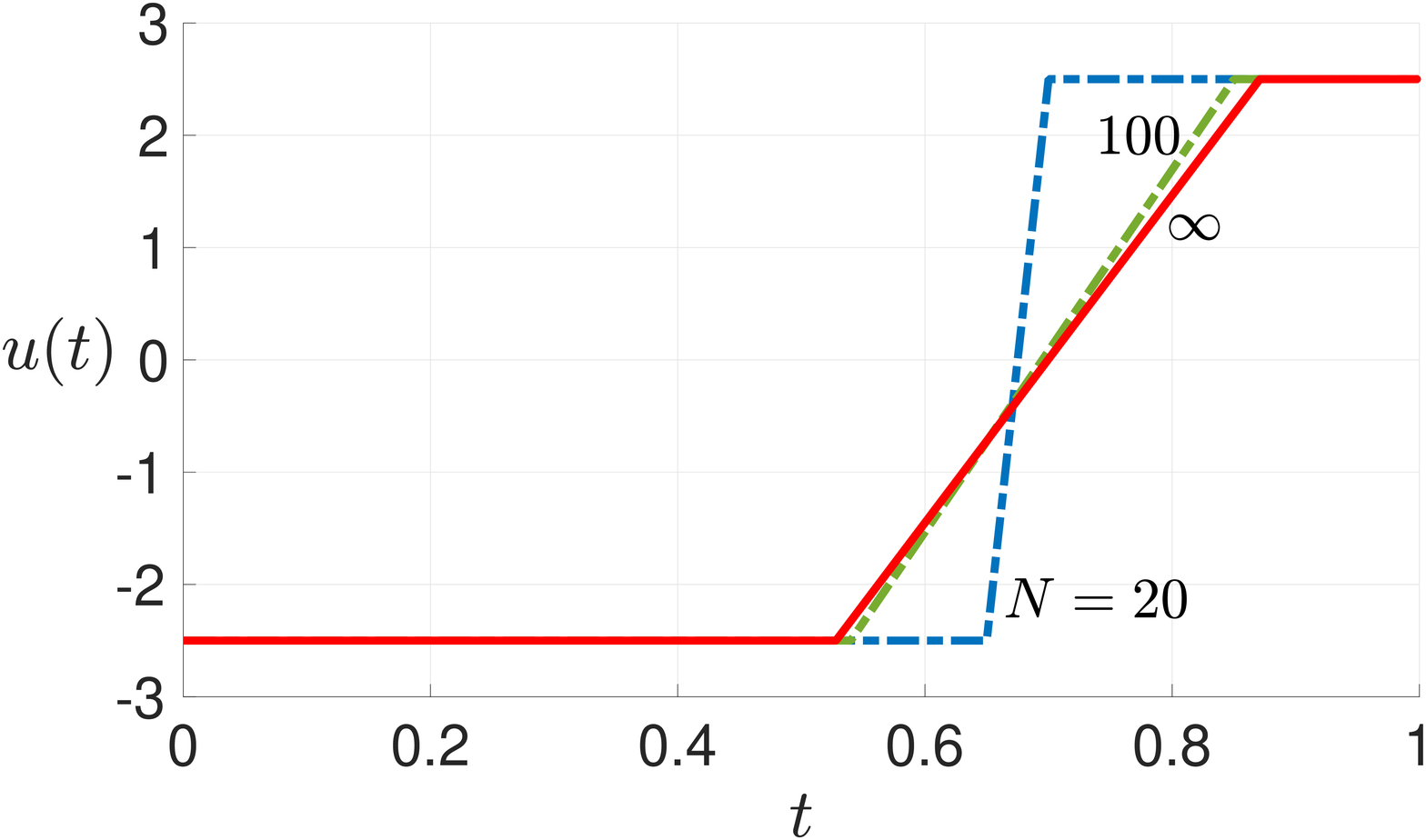}
  \caption{\small $u(\cdot)$ obtained by Ipopt alone.}
  \label{fig:uVSnEg2_Ipopt}
  \end{subfigure}
  \caption{\sf Problem $(P2)$---Solution $u(\cdot)$ obtained by the PDP algorithm with step-size of type 2 and by Ipopt alone, under different number of discretization points ($N=10, 20$ and $100$; the solution when $N\to\infty$ is represented in red solid-line curve by the solution obtained with $N=1000$).}
  \label{fig:Eg2_uVSn}
\end{figure}

We have also used Ipopt on its own to solve the discretization of Problem~$(P2)$ (not as a part of the PDP algorithm). These two methods achieve both $100\%$ success rate in finding the solution, and Ipopt alone uses less CPU time than the PDP algorithm. We note that a much more efficient method using projection techniques is provided by Bauschke, Burachik and Kaya in~\cite{BBY} for a class of convex optimal control problems, including Problem~$(P2)$.  Therefore, neither Ipopt nor the PDP algorithm should be the method of choice for Problem~$(P2)$.

\section{Application to the Free Flying Robot Problem}
\label{sec:FFRobot}

The PDP algorithm can also solve non-convex problems, including the challenging Problem~$(P3)$ below, involving the so-called free-flying robot (FFR).  Problem~$(P3)$ is highly non-convex and thus cannot be solved by existing projection methods.  This warrants implementing our PDP algorithm for solving it and comparing it with the approach using Ipopt on its own.

\subsection{The mathematical model for Problem $(P3)$}
The mathematical model for this problem is as follows. The aim is to minimize the fuel consumption of a robot which is moving at a constant height from an initial to a final equilibrium position. The robot can be controlled by the thrust of two jets. We use $x_1$ and $x_2$ for the coordinates of the FFR, $x_4$ and $x_5$ for the corresponding velocities, $x_3$ for the direction of thrust, $x_6$ for the angular velocity, and $u_1$ and $u_2$ for the thrusts of the two jets. The model was formulated initially in \cite{Sakawa1999} and further studied in ~\cite{BY2013,BYir,VosMau2006}.  We use the control constraints as in ~\cite{BY2013,BYir,VosMau2006}. Figures~\ref{fig:ffRobot} and \ref{fig:trjectory} respectively show a diagrammatic illustration of the model and the solution trajectory.

\begin{figure}[t]
  \begin{subfigure}{.45\linewidth}
\includegraphics[width=1\linewidth]{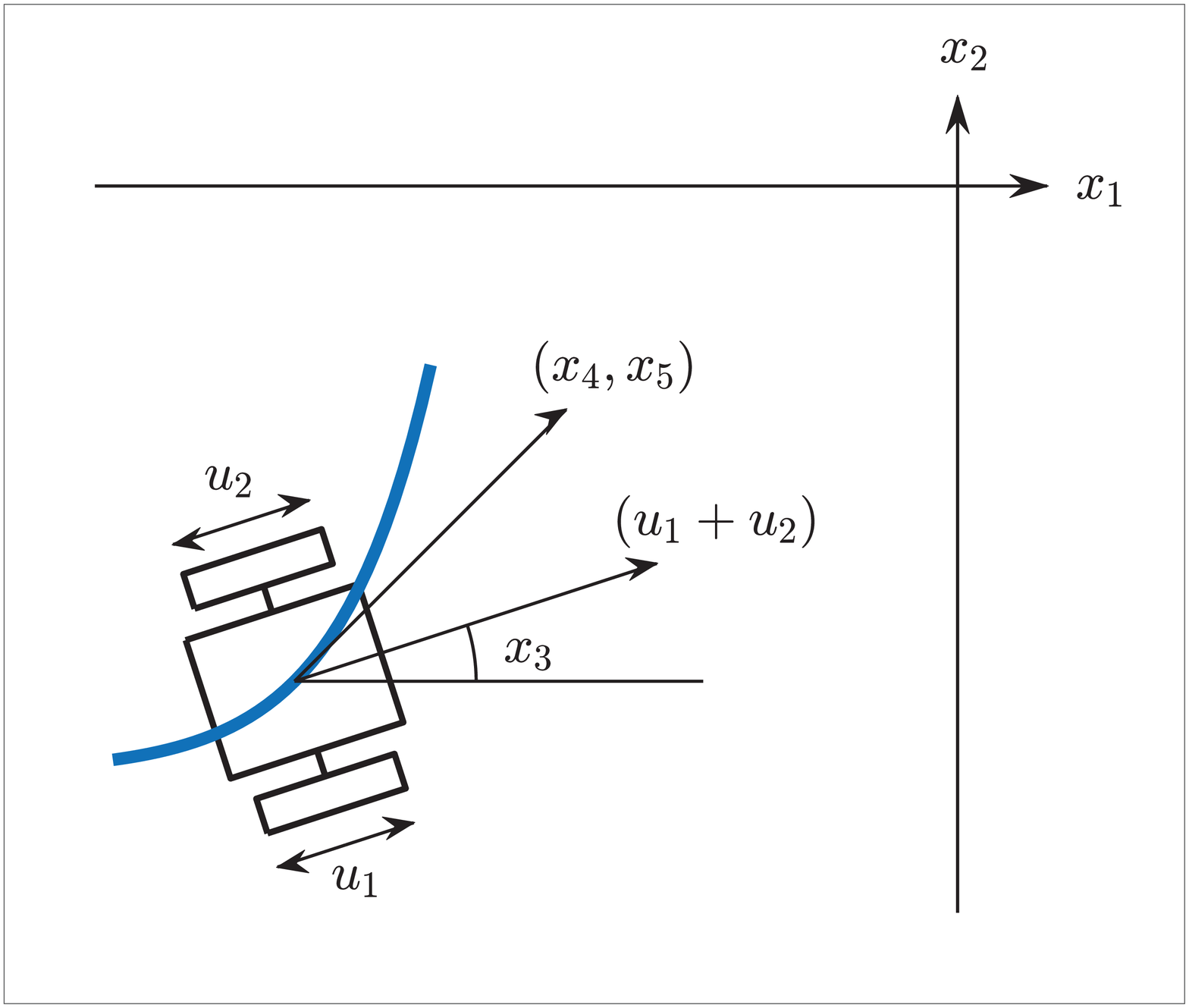} \\[4mm]
  \caption{\small The free-flying robot.}
 \label{fig:ffRobot}
\end{subfigure}
\hspace*{10mm}
\begin{subfigure}{.45\linewidth}
\includegraphics[width=1\linewidth]{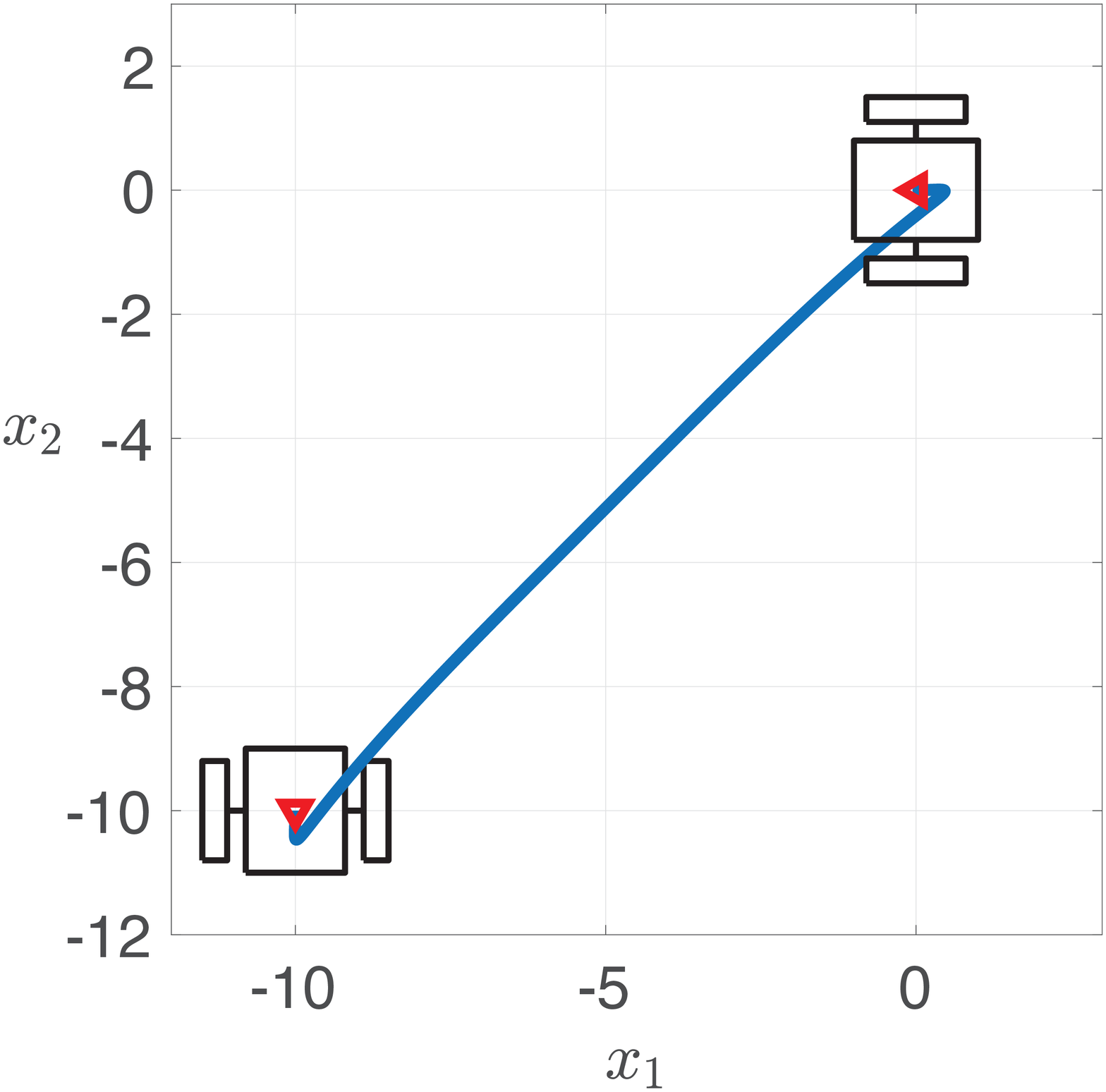}
  \caption{\small Optimal trajectory of the free-flying robot.}
  \label{fig:trjectory}
  \end{subfigure}
\caption{\sf Configuration and solution trajectory of the free flying robot.}  
\label{fig:robots}
\end{figure}

The model, as studied in \cite{BY2013,BYir,VosMau2006}, is as follows.
\[
(P3)\,
  \left\{
    \begin{array}{rl}
      \min & \ds\int_0^{12} \left(u_1^2(t)+u_2^2(t)\right)dt  \\[4mm]
      {\rm s.t. } &\ \dot x_1(t)=x_4(t)\,, \\[1mm]
      &\ \dot x_2(t)=x_5(t)\,, \\[1mm]
      &\ \dot x_3(t)=x_6(t)\,, \\[1mm]
      &\ \dot x_4(t)=\left(u_1(t)+u_2(t)\right)\cos x_3(t)\,, \\[1mm]
      &\ \dot x_5(t)=\left(u_1(t)+u_2(t)\right)\sin x_3(t)\,, \\[1mm]
      &\ \dot x_6(t)= 0.2\left(u_1(t)-u_2(t)\right), \\[1mm]
      &\ x(0)=(-10,-10,\pi/2,0,0,0),\ \ x(12)=(0,0,0,0,0,0)\,, \\[1mm]
      &\ |u_1(t)|\le 0.8\,, \quad |u_2(t)|\le 0.4\,.
    \end{array}
  \right.
\]

\newpage
\subsection{Formulation of the Free-flying robot problem}

To be able to apply the PDP algorithm for solving the FFR problem, we need to formulate $(P3)$ in the format \eqref{eq:generalPr} of Theorem \ref{th:example_verify_H}. With the notation of that theorem, take $U:={\mathcal L}^2([0,1];\R)\times {\mathcal L}^2([0,1];\R)$ and $H:=\R^6$. The box constraints on $u$ can be expressed using the set $K_2:=\{u\in {\mathcal L}^2([0,1];\R)\times {\mathcal L}^2([0,1];\R)\::\: |u_1(t)|\le 0.8,\, |u_2(t)|\le 0.4, \,\forall\, t\in [0,1]\}$. As in Section \ref{sec:DoubleIntegrator}, our first step is to show that the ODE system appearing in the constraints of $(P3)$ can be equivalently reformulated as $h(u)=0$ for a suitable function $h:U\to \R^6$. This fact is established in the next lemma. The idea, which is elementary albeit laborious, is to integrate the ODE system.

\begin{lemma}[ODEs as equality constraints]\label{lem:h for P3}
 There exists a function of $h:{\mathcal L}^2([0,1];\R)\times {\mathcal L}^2([0,1];\R)\to \R^6$ such that the ODE system in $(P3)$ can be written as $h(u)=0$.   
\end{lemma}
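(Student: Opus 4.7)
The plan is to integrate the ODE system in $(P3)$ sequentially, exploiting its triangular structure: $\dot x_6$ depends only on $u$; $\dot x_3$ then depends only on $x_6$; $\dot x_4$ and $\dot x_5$ depend on $x_3$ and $u$; and finally $\dot x_1,\dot x_2$ depend on $x_4,x_5$. Together with the initial conditions, this lets me express every state $x_i$ as an explicit functional of $u=(u_1,u_2)$. The six terminal conditions at $t=12$ then become six scalar equations in $u$, which I would collect as the components of $h:U\to\R^6$.

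Concretely, I would first define
\[
x_6(u)(t):= 0.2\int_0^t\bigl(u_1(s)-u_2(s)\bigr)\,ds,\qquad x_3(u)(t):= \tfrac{\pi}{2} + \int_0^t x_6(u)(r)\,dr,
\]
which by construction satisfy $\dot x_6=0.2(u_1-u_2)$, $\dot x_3=x_6$ and the prescribed initial values. Observe that $x_3(u)(t)$ has exactly the form of the auxiliary function $\pi(\cdot)$ appearing in Lemma~\ref{lem:varfi}, with $a=\pi/2$, $m=2$ and $(b_1,b_2)=(0.2,-0.2)$; this is not accidental and will be useful when later verifying w-closedness of level sets. Next I would set
\[
x_4(u)(t):= \int_0^t (u_1+u_2)(s)\cos x_3(u)(s)\,ds,\qquad x_5(u)(t):= \int_0^t (u_1+u_2)(s)\sin x_3(u)(s)\,ds,
\]
and finally
\[
x_1(u)(t):= -10+\int_0^t x_4(u)(r)\,dr,\qquad x_2(u)(t):= -10+\int_0^t x_5(u)(r)\,dr.
\]
Each $x_i(u)$ then satisfies the corresponding ODE together with its initial condition, so the six terminal conditions of $(P3)$ are equivalent to $h(u)=0$, where
\[
h(u):=\bigl(x_1(u)(12),\,x_2(u)(12),\,x_3(u)(12),\,x_4(u)(12),\,x_5(u)(12),\,x_6(u)(12)\bigr).
\]

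Writing the six components out explicitly, each $h_i(u)$ is an affine function of the values $\eta_j^{\varphi}(u,12)$ and $\rho_j^{\varphi}(u,12)$ from Lemma~\ref{lem:varfi} for the three choices $\varphi\in\{\cos,\sin,\mathbf{1}\}$ (with $\mathbf{1}$ the constant function equal to $1$). This matching is not needed for the present lemma, but it is the reason for phrasing $\pi(\cdot)$ as done above: it makes the w-closedness argument of Theorem~\ref{th:verify_p1} carry over essentially verbatim in the next step of the analysis of $(P3)$.

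The only real obstacle is bookkeeping: the expressions for $h_1$ and $h_2$ are triple iterated integrals involving $\cos$ and $\sin$ of a double integral of $u$, so one must keep the order of integration and the nested dependencies clear enough to make the match with Lemma~\ref{lem:varfi} transparent. There is no analytic or topological subtlety; the construction is one pass of Picard-style integration followed by evaluation at $t=12$.
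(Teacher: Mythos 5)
Your proposal is correct and follows essentially the same route as the paper: both exploit the triangular structure of the ODE system, integrate sequentially from $x_6$ and $x_3$ down to $x_1,x_2$ to express each state as an explicit functional of $u$, and define $h$ as the vector of terminal values at $t=12$. The remark matching the components of $h$ to the functions $\eta_j^{\varphi}$ and $\rho_j^{\varphi}$ of Lemma~\ref{lem:varfi} is likewise exactly how the paper proceeds in the subsequent weak-compactness argument (Lemma~\ref{lem:cond b for P3}).
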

\begin{proof}
    See the proof of  Lemma~\ref{lem:h for P3} in \hyperlink{prf:h for P3}{Appendix}.
\end{proof}

Recall our notation $K_2:=\{(u_1,u_2)\in {\mathcal L}^2([0,1];\R)\times {\mathcal L}^2([0,1];\R)\::\: |u_1(t)|\le 0.8,\, |u_2(t)|\le 0.4, \,\forall\, t\in [0,1]\}$. To verify Assumption (b) in Theorem \ref{th:example_verify_H}, we need to show that $K_2\cap h^{-1}(z)$ is w-closed, for $h$ as in Lemma \ref{lem:h for P3}. We establish this in the next lemma.

\begin{lemma}
    \label{lem:cond b for P3}
Consider the ODE system and the corresponding boundary conditions given for Problem $(P3)$ and let $h:{\mathcal L}^2([0,1];\R)\times {\mathcal L}^2([0,1];\R)\to \R^6$ be as in  Lemma \ref{lem:h for P3}. Then the set
$K_2\cap h^{-1}(z)$ is w-compact and hence w-closed. 
\end{lemma}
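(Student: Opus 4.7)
The plan is to follow the same template used in the proof of Theorem~\ref{th:verify_p1}. First I would reduce weak closedness of $K_2\cap h^{-1}(z)$ to weak sequential compactness via the Eberlein--Smulian theorem (Theorem~\ref{th:ES}) together with Fact~\ref{lem:Weakly Closed}(b). Since $K_2$ is a product of two bounded, closed, convex subsets of ${\mathcal L}^2([0,1];\R)$, it is itself bounded, closed, and convex in the reflexive Banach space $U={\mathcal L}^2([0,1];\R)\times{\mathcal L}^2([0,1];\R)$, so Theorem~\ref{th:BA1} gives that $K_2$ is weakly compact. Thus, given an arbitrary $\{u^k\}\subset K_2\cap h^{-1}(z)$, I can extract a subsequence $u^{k_l}\rightharpoonup u\in K_2$ by Theorem~\ref{th:ES}, and it remains only to show that $u\in h^{-1}(z)$.

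This reduces the lemma to establishing the weak sequential continuity of $h$ at $u$. The main obstacle, and the only nontrivial content of the argument, lies in the equations for $\dot x_4$ and $\dot x_5$: after integrating them as in the proof of Lemma~\ref{lem:h for P3}, the corresponding components of $h$ contain nonlinear terms of the shape
\[
\int_0^t\!\left(\int_0^r \big(u_1(s)+u_2(s)\big)\cos\big(\pi(u_1-u_2)(s)\big)\,ds\right)dr
\]
and its $\sin$-analogue, where $\pi(u_1-u_2)$ is the affine-in-double-integral expression that reproduces $x_3(\cdot)$. These nonlinear terms are precisely of the form $\rho^{\varphi}_j(\cdot,t)$ (with boundary contributions of the form $\eta^{\varphi}_j(\cdot,r)$) appearing in Lemma~\ref{lem:varfi}, with $m=2$ and $\varphi\in\{1,\sin,\cos\}$, each of which is continuous and globally bounded, so hypothesis (a) of Lemma~\ref{lem:varfi} is satisfied. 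The remaining components of $h$, coming from the linear equations $\dot x_1=x_4$, $\dot x_2=x_5$, $\dot x_3=x_6$, and $\dot x_6=0.2(u_1-u_2)$, are affine in $u$ and therefore automatically weakly sequentially continuous.

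Applying Lemma~\ref{lem:varfi} component-wise along $u^{k_l}\rightharpoonup u$ then yields $h(u^{k_l})\to h(u)$ in $\R^6$. Since $h(u^{k_l})=z$ for all $l$, passing to the limit gives $h(u)=z$, hence $u\in K_2\cap h^{-1}(z)$. This shows that $K_2\cap h^{-1}(z)$ is weakly sequentially compact; Theorem~\ref{th:ES} then upgrades this to weak compactness, and Fact~\ref{lem:Weakly Closed}(b) delivers weak closedness. The only bookkeeping step I expect to require care is the careful identification of each nonlinear term in $h$ produced by Lemma~\ref{lem:h for P3} with a suitable $\eta^{\varphi}_j$ or $\rho^{\varphi}_j$; once that identification is made, the weak-continuity step is handed to us by Lemma~\ref{lem:varfi}.
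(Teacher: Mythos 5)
Your proposal is correct and follows essentially the same route as the paper: Eberlein--Smulian plus weak compactness of $K_2$ to extract a weakly convergent subsequence, then Lemma~\ref{lem:varfi} (with $\varphi\in\{1,\sin,\cos\}$) to pass to the limit in each component of $h$; the paper merely phrases this as weak compactness of each $\Gamma_j:=h_j^{-1}(z_j)\cap K_2$ separately before intersecting. One small mislabel: the components arising from $\dot x_1=x_4$ and $\dot x_2=x_5$ are \emph{not} affine in $u$ --- they are exactly the double-integral $\rho^{\varphi_c},\rho^{\varphi_s}$ terms you already display --- but since you do treat those terms via Lemma~\ref{lem:varfi}, nothing is missing.
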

\begin{proof}
 See the proof of  Lemma~\ref{lem:cond b for P3} in \hyperlink{prf:cond b for P3}{Appendix}.
\end{proof}

\begin{corollary}[Problem $(P3)$ verifies (H0)--(H2)]
    \label{lem:DP for (P3)}
Let $h$ be as in Lemma \ref{lem:h for P3}. Consider for Problem $(P3)$ the dualizing parametrization $f:{\mathcal L}^2([0,1];\R)\times {\mathcal L}^2([0,1];\R) \times \R^6\to \R_{\infty}$ defined by
\[
f(u,z):=\varphi(u)+\delta_{z}(h(u))+\delta_K(u)= \varphi(u)+\delta_{h^{-1}(z)\cap K}(u).
\]  
Then assumptions {\rm\hyperlink{H0}{(H0)}--\hyperlink{H2}{(H2)}} hold for Problem $(P3)$.
\end{corollary}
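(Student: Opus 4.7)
The plan is to obtain the corollary as a direct application of Theorem~\ref{th:example_verify_H}, since Lemmas~\ref{lem:h for P3} and~\ref{lem:cond b for P3} together supply most of what is needed. I would organize the verification around checking the three hypotheses (a), (b), (c) of that theorem for $U := {\mathcal L}^2([0,12];\R)\times {\mathcal L}^2([0,12];\R)$, $H := \R^6$, $K := K_2$, and $h$ as constructed in Lemma~\ref{lem:h for P3} (where I should be careful about the time horizon; the statement uses $[0,1]$ but the actual problem $(P3)$ is on $[0,12]$, which is irrelevant to the argument since all results hold on any compact interval).

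The first step is to verify hypothesis~(a) for $\varphi(u) := \int_0^{12}\bigl(u_1^2(t)+u_2^2(t)\bigr)dt$. This $\varphi$ is properness is trivial since $\varphi(0)=0$. For coercivity, note that $\varphi(u)=\|u\|_{{\mathcal L}^2}^2$, which tends to $\infty$ as $\|u\|_{{\mathcal L}^2}\to\infty$. For weak lower semicontinuity: $\varphi$ is convex (the square of a norm is convex) and strongly lsc (in fact continuous) on ${\mathcal L}^2$, and hence by Theorem~\ref{th:BA2} it is w-lsc. So~(a) holds.

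The second step is hypothesis~(b), namely that $K_2\cap h^{-1}(z)$ is weakly closed for every $z\in \R^6$. This is precisely the content of Lemma~\ref{lem:cond b for P3}, which asserts w-compactness (and hence w-closedness by Fact~\ref{lem:Weakly Closed}(b)). Hypothesis~(c) is tautological: the dualizing parametrization $f$ stated in the corollary is literally the one prescribed in Theorem~\ref{th:example_verify_H}(c), and the identity
\[
\varphi(u)+\delta_z(h(u))+\delta_{K_2}(u) = \varphi(u)+\delta_{K_2\cap h^{-1}(z)}(u)
\]
is immediate from the definition of the indicator function.

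The final step is to observe that the solution set $S(P3)$ is nonempty, so that Theorem~\ref{th:example_verify_H} applies: this follows from the existence of a feasible minimizer (which is the standard assumption we are working under, and is in fact well known for the FFR problem from the references cited in the introduction). Applying Theorem~\ref{th:example_verify_H} then yields assumptions~\hyperlink{H0}{(H0)}--\hyperlink{H2}{(H2)} for Problem $(P3)$, completing the proof. There is no real obstacle here; the work has been done in Lemmas~\ref{lem:h for P3}, \ref{lem:cond b for P3} and in Theorem~\ref{th:example_verify_H}, and the corollary is essentially a packaging statement. The only subtlety worth flagging is to confirm nonemptiness of $S(P3)$, which may warrant a one-line justification (e.g., invoking continuity of $\varphi$ on the w-compact feasible set $K_2\cap h^{-1}(0)$ together with w-lsc to apply Weierstrass).
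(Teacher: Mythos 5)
Your proposal is correct and follows essentially the same route as the paper: both reduce the corollary to Theorem~\ref{th:example_verify_H}, verifying (a) as in the double-integrator case, obtaining (b) from Lemma~\ref{lem:cond b for P3}, noting (c) is definitional, and justifying $S(P3)\neq\emptyset$ (the paper cites the known uniqueness result for the free-flying robot, while you also sketch a Weierstrass alternative). Your remark about the $[0,1]$ versus $[0,12]$ time horizon is a fair observation and does not affect the argument.
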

\begin{proof} 
The verification of assumptions (a) and (c) of Theorem \ref{th:example_verify_H} for $(P3)$ is identical to the one in Theorem \ref{th:verify_p1}. Assumption (b) follows from Lemma \ref{lem:cond b for P3} and the fact that $\Gamma(0)\supset S(P)\neq \emptyset$ by \cite[Section 6.2]{BYir}. Indeed, the latter paper shows that there is a unique solution $u$ of $(P3)$. By Theorem \ref{th:example_verify_H}, we conclude that $(P3)$ verifies {\rm\hyperlink{H0}{(H0)}--\hyperlink{H2}{(H2)}}.
\end{proof}

\subsubsection{Optimality Conditions for Problem $(P3)$}
{
We consider the optimality conditions for Problem $(P3)$ as computed in Section~\ref{sec:optimalityCondn}. 
The Hamiltonian function $H:\R^6\times\R^2\times\R^6\to\R$ for Problem $(P3)$ is
\begin{equation*}\label{HamiltonianRobot}
    \begin{array}{lll}
          H(x,u,\lambda) & = &  
          u_1^2+u_2^2+\lambda_1x_4+\lambda_2x_5+\lambda_3x_6
    + \lambda_4\left(u_1+u_2\right)\,\cos x_3 \\[1mm]
          & & +\ \lambda_5\left(u_1+u_2\right)\sin x_3
    +0.2\lambda_6\left(u_1-u_2\right),
    \end{array}
\end{equation*}
where the state variable vector $x(t) = (x_1(t),\ldots,x_6(t)) \in\R^6$, the control variable vector ${u}(t) =(u_1(t),u_2(t))\in\R^2$.  The adjoint variable vector $\lambda(t)=(\lambda_1(t),\ldots,\lambda_6(t))\in \R^6$ satisfies, by Equation \eqref{eq:adjoint},
\begin{equation*}\label{AdjointFFR}
\begin{array}{l}
     \lambda_1(t)=c_1, \quad    \lambda_2(t)=c_2, \quad    \lambda_4(t)=-c_1t+c_4,
     \quad    \lambda_5(t)=-c_2t+c_5,  \quad  
     \dot \lambda_6(t) = \lambda_3(t),  \quad \mbox{and}\\[1mm]
     \dot \lambda_3(t) = \lambda_4(t)\left(u_1(t)+u_2(t)\right)\,\sin x_3(t)
          -\lambda_5(t)\left(u_1(t)+u_2(t)\right)\cos x_3(t),    
\end{array}
\end{equation*}
for all $t\in[0,1]$, where $c_1$, $c_2$, $c_4$ and $c_5$ are real constants. By Equation~\eqref{eq:controlOptCdn}, we obtain the optimal control variables as follows
\begin{equation}\label{eq:u1controlFFR}
u_1(t)=\,
  \left\{
    \begin{array}{lcl}
    -\psi_1(t) & , & \mbox{if}\,\, -0.8\leq\psi_1(t)\leq 0.8,\\[2mm]
    \ \ \,0.8 & , & \mbox{if}\,\, \psi_1(t) \leq -0.8,\\[2mm]
    -0.8 & , & \mbox{if}\,\, \psi_1(t) \geq 0.8,\\
    \end{array}
  \right.
\end{equation}
where the {\em switching function for $u_1$} is given by $\psi_1(t):=\dfrac{1}{2}\left(\lambda_5(t)\cos x_3(t)+\lambda_4(t)\sin x_3(t)+0.2\lambda_6(t)\right)$;
\begin{equation}\label{eq:u2controlFFR}
u_2(t)=\,
  \left\{
    \begin{array}{lcl}
    -\psi_2(t) & , & \mbox{if}\,\, -0.4\leq \psi_2(t)\leq 0.4,\\[2mm]
    \ \ \,0.4 & , & \mbox{if}\,\, \psi_2(t) \leq -0.4,\\[2mm]
    -0.4 & , & \mbox{if}\,\, \psi_2(t) \geq 0.4,\\
    \end{array}
  \right.
\end{equation}
where the {\em switching function for $u_2$} is given by $\psi_2(t):=\dfrac{1}{2}\left(\lambda_5(t)\cos x_3(t)+\lambda_4(t)\sin x_3(t)-0.2\lambda_6(t)\right)$.
}

\subsubsection{Numerical solution of Problem $(P3)$}
We discretize and solve Problem $(P3)$ numerically as described in Section~\ref{sec:discritization}. We use the PDP algorithm with the two choices of step-sizes we have proposed, and we use Ipopt on its own to solve Problem $(P3)$, running each of the methods 1000 times in order to get reliable statistics. We take 
random initial guesses generated uniformly in given intervals, such that
\[
x_{pi} \in [-0.4, 0.4] \quad \mbox{and}\quad u_{rj}\in[-0.4, 0.4], 
\]
for $p = 1,\dots , 6$, $r = 1, 2$, $i = 0, \dots , N$, $j = 0, \dots , N-1$. The results for $x(\cdot)$ and $\lambda(\cdot)$ are shown in Figure~\ref{fig:stateCostateFFR}, while those for $u_i(\cdot)$ and their switching functions $\psi_i(\cdot)$, $i = 1,2$, are displayed in Figure~\ref{fig:upsiFFR}.  The graphs in Figure~\ref{fig:upsiFFR} play the role of a certificate verifying the optimality conditions given in \eqref{eq:u1controlFFR} and \eqref{eq:u2controlFFR}.  Recall that we had already included the trajectory of the free-flying robot earlier in the $x_1x_2$-plane in Figure~\ref{fig:trjectory}.

 \begin{figure}[t]
  \begin{subfigure}{.48\textwidth}
\centering
  \includegraphics[width=\textwidth]{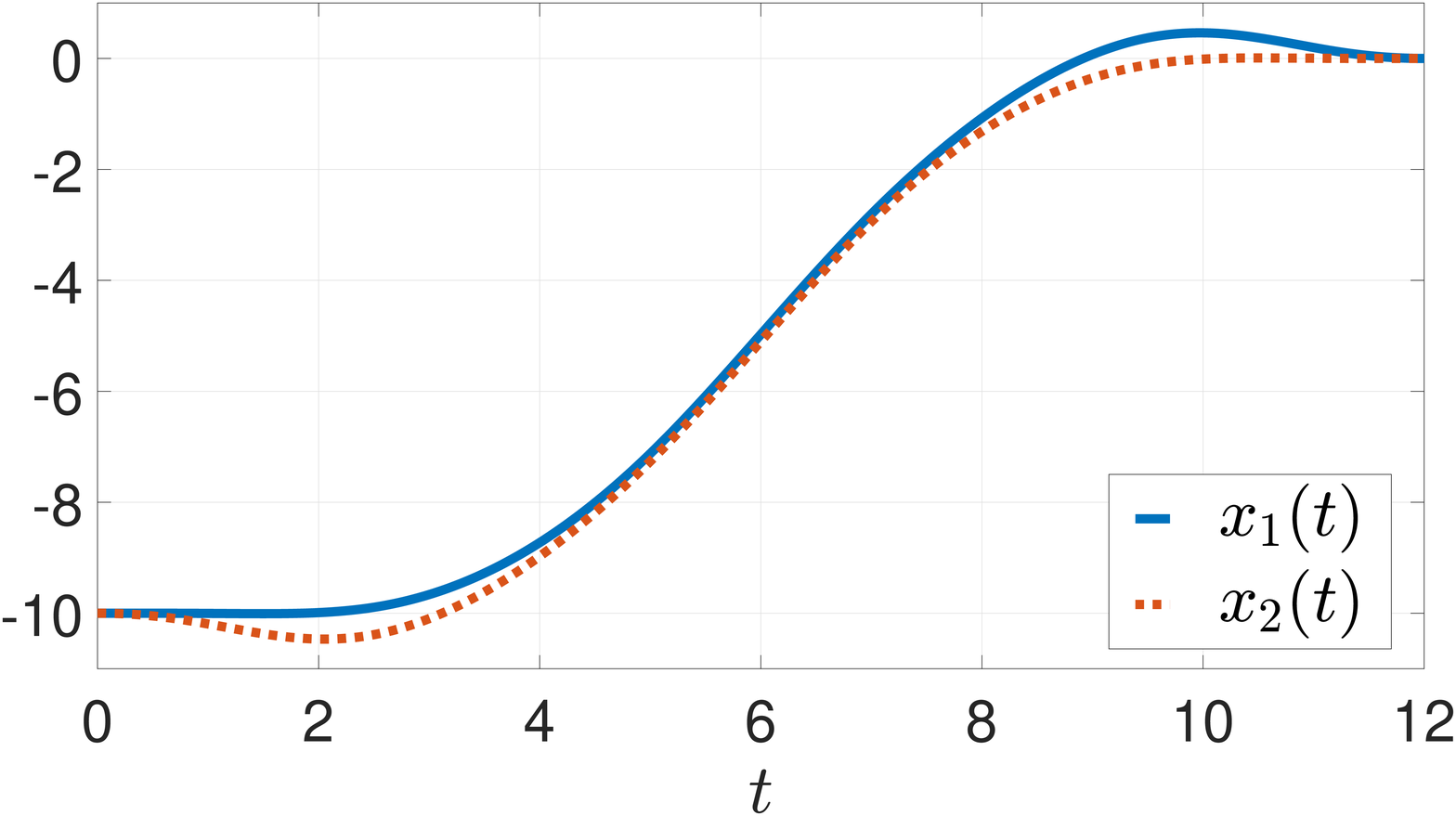}
  \label{fig:xFigUnconstr}
\end{subfigure}
\hfill
\begin{subfigure}{.48\textwidth}
 \centering
  \includegraphics[width=\textwidth]{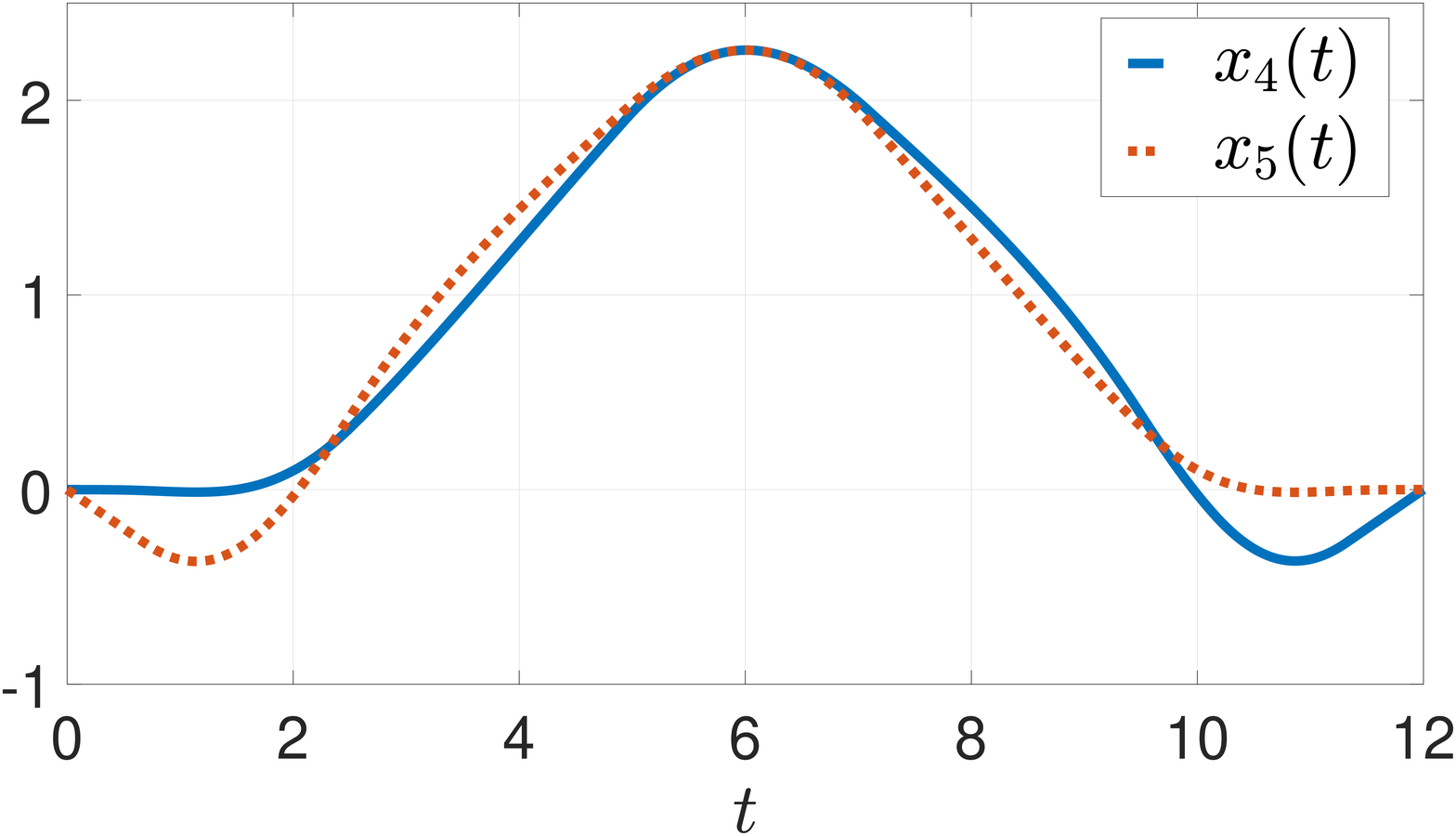}
  \label{fig:uFigUnconstr}
  \end{subfigure}
  \label{fig:uFigUnconstrr}\\
    \begin{subfigure}{.48\textwidth}
\centering
  \includegraphics[width=\textwidth]{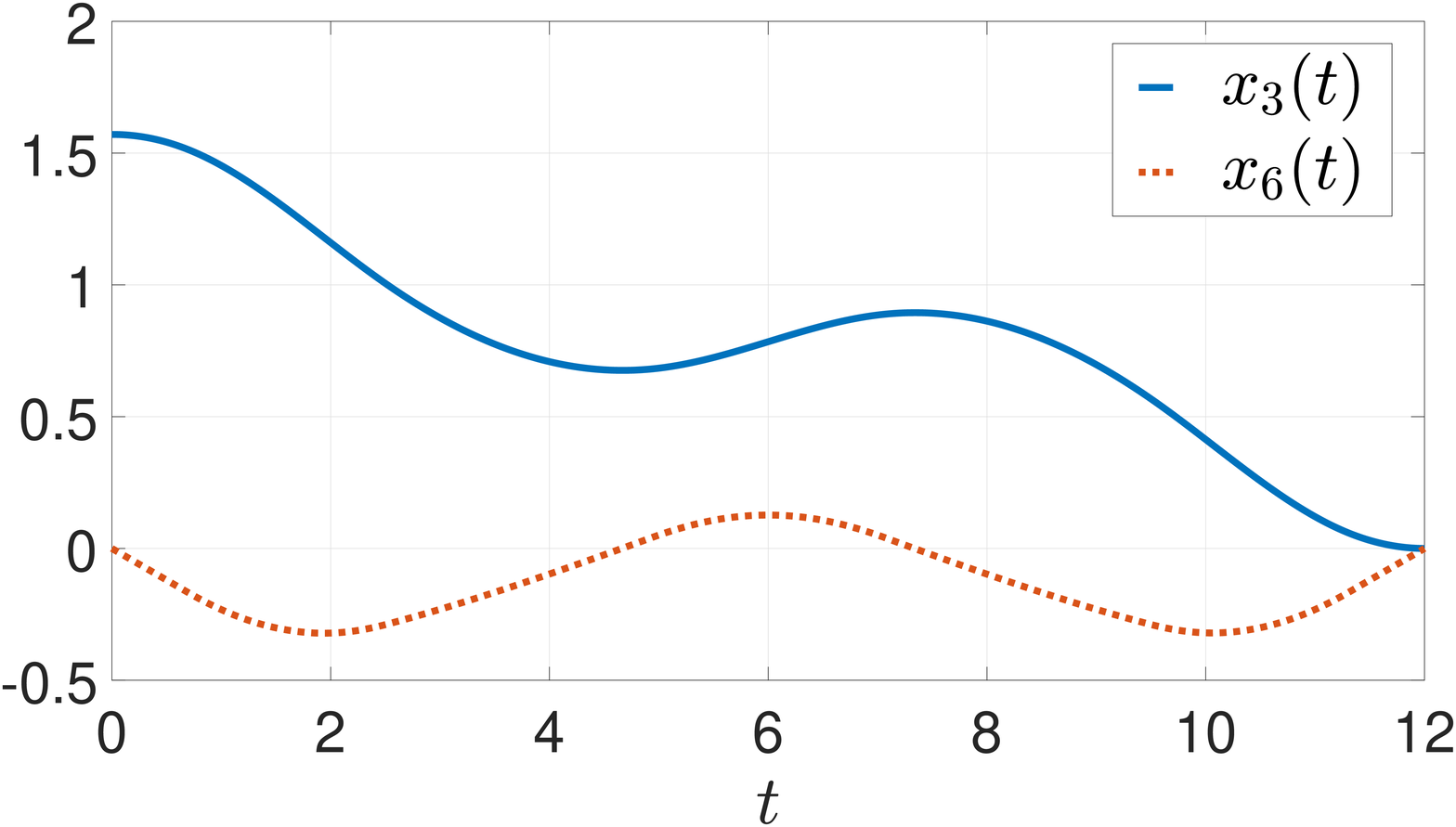}
  \label{fig:stateFigUnconstr}
\end{subfigure}
\hfill
\begin{subfigure}{.48\textwidth}
 \centering
  \includegraphics[width=\textwidth]{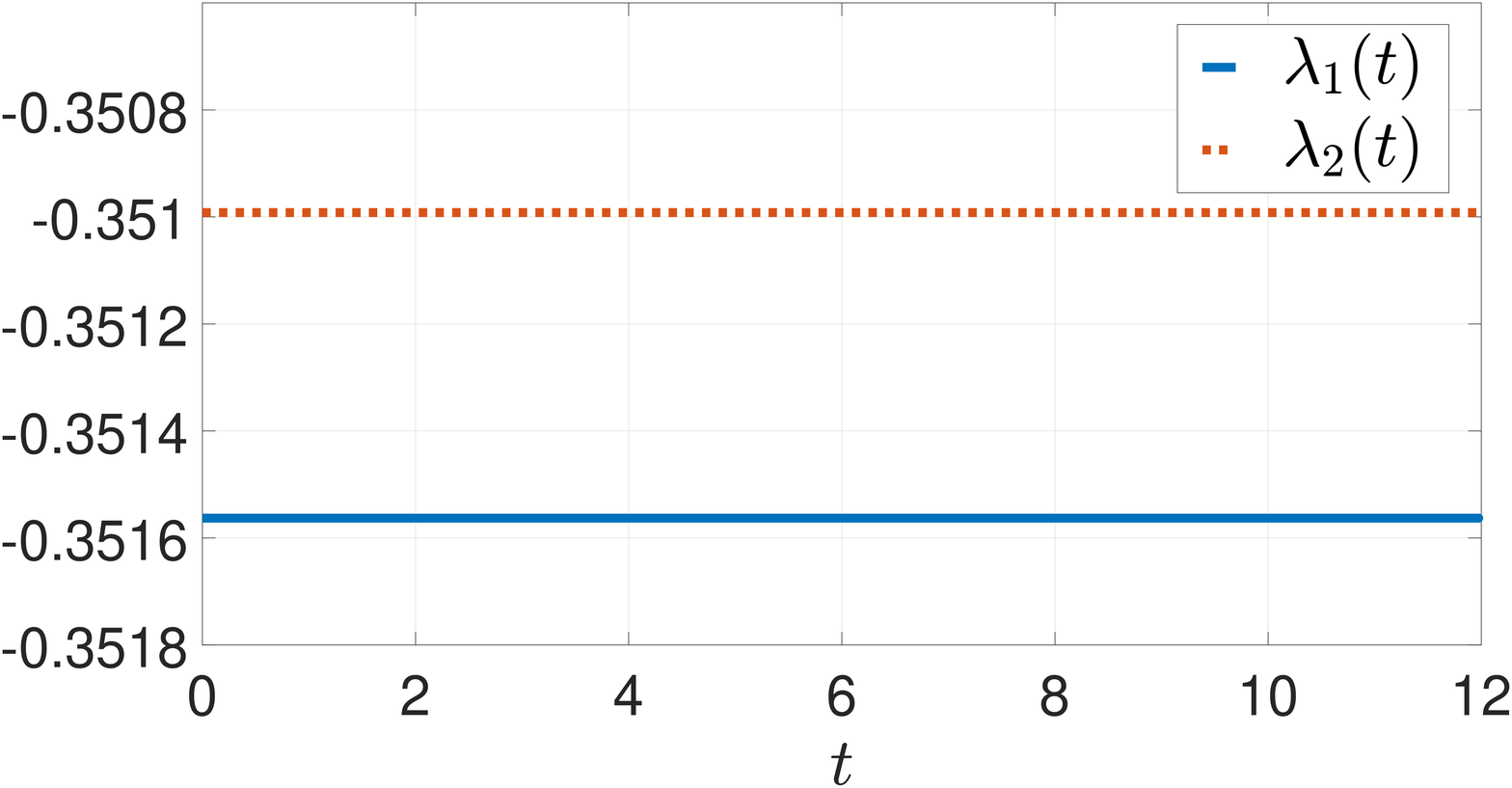}
  \label{fig:uFigUnconstr2}
  \end{subfigure}\\
      \begin{subfigure}{.48\textwidth}
\centering
  \includegraphics[width=\textwidth]{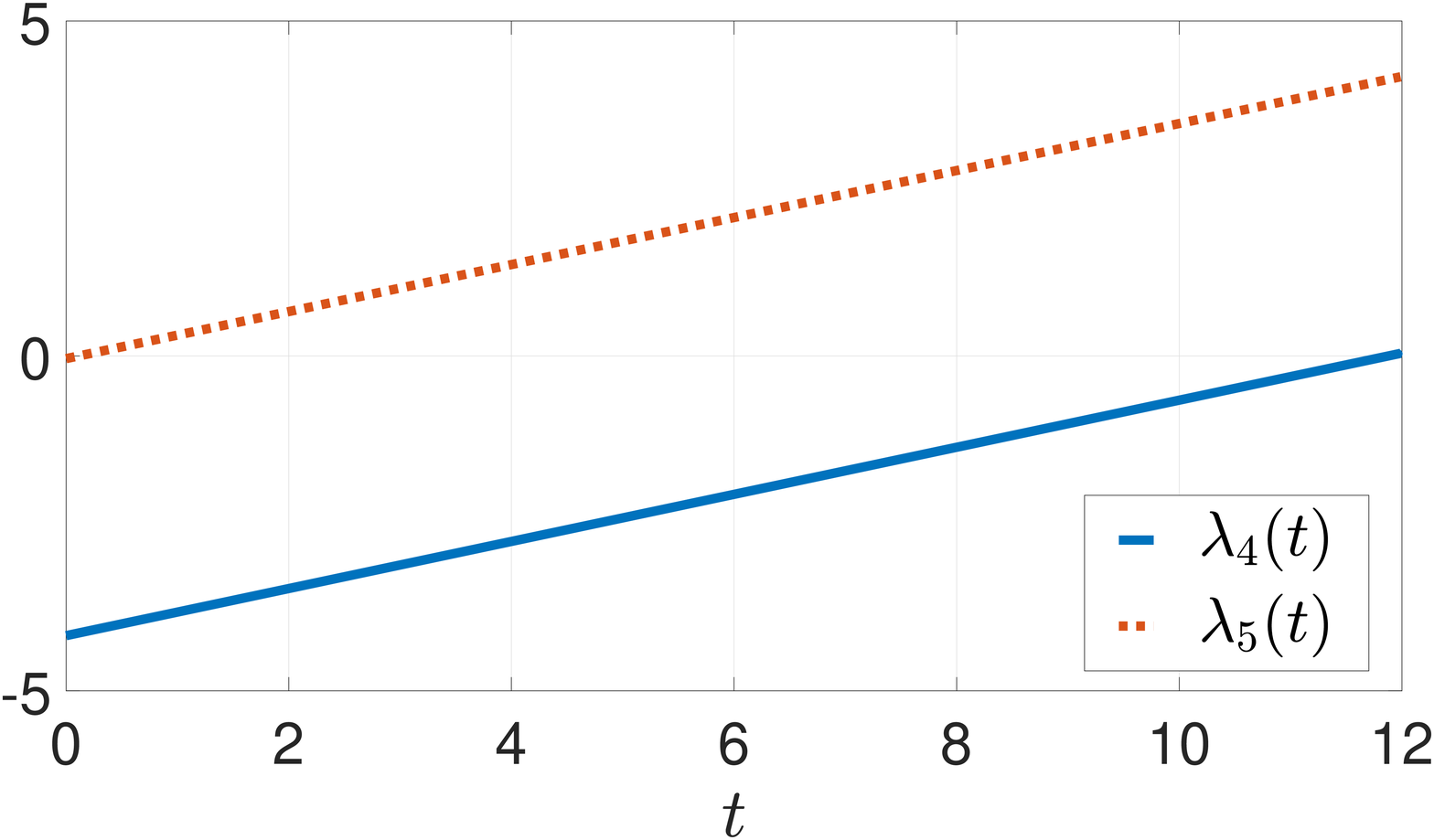}
  \label{fig:xFigUnconstr2}
\end{subfigure}
\hfill
\begin{subfigure}{.48\textwidth}
 \centering
  \includegraphics[width=\textwidth]{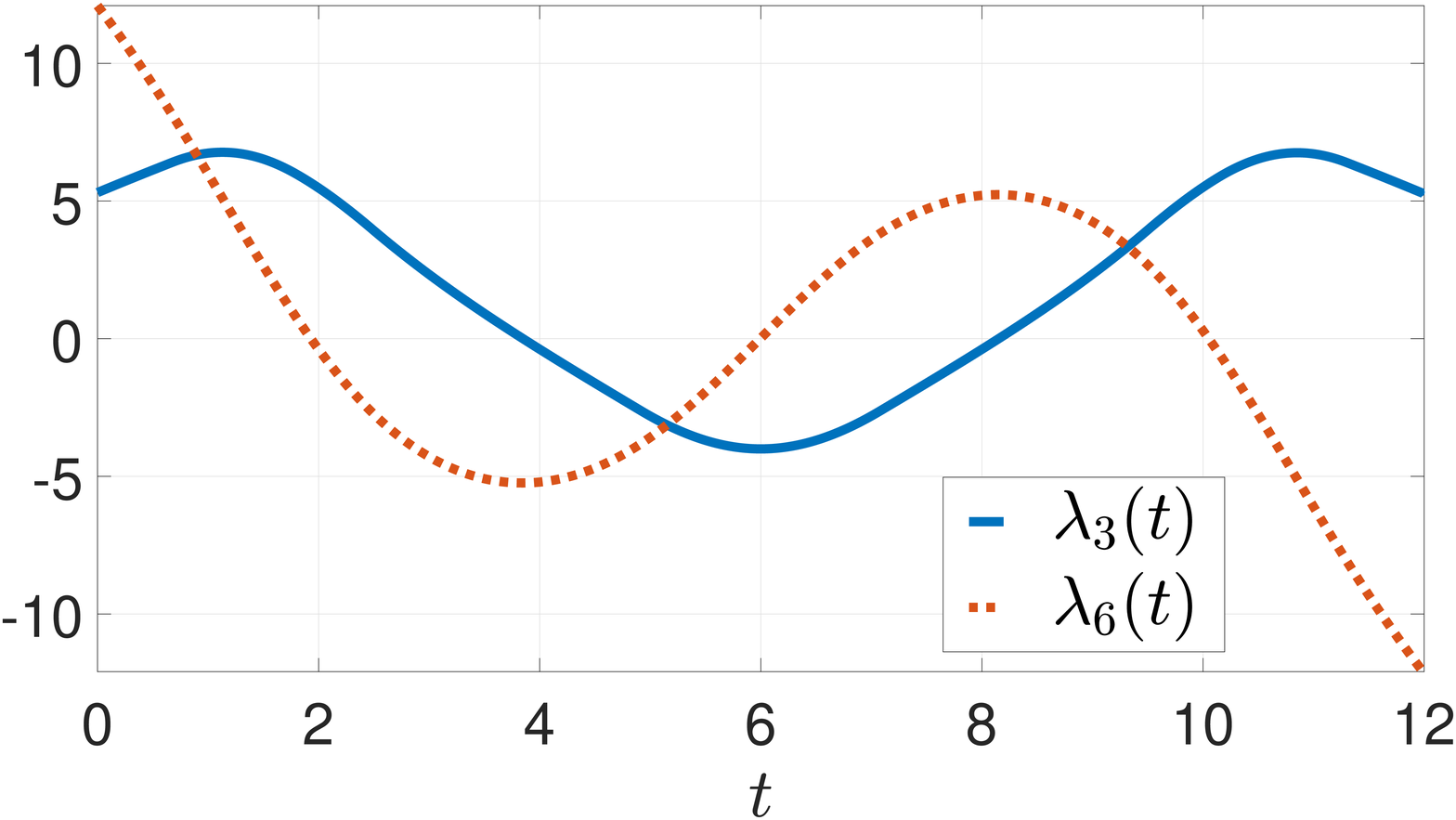}
  \label{fig:uFigUnconstr3}
  \end{subfigure}\
  \caption{\sf Problem $(P3)$---Optimal state and adjoint variables.}
  \label{fig:stateCostateFFR}
\end{figure}

In the PDP algorithm with the step-size in \eqref{sk_DSG1}, we have taken
\[
s_k=(1+\alpha_k)\left[\dfrac{1}{2}\min(\eta, \|h(u_k)\|_2) + \dfrac{1}{2}\max(\beta, \|h(u_k)\|_1 + \|h(u_k)\|_2)\right],
\]
where $\alpha_k=0.4$ for all $k$, $\eta= 0.1$, and $\beta=1$, and 
$h(u_{c_k})$ is the constraint function $h$ at the current iterate $u_{c_k}$, for $h$ as given in \eqref{app:h} in the proof of Lemma \ref{lem:h for P3}. In the PDP algorithm with the step-size in \eqref{sk_DSG2}, we have taken $s_k=(1+\alpha_k)\gamma_k$, where
\[
\gamma_k\in \left[\dfrac{\theta_k}{\|h(u_k)\|_1},\dfrac{\beta}{\|h(u_k)\|_1}\right]\,,
\]
and $\alpha_k=1$, $\theta_k=1$ for all $k$, and $\beta=2$.
The resulting dual function value iterates of the PDP algorithm, superimposed with the numerically computed graph of the dual function for Problem~$(P3)$, are displayed in Figure~\ref{fig:iterationFFR}. 

In Figure~\ref{fig:uApproxEg3}, we illustrate the iterations of $u(t)$ using the same step-size of type 2 as in Figure~\ref{fig:freeFlyRobot2}. The performance of each approach, with randomly generated initial guesses, is presented in Table~\ref{table:FFRobot}. Note that the CPU time in column 5 for Ipopt alone corresponds to the average CPU time for all runs, successful or unsuccessful.  The CPU time for Ipopt alone in column 8 (last column) corresponds to the average CPU time for successful runs only.

When compared with the case of using Ipopt on its own, the PDP method achieves $100\%$ success rate at all levels of discretization shown in Table \ref{table:FFRobot}. Moreover, PDP has a better performance in terms of CPU time when the number of discretization points is greater than $2000$ and Ipopt is successful. When compared with the inexact restoration algorithm proposed in \cite{BYir}, the PDP method has a similar success rate and a better performance in terms of the CPU time, although the codes of either approach were run on different computers.

\begin{figure}[t]
\centering
  \begin{subfigure}{.49\linewidth}
\includegraphics[width=\linewidth]{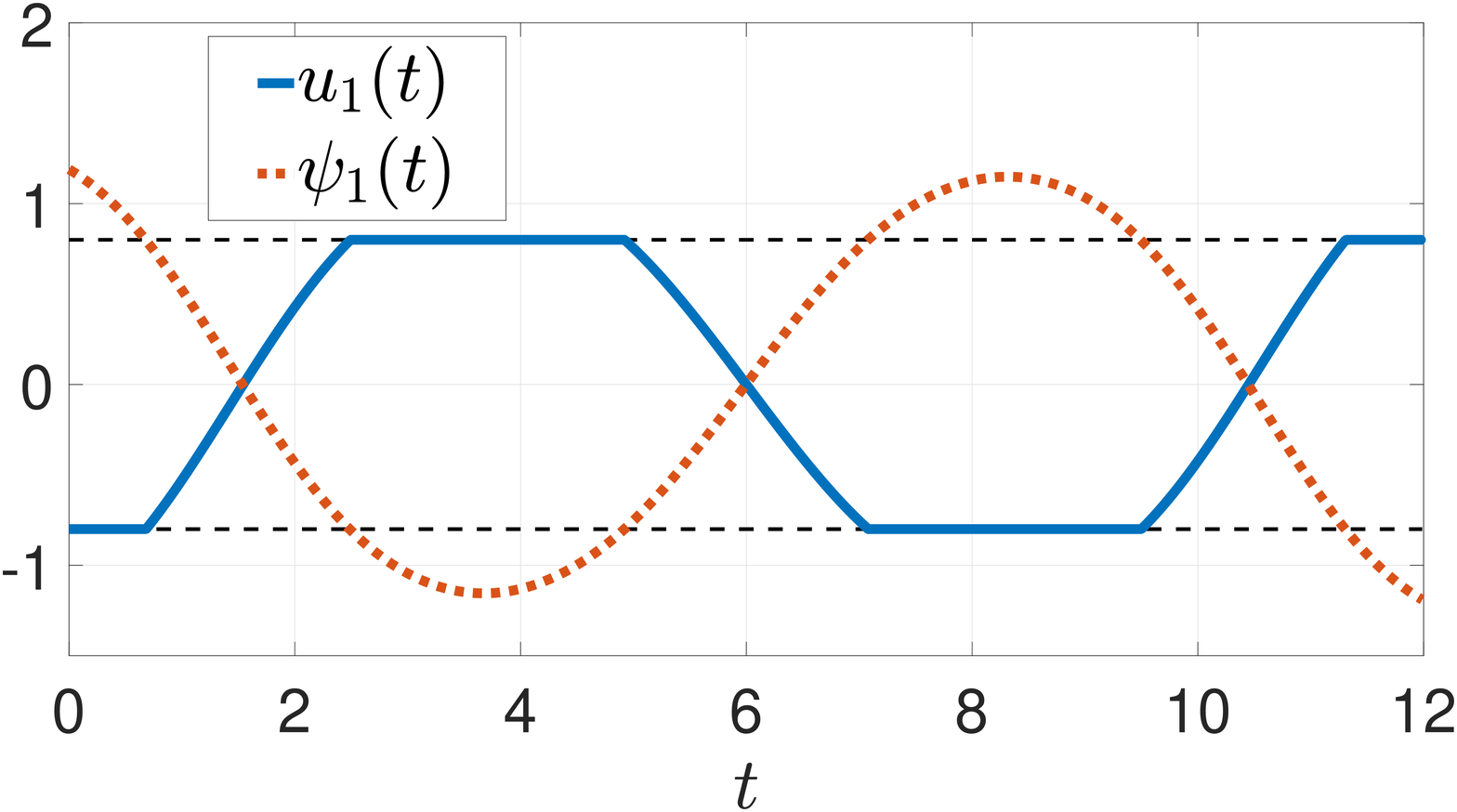}
  \caption{\small Optimal control $u_1$ and its switching function $\psi_1$.}
 \label{fig:uFigUnconstr4}
\end{subfigure}
\centering
\begin{subfigure}{.49\linewidth}
\includegraphics[width=\linewidth]{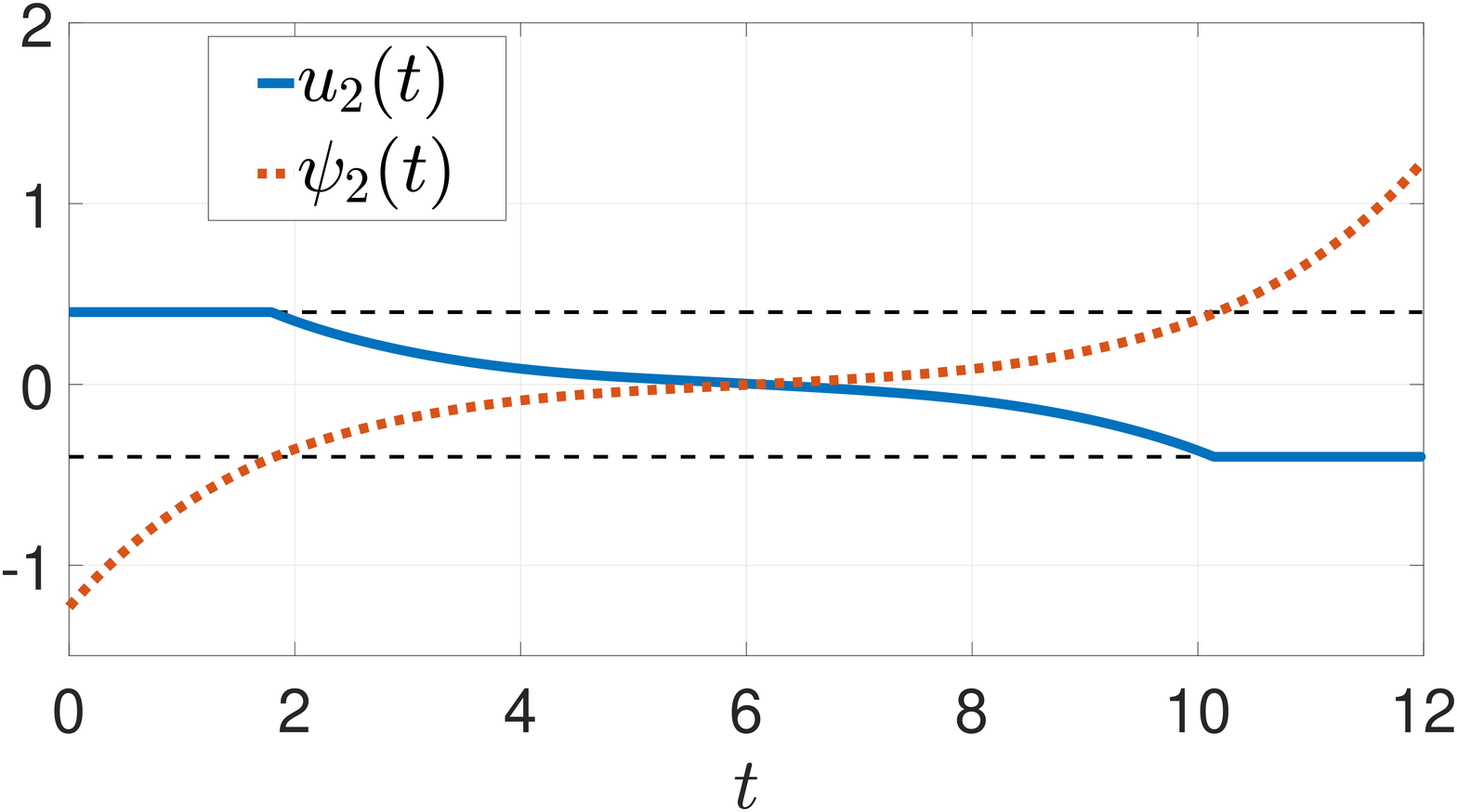}
 \caption{\small Optimal control $u_2$ and its switching function $\psi_2$.}
  \label{fig:u2psi2}
  \end{subfigure}
  \caption{\sf Problem $(P3)$---The optimal control variables and their switching functions.}
  \label{fig:upsiFFR}
\end{figure}

\begin{figure}[ht]
  \centering
  \begin{subfigure}{.49\linewidth}
\includegraphics[width=\linewidth]{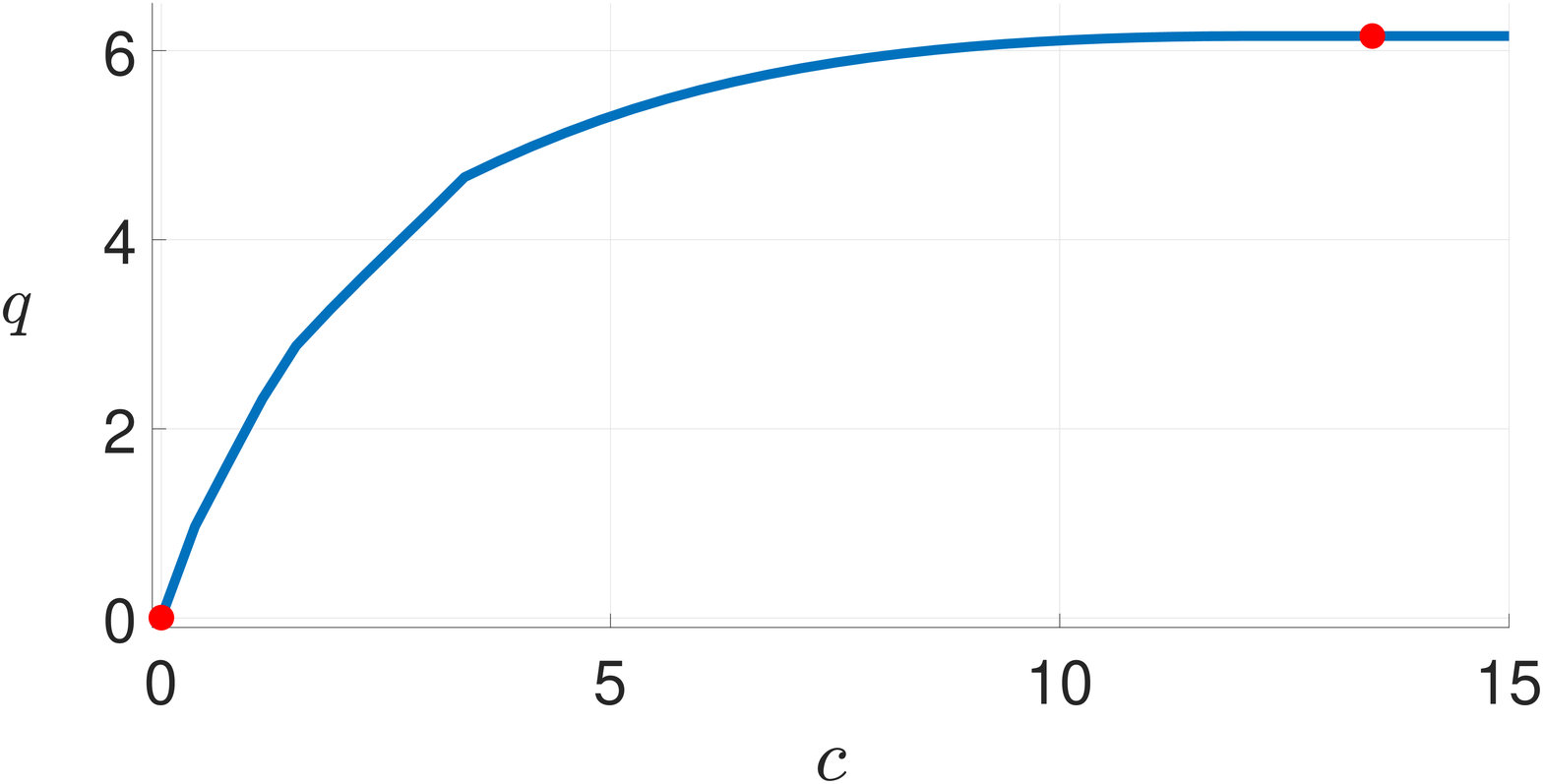}
  \caption{\small Algorithm~PDP with the step-size in \eqref{sk_DSG1}.}
  \label{fig:freeFlyRobot1}
\end{subfigure}
\begin{subfigure}{.49\linewidth}
\includegraphics[width=\linewidth]{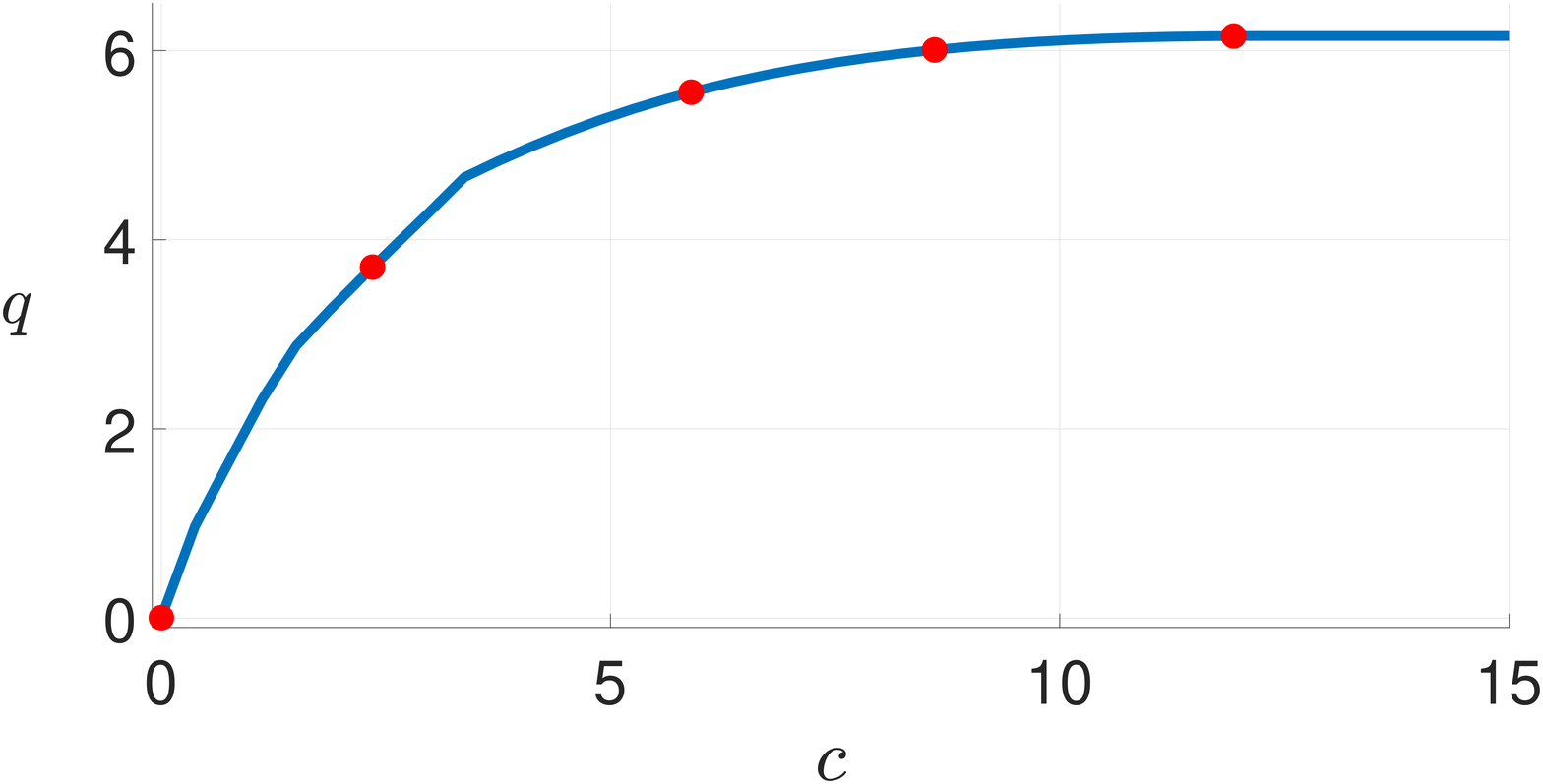}
  \caption{\small Algorithm~PDP with the step-size in \eqref{sk_DSG2}.}
  \label{fig:freeFlyRobot2}
  \end{subfigure}
  \caption{\sf Problem $(P3)$---The dual function updates (shown by red dots on the blue curve representing the graph of the dual function) in each iteration of Algorithm~PDP using step-sizes of type 1 and 2.}
  \label{fig:iterationFFR}
\end{figure}
 We observe that when using Ipopt alone to solve the problem, the success rate decreases as $N$ increases. This is because in practice, the computation of a solution becomes harder as the number of optimization variables increases. When we use the PDP algorithm, the ODE system is not a hard constraint anymore, but instead it is reflected in the objective function of the minimization step. The fact that the ODE system is no longer a hard constraint seems to have a beneficial effect in terms of CPU time. More experimentation, with different types of problems, however, is needed to determine precisely whether or not this is the reason for the better performance in terms of CPU of PDP. We use different number of discretization $N$ to compute $u$ for Problem $(P3)$ by our PDP algorithm with the same step-size of type 2 as in Figure~\ref{fig:freeFlyRobot2}. 
 
 We plot the solution of $u$ with $N=39, 100$ and $\infty$ (the case of $\infty$ is represented by the solution obtained by $N=1000$) in Figure~\ref{fig:uVSnEg3}. The second coordinate $u_2$  seems to be more sensitive to the number of discretization points than the first coordinate $u_1$. Since the curves obtained by the PDP algorithm and those obtained by Ipopt alone are indistinguishable from each other, we only show in Figure~\ref{fig:uVSnEg3} those generated by the PDP algorithm. In the implementation of the PDP algorithm, $N=38$ does not seem to be large enough to observe the true solution pattern with the correct number of junction points in time and the more-or-less correct locations of the junctions, while Ipopt alone seems to yield solutions with correct pattern for $N\le 38$. However, as discussed for Table~\ref{table:FFRobot}, Ipopt alone fails to solve the problem over half of the time and uses more average CPU (when successful) than the PDP algorithm when $N\ge 1000$.

\begin{table}[t!]
\centering
{\small
\begin{tabular}{r | c  c  c | r  c  c | c}
 & \multicolumn{3}{c|}{success rate [\%] }& \multicolumn{3}{c|}{Ave.~CPU time [sec]} 
 & Ave.~CPU time \\
\cline{2-4} \cline{5-7}
 N \ \ \  & Ipopt & \multicolumn{2}{c|}{PDP} & Ipopt & \multicolumn{2}{c|}{PDP} & for Ipopt alone\\
 \cline{3-4} \cline{6-7}
 & alone & $s_k$ \eqref{sk_DSG1} & $s_k$ \eqref{sk_DSG2} & alone & $s_k$ \eqref{sk_DSG1} & $s_k$ \eqref{sk_DSG2} & (when successful) [sec]\\ \hline
 $100$ & $80$ & 100 & 100 & $0.2$ & $0.2$ & $0.6$ & $0.1$\\ 
 [2mm]
 $500$ & $70$ & 100 & 100 & $2.4$ & $0.9$ &  $3.1$ & $0.9$ \\ 
 [2mm]
 $1000$ & $50$ & 100 & 100 & $6.0$ & $1.6$ & $5.8$ & $2.2$ \\ 
 [2mm]
 $2000$ & $30$ & 100 & 100 & $22.7$ & $3.8$ & $12.2$ &$5.2$\\ 
 [2mm]
 $5000$ & $20$ & 100 & 100 & $108.7$ & $9.8$ & $30.8$ & $17.1$\\ 
 [2mm]
 $10000$ & $10$ & 100 & 100 & $517.6$ & $16.3$ & $62.3$ & $46.4$\\ 
 \hline
 \end{tabular}
 }
\caption{\small Problem $(P3)$---Success rates $[\%]$ correct to one significant figure for the PDP algorithm and Ipopt alone for the free-flying robot.}
\label{table:FFRobot}
\end{table}

\begin{figure}[t!]
  \centering
  \begin{subfigure}{.49\linewidth}
\includegraphics[width=\linewidth]{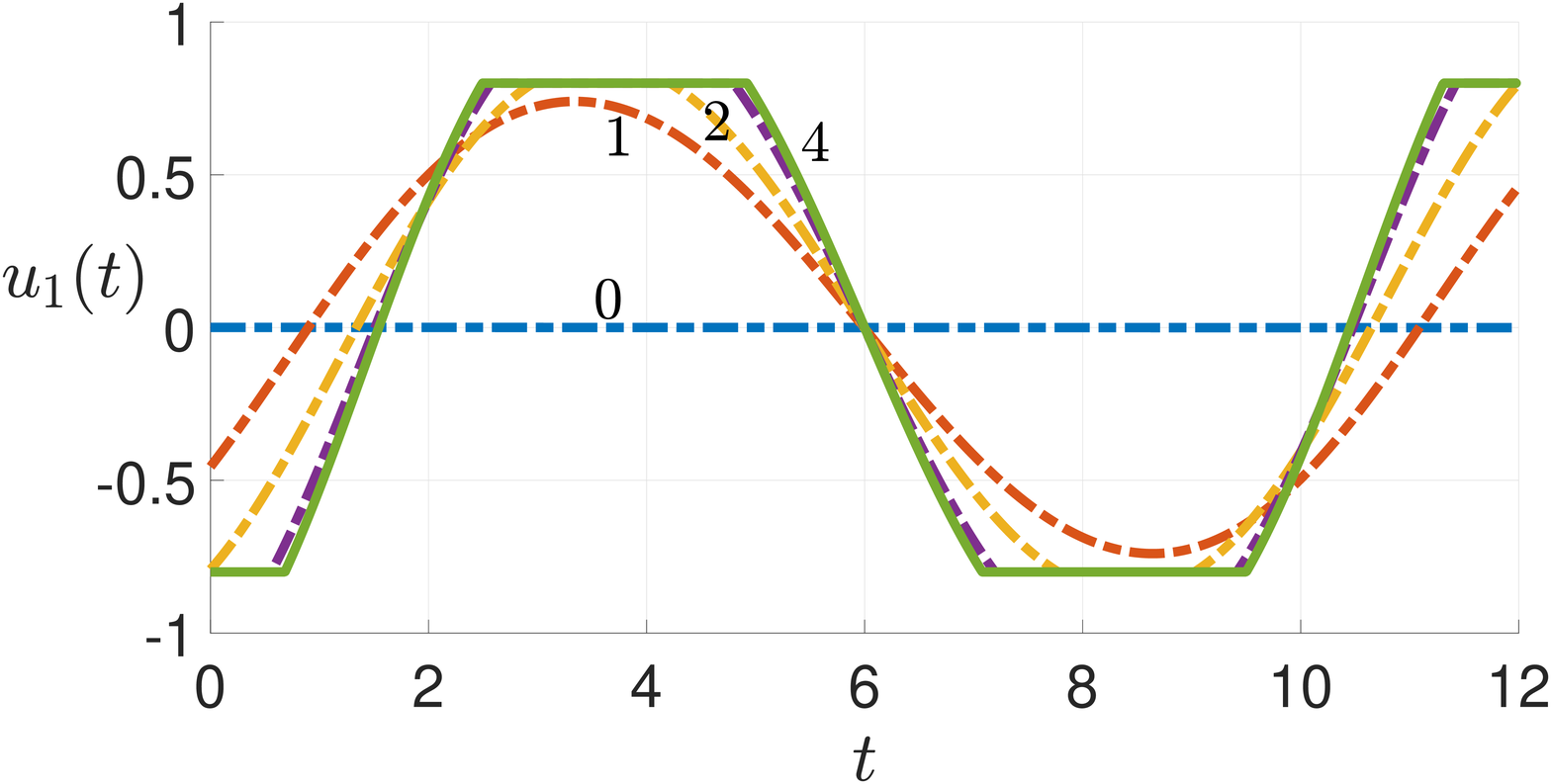}
  \caption{\small Iterations of $u_1$ with $s_k$ in \eqref{sk_DSG2}.}
  \label{fig:u1ApproxEg3}
\end{subfigure}
\begin{subfigure}{.49\linewidth}
\includegraphics[width=\linewidth]{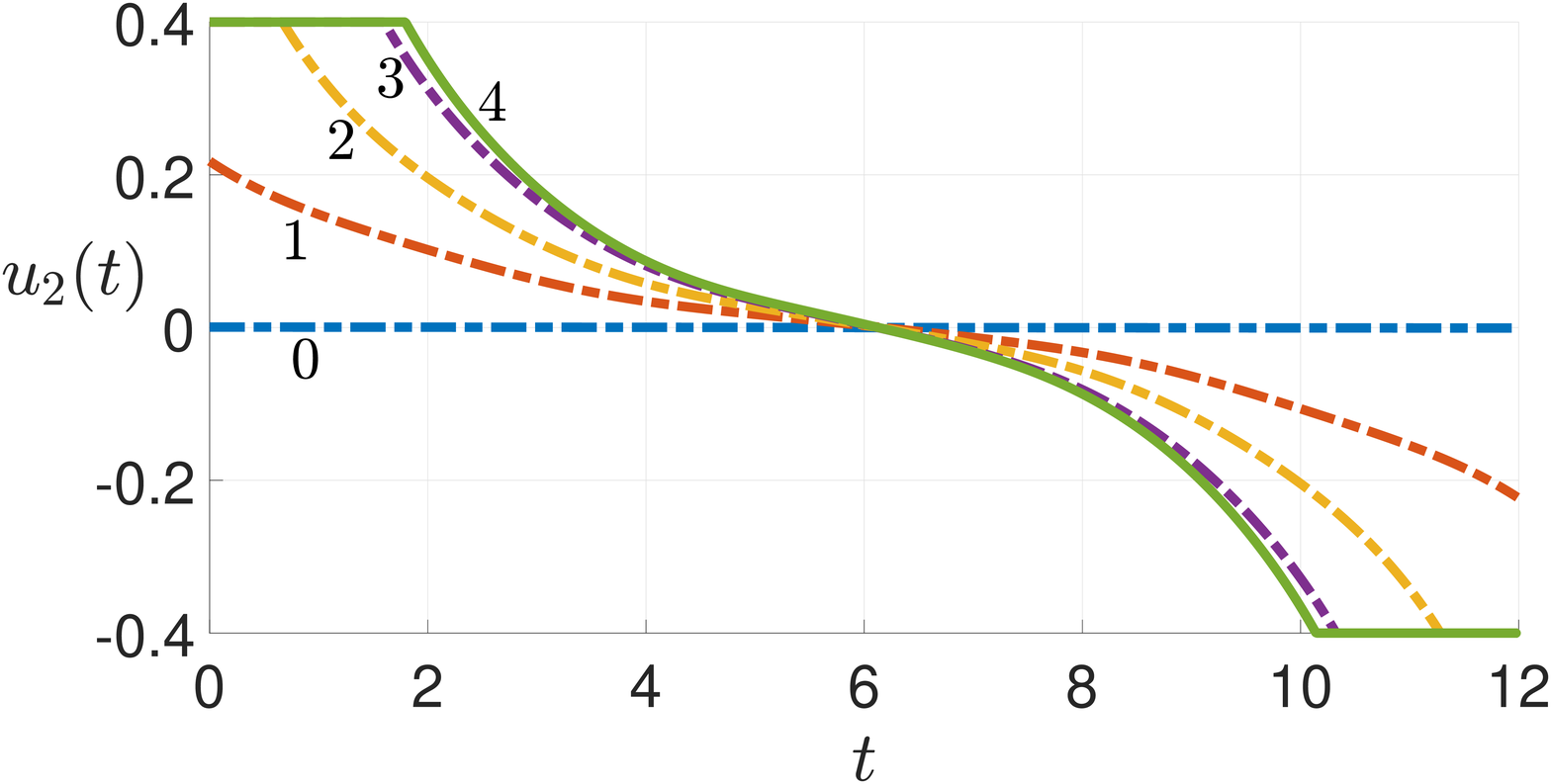}
  \caption{\small Iterations of $u_2$ with $s_k$ in \eqref{sk_DSG2}.}
  \label{fig:u2ApproxEg3}
  \end{subfigure}
  \caption{\sf Problem $(P3)$---The iterations of $u$ labelled 0--4, standing for $u_{i,j}$, $i=1,2$ and $j=0,1,2,3,4$, obtained by algorithm PDP-2. In (a), $u_{1,3}$ is not labelled for clarity in the appearance.}
  \label{fig:uApproxEg3}
\end{figure}

\begin{figure}[t!]
  \centering
  \begin{subfigure}{.49\linewidth}
\includegraphics[width=\linewidth]{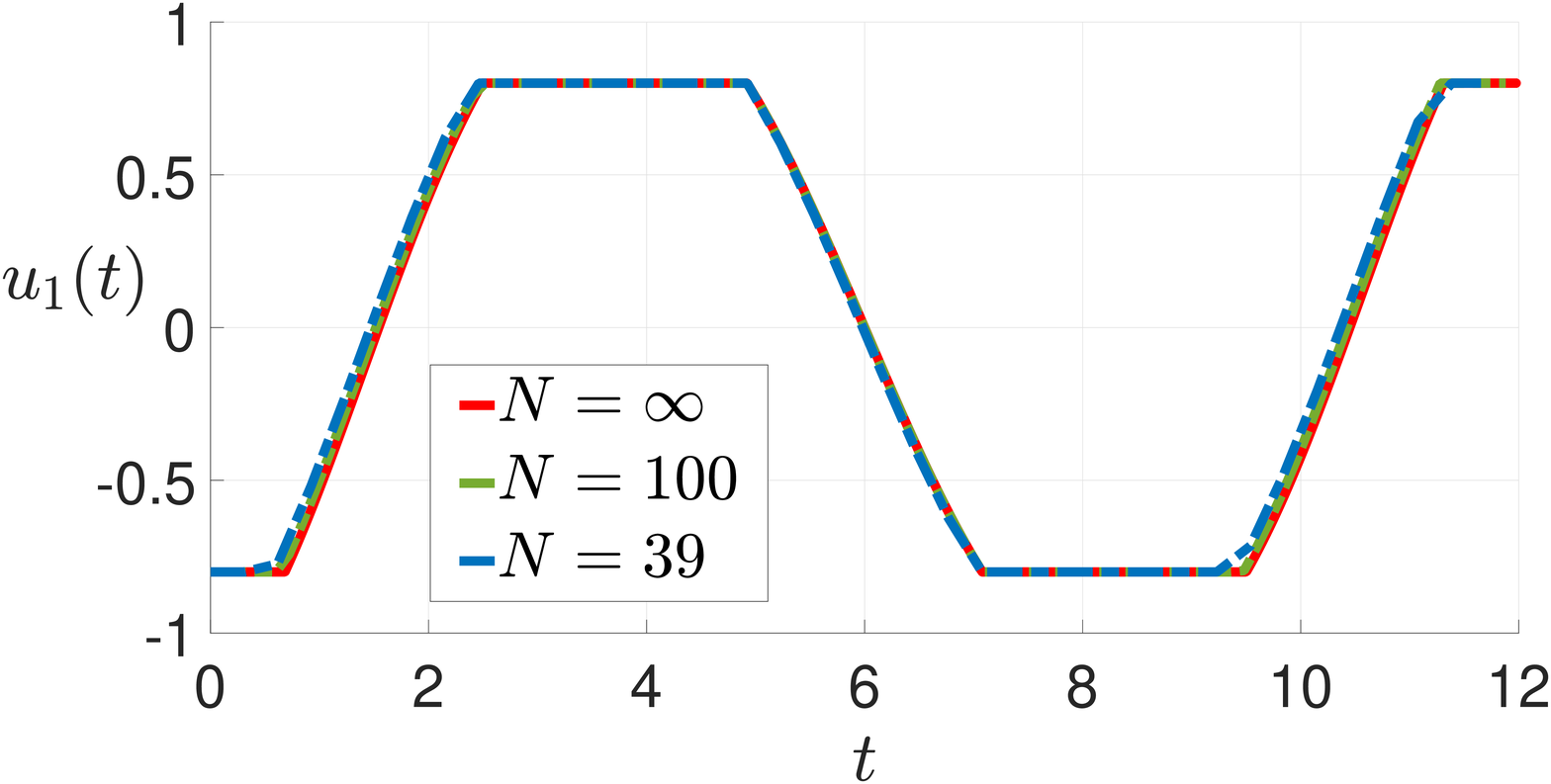}
  \caption{\small $u_1(\cdot)$ by algorithm PDP-2.}
  \label{fig:u1ApproxEg3_PDP}
\end{subfigure}
\begin{subfigure}{.49\linewidth}
\includegraphics[width=\linewidth]{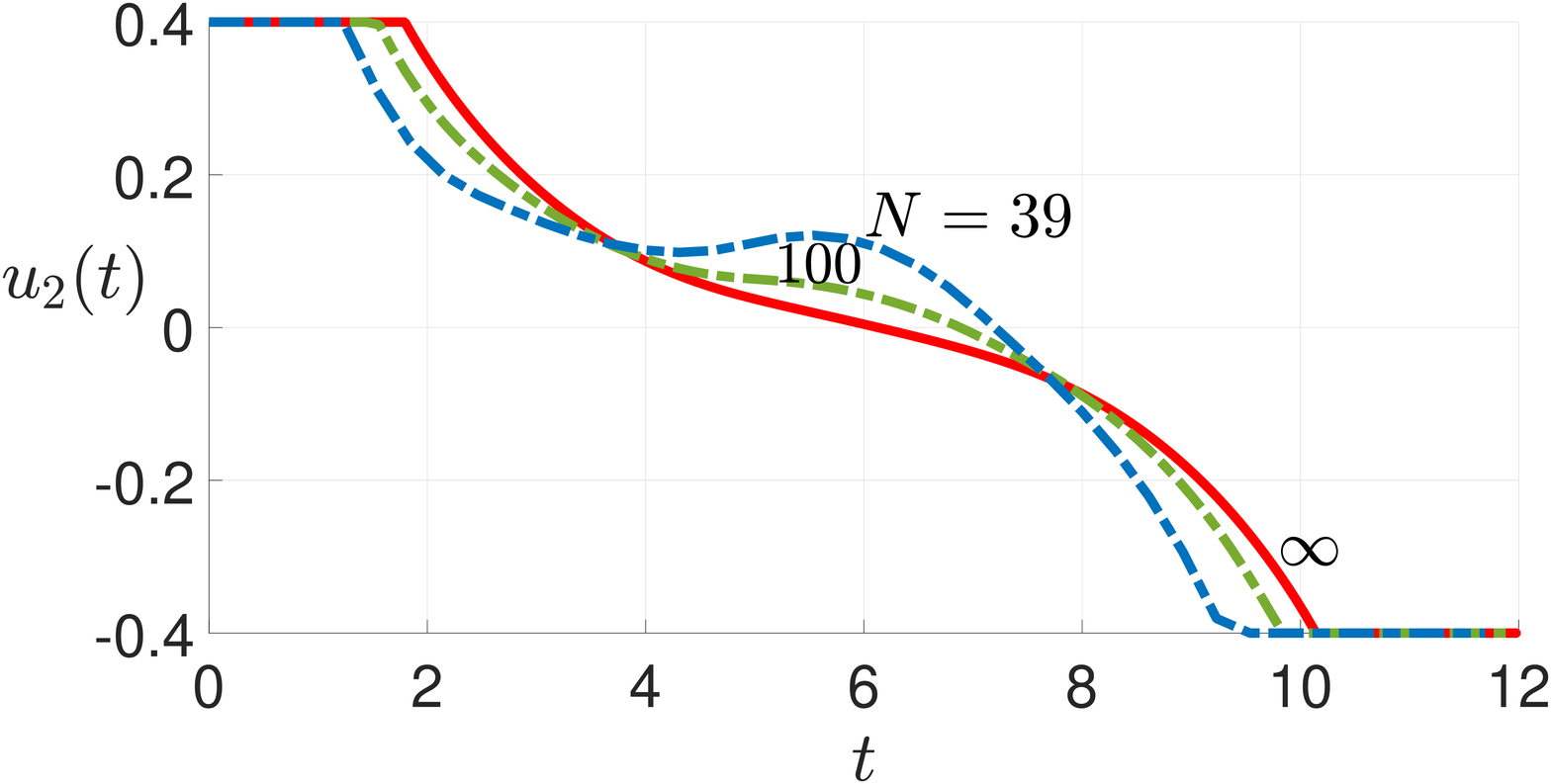}
  \caption{\small $u_2(\cdot)$ by algorithm PDP-2.}
  \label{fig:u2ApproxEg3_PDP}
  \end{subfigure}
  \caption{\sf Problem $(P3)$---Solution $u(\cdot)$ by algorithm PDP-2 under different number of discretization points ($N=39,$ and $100$; the solution when $N\to\infty$ is represented in the red solid-line curve, obtained with $N=1000$).
  \label{fig:uVSnEg3}}
\end{figure}

\section{Conclusion and Discussion}\label{sec:conclusion}

Our work is an application of the primal-dual framework and the deflected subgradient algorithm studied in \cite{BurachikXuemei2023} with a numerical implementation to solve optimal control problems. Hence our framework inherits the theoretical properties of the previous work, such as (i) strong duality (Theorem~\ref{th:StrongDual}), (ii) monotone improvement of the dual function (Proposition~\ref{prop:dualProperty}), and (iii)  every accumulation point of the primal sequence is a solution (for both step-sizes in PDP-1 and PDP-2). Moreover, PDP-2 converges in a finite number of iterations when the dual sequence is bounded (Theorem~\ref{primalCvg}(a)).

We consider infinite dimensional optimization problems which satisfy the assumptions~{\rm\hyperlink{H0}{(H0)}--\hyperlink{H2}{(H2)}}. We have presented here a systematic technique to verify these assumptions in the framework of very general types of optimization problems (Theorem~\ref{th:example_verify_H}). We show this for problems $(P2)$ and $(P3)$ (see Theorem~\ref{th:verify_p1} and Corollary~\ref{lem:DP for (P3)}). Particularly, we have demonstrated how to reformulate the ODE constraints in Problems $(P2)$ and $(P3)$ as equality constraints so that the assumptions~{\rm\hyperlink{H0}{(H0)}--\hyperlink{H2}{(H2)}} can be verified.

Problem $(P2)$ is the constrained optimal control of the double integrator and Problem $(P3)$ is the more challenging free-flying robot, again constrained. We illustrate the iterations of the control variables using our PDP algorithm, using Ipopt in solving its subproblems, for Problem $(P2)$ in Figure~\ref{fig:uCvgEg2} and for Problem $(P3)$ in Figure~\ref{fig:uApproxEg3}. Compared with using Ipopt alone, our PDP algorithm shows a better performance in solving the challenging flying robot problem in terms of the CPU time and in the case of increased number of discretization points (see Table~\ref{table:FFRobot}).

In PDP-1, the step-size $s_k$ as in~\eqref{sk_DSG1} gets smaller in each iteration and the increments in the penalty parameter $c_k$ gets smaller accordingly. In numerical practice, PDP-1 has a much bigger increment in $c_k$ in the first one or two iterates. Hence, PDP-1 can often find the optimal dual solution in just a few iterates. Although PDP-1 approaches  the dual solution very quickly in the first few iterates, it becomes sluggish in the following iterates since $c_k$ is incremented very slowly.

Algorithm PDP-2 uses the step-size $s_k$ as in \eqref{sk_DSG2}. The increase in the penalty parameter $c_k$ is small in the initial iterates, compared to PDP-1. PDP-2 resembles the penalty method with constant increments in $c_k$. In practice, PDP-2 approaches the dual optimal solution slowly but steadily. To avoid the slow progress of PDP-1 in later iterations, we would suggest a  hybrid strategy, which applies PDP-1 in the first few iterates and then switches to PDP-2.

Apart from the examples in this paper, our algorithm should be applicable to more general optimal control problems, for example the more challenging problems involving mixed state-control constraints or even pure state constraints.

The PDP algorithm could be used as a theoretical tool that provides dual information on a given problem. When PDP is applied to solve problems where the analytical solution can be found relatively easily (e.g., the unconstrained double integrator problem), then the use of the algorithm is likely to reveal new properties of the problem, especially those arising from duality.  This is work in progress.

Practically speaking, because we are doing the iterations with discretized functions, there is already a level of inexactness introduced into the subproblem solutions. The numerical experiments show that this kind of inexactness is dealt with successfully. Other types of  inexact versions could be done by extending the work presented in \cite{BKMinexact2010}, which deals with finite dimensional problems. The cases we consider in the present paper are infinite dimensional and hence more challenging. Theoretical investigation of inexactness in infinite dimensions remains an open problem and  hence the topic of future research.

\setcounter{section}{0}
\appendix
\renewcommand{\thesection}{Appendix \Alph{section}}
\section{} \label{sec:proofs}
\setcounter{equation}{0}
\renewcommand{\theequation}{A.\arabic{equation}}

\textbf{\hypertarget{prf:lem_varfi}{Proof of Lemma \ref{lem:varfi}}}.

\begin{proof} 
Assume that $\{u_k\}\subset ({\mathcal L}^2([0,T];\R))^m$ is such that $u_k\rightharpoonup u$. Then for all $j=1,\ldots,m$ we have $u^j_k\rightharpoonup u^j$ (weakly in ${\mathcal L}^2([0,T];\R)$). For $t\in [0,T]$, define $\xi_t(s)=0$ for $s\in (t,T]$ and $\xi_t(s)=1$ for $s\in [0,t]$. Then $\xi_t\in {\mathcal L}^2([0,T];\R)$ and it is easy to check that $\|\xi_t\|_2=\sqrt{t}$. The weak convergence yields
\begin{equation}
    \label{eq:E2}
 \lim_{k\to \infty} \int_{0}^t u^j_k(s) ds= \lim_{k\to \infty} \int_{0}^T u^j_k(s) \xi_t(s) ds=  \int_{0}^T u^j(s) \xi_t(s) ds  = \int_{0}^t u^j(s) ds,
\end{equation}
where we used the definition of $\xi_t$ in the first and last equality, and the assumption of weak convergence in the second one. For each $j=1,\ldots,m$, define $f^j_k,f^j:[0,T]\to \R $ as $f^j_k(t):= \int_{0}^t u^j_k(s) ds$ and $f^j(t):= \int_{0}^t u^j(s) ds$. By \eqref{eq:E2}, 
    \begin{equation}
        \label{eq:E3}
        \lim_{k\to \infty}f^j_k(t)=f^j(t),
    \end{equation}
for every $t\in [0,T]$ and each $j=1,\ldots,m$. We will apply Theorem \ref{th:LDCT} to the sequence $\{f^j_k\}$ for each $j=1,\ldots,m$. Since $\{u_k\}$ converges weakly, the set $U_0:=\{u_k\::\: k\in \mathbb{N}\}\cup \{u\}$ is weakly compact, and hence bounded by Corollary \ref{cor:CCB}. Therefore, there exists $L_0>0$ such that $\|u_k^j\|_2\le L_0$ for all $k\in \mathbb{N}, \, j=1,\ldots,m$. We show next that the sequence $\{f^j_k\}$ is bounded over $[0,T]$ for each $j=1,\ldots,m$. Indeed, for every $t\in [0,T]$ we can use Cauchy-Schwartz to write
\begin{equation}
    \label{eq:fk bded}
    |f^j_k(t)|= \Big|\int_{0}^t u^j_k(s) ds \Big|= |\langle u^j_k,\xi_t\rangle_{{\mathcal L}^2([0,T];\R)}|\le \|u^j_k\|_2 \|\xi_t\|_2 \le L_0 \sqrt{t}\le L_0 \sqrt{T},
\end{equation}
where $\langle \cdot,\cdot \rangle_{{\mathcal L}^2([0,T];\R)}$ denotes the scalar product in ${\mathcal L}^2([0,T];\R)$. This establishes the desired boundedness. Using now Theorem \ref{th:LDCT} we deduce that
\begin{equation}
    \label{eq:int fk}
 \int_{0}^t f^j(s) ds = \lim_{k\to \infty}  \int_{0}^t f_k^j(s) ds,
\end{equation}
which by definition of $f^j_k, f^j$ re-writes as
\begin{equation}
    \label{eq:int fk 2}
 \int_{0}^t \int_{0}^s u^j(r) dr ds = \lim_{k\to \infty}  \int_{0}^t \int_{0}^s u_k^j(r) dr ds,
\end{equation}
Using the definition of $\pi(u)$ and \eqref{eq:int fk 2} we deduce that
\begin{equation*}
    \label{eq:pi}
    \lim_{k\to \infty} \pi(u_k)(t)= \pi(u_k)(t),
\end{equation*}
for every $t\in [0,T]$. Since $\varphi$ is continuous, we further have
\begin{equation}
    \label{eq:varfi}
    \lim_{k\to \infty} \varphi(\pi(u_k)(t))= \varphi(\pi(u_k)(t)),
\end{equation}
for every $t\in [0,T]$. Consider the functions $\omega_k:= \varphi(\pi(u_k))$ and $\omega:= \varphi(\pi(u))$. By definition, $\omega_k,\omega:[0,T]\to \R$. We claim that $\omega_k,\omega\in {\mathcal L}^2([0,T];\R)$. Indeed, the boundedness assumption on $\varphi$ gives
\[
\|\omega_k\|^2_2= \int_{0}^T  |\varphi(\pi(u_k)(s)|^2 ds \le L_1^2\, T.
\]
An identical argument shows that $\omega\in {\mathcal L}^2([0,T];\R)$. Note that this claim implies that  $\omega_k,\omega\in {\mathcal L}^2([0,t];\R)$ for every $t\in [0,T]$. Now we claim that $\omega_k\to \omega$ strongly in ${\mathcal L}^2([0,t];\R)$ for every $t\in [0,T]$.  Namely, we claim that 
\begin{equation}
    \label{claim 2}
     \lim_{k\to\infty}\|\omega_k-\omega\|^2_{{\mathcal L}^2([0,t];\R)}=  \lim_{k\to\infty}\int_0^t | \omega_k(s)-\omega(s)|^2 ds=0,
\end{equation}
 for every $t\in [0,T]$. Indeed, for $s\in [0,T]$ define $\Delta_k(s):= | \omega_k(s)-\omega(s)|^2$. The definition of the functions $\omega_k,\omega$ and \eqref{eq:varfi} imply that $\lim_{k\to\infty} \Delta_k(s) =0$ for every $s\in [0,T]$. Using the boundedness of $\varphi$ and the definitions, we also have that
\[
\Delta_k(s)= |\omega_k(s)|^2 +|\omega(s)|^2+ 2 |\omega_k(s)|\,|\omega(s)|\le 4 L_1^2,
\]
for every $s\in [0,T]$. Now we can apply Theorem \ref{th:LDCT}(b), \eqref{eq:varfi}, and the definitions to deduce that
\[
\lim_{k\to\infty} \int_0^t \Delta_k(s) ds = \lim_{k\to\infty} \int_0^t | \omega_k(s)-\omega(s)|^2 ds =\int_0^t  \lim_{k\to\infty} \Delta_k(s) ds = 0,
\]
for every $t\in [0,T]$. This establishes \eqref{claim 2}. Using the definition of $\xi_t$, the above expression re-arranges as follows.
\[
\begin{array}{rcl}
    0 &= & \lim_{k\to\infty} \int_0^t | \omega_k(s)-\omega(s)|^2 ds =\int_0^T   \xi_t(s) | \omega_k(s)-\omega(s)|^2 ds ,\\
     & &\\
     &&= \int_0^T   (\xi_t(s))^2 | \omega_k(s)-\omega(s)|^2 ds = \int_0^T   |\xi_t(s) \omega_k(s)-\xi_t(s)\omega(s)|^2 ds,
\end{array}
\]
showing that the sequence $\{\xi_t\omega_k\}$ converges strongly in ${\mathcal L}^2([0,T];\R)$ to $\xi_t\omega$.  To complete the proof, we will use Proposition \ref{prop:weak-strong}, for the space $X:={\mathcal L}^2([0,T];\R)$, the strongly convergent sequence $\{\xi_t\omega_k\}$, and each of the weakly convergent sequences $\{u^j_k\}$ for $j=1,\ldots,m$. This proposition implies that
\begin{equation}
    \label{eq:conv-final}
    \begin{array}{rcl}
  \lim_{k\to\infty} \int_0^t  \omega_k(s) u^j_k(s) ds       &  =  &  \lim_{k\to\infty} \int_0^T \xi_t(s) \omega_k(s) u^j_k(s) ds\\
         & &\\
         &=& \int_0^T \xi_t(s) \omega(s) u^j(s) ds =\int_0^t  \omega(s) u^j(s) ds. \\ 
    \end{array}
\end{equation}
the above expression and the definition of $\omega_k,\omega$ imply that \eqref{claim:eta} holds. Finally \eqref{claim:rho} will follow from applying Theorem \ref{th:LDCT}(b) to the sequence $\{ \eta^{\varphi}_j(u_k,\cdot)\}$. Indeed, \eqref{eq:conv-final} means that $\lim_{k\to \infty} \eta^{\varphi}_j(u_k,s)=\eta^{\varphi}_j(u,s) $ for every $s\in [0,T]$. We now use an argument similar to the one in \eqref{eq:fk bded} to show the boundedness of the sequence over $[0,T]$. Indeed, fix $t\in [0,T]$.
\[
\begin{array}{rcl}
 |\eta^{\varphi}_j(u_k,t)|&=&|\int^t  \omega_k(s) u^j_k(s) ds | =
 |\int^T  \xi_t(s)\omega_k(s) u^j_k(s) ds |
 \\
 &&\\
     & =& |\langle u^j_k,\xi_t \omega_k\rangle_{{\mathcal L}^2([0,T];\R)}|\le \|u^j_k\|_2 \|\xi_t\omega_k\|_2 \le L_0 \,L_1\sqrt{t}\le L_0 L_1 \sqrt{T},
\end{array}
\]
Now Theorem \ref{th:LDCT}(b) yields 
\[
 \lim_{k\to\infty} \int_0^t \eta^{\varphi}_j(u_k,s) ds=   \int_0^t \eta^{\varphi}_j(u,s) ds,
\]
which is \eqref{claim:rho}. The proof is complete.
\end{proof}

\newpage
\textbf{\hypertarget{prf:h for P3}{Proof of Lemma~\ref{lem:h for P3}}}

\begin{proof}
Fix $T:=12$. The desired function $h$ will be obtained by repeatedly applying the Fundamental Theorem of Calculus to each equation of the ODE system in Problem $(P3)$, starting with the last equation. Consider the last ODE equation in $(P3)$, together with its boundary conditions, namely
\begin{equation}\label{sys:6}
 \begin{array}{lcr}
 \left\{ 
 \begin{array}{l}
\dot x_6(t)=0.2\left(u_1(t)-u_2(t)\right)=:g_6(u(t)), \\
\\
      x_6(0)=0,\quad    x_6(T)=0,
 \end{array}
\right.
\end{array}
    \end{equation}
By Fundamental Theorem of Calculus this system {is equivalent to} 
\(
G_6(u(T))=0,
\)
where $G_6(u(t)):=\ds\int_0^t g_6(u(s)) ds$. Hence, for every $u\in {\mathcal L}^2([0,T];\R)\times {\mathcal L}^2([0,T];\R)$ we define
\begin{equation*}\label{eq:G6}
h_6(u):= G_6(u(T))=\int_0^T g_6(u(s)) ds  =
a_1 \int_0^T u_1(s) ds - a_2 \int_0^T u_2(s) ds,
\end{equation*}
where we used the definition of $g_6$ in the third equality with the notation $a_1:=a_2:=0.2$. Thus, $h_6:{\mathcal L}^2([0,T];\R)\times {\mathcal L}^2([0,T];\R)\to \R$. In particular, we can write the solution $x_6(\cdot)$ of system \eqref{sys:6} as a function of $u$. Namely,
\begin{equation}\label{eq:x6}
 x_6(t)=\int_0^t g_6(u(s)) ds  =
a_1 \int_0^t u_1(s) ds - a_2 \int_0^t u_2(s) ds.   
\end{equation}
Since $x_6$ is a function of $u$, a similar procedure can be used for the third equation in $(P3)$ and its boundary conditions. Indeed, using \eqref{eq:x6}

\begin{equation}
 \label{sys:3}
\begin{array}{llll}
\hbox{the system}&
\left\{\begin{array}{l}
  \dot x_3(t)  =x_6(t)  = \int_0^t g_6(u(s)) ds , \\
   x_3(0)=\pi/2,\, x_3(T)=0,
\end{array}
\right\},
     & \hbox{is equivalent to} &
G_3(u(T))=0,
 \\
\end{array}
\end{equation}
where $G_3(u(t)):=\pi/2+ \ds\int_0^t \int_0^r g_6(u(s)) ds dr$ and $g_6$ is defined in the previous system. We can thus write, for every $u\in {\mathcal L}^2([0,T];\R)\times {\mathcal L}^2([0,T];\R)$
\begin{equation*}\label{eq:G3a}
    \begin{array}{rcl}
h_3(u):=G_3(u(T))     &=  & \pi/2+\ds\int_0^T \int_0^r g_6(u(s)) ds \,dr \\
&&\\
     &  &= \pi/2+ a_1 \ds\int_0^T\int_0^r u_1(s) ds \, dr - a_2  \int_0^T \int_0^r u_2(s) ds\, dr.
\end{array}
\end{equation*}
Hence, $h_3:{\mathcal L}^2([0,T];\R)\times {\mathcal L}^2([0,T];\R)\to \R$.  We proceed now to define the remaining $h_i$'s. As a consequence of our last construction, we have that we can write the solution $x_3(\cdot)$ of system \eqref{sys:3} as a function of $u$. Namely,
 $x_3(t)=G_3(u(t))$. Therefore, we have the following equivalence
\[
\begin{array}{lll}

\left\{\begin{array}{l}
  \dot x_5(t)    = \left(u_1(t)+u_2(t)\right)\sin x_3(t)\\
  \\
 \;\; = \left(u_1(t)+u_2(t)\right)\sin G_3(u(t))=:g_5(u(t)), \\
  \\
   x_5(0)=0,\, x_5(T)=0,
\end{array}
\right\},
     & \hbox{if and only if} &
G_5(u(T))=0,
 \\
\end{array}
 \]
where $G_5(u(t)):=\ds\int_0^t g_5(u(s)) ds$. As above, use the definition of $g_5$ to define
\begin{equation*}
   \label{def:h5} 
    \begin{array}{rcl}
\hspace{-2mm}h_5(u):= G_5(u(T))    &=   &\ds\int_0^T g_5(u(s)) ds = \ds\int_0^T u_1(s) \sin G_3(u(s))   ds +  \ds\int_0^T u_2(s) \sin G_3(u(s)) ds.
    \end{array}
\end{equation*}
Using again the fact that $x_3(t)=G_3(u(t))$, write
\[
\begin{array}{lll}

\left\{\begin{array}{l}
  \dot x_4(t)    = \left(u_1(t)+u_2(t)\right)\cos x_3(t)\\
  \\
 \;\; = \left(u_1(t)+u_2(t)\right)\cos G_3(u(t))=:g_4(u(t)), \\
  \\
   x_4(0)=0,\, x_4(T)=0,
\end{array}
\right\},
     & \hbox{is equivalent to} &
G_4(u(T))=0,
 \\
\end{array}
 \]
where $G_4(u(t)):=\ds\int_0^t g_4(u(s)) ds$. So we define
\begin{equation*}
   \label{def:h4} 
h_4(u):= G_4(u(T)):=\int_0^T g_4(u(s)) ds =
\int_0^T\left(u_1(t)+u_2(t)\right)\cos G_3(u(t)) dt. 
\end{equation*}
Again, note that the system
\[
\begin{array}{lll}
\left\{\begin{array}{l}
  \dot x_2(t)    = x_5(t) = \ds\int_0^t g_5(u(s))ds =:g_2(u(t)), \\
  \\
   x_2(0)=-10,\, x_2(T)=0,
\end{array}
\right\},
     & \hbox{is equivalent to} &
G_2(u(T))=0,
 \\
\end{array}
 \]
where $G_2(u(t)):=-10+\ds\int_0^t \int_0^r g_5(u(s))ds dr$. So we define 
\[
h_2(u):= G_2(u(T)):=-10+\ds\int_0^T g_2(u(s)) ds.
\] 
Finally, we can write
\[
\begin{array}{lll}

\left\{\begin{array}{l}
  \dot x_1(t)    = x_4(t) = \ds\int_0^t g_4(u(s))ds =:g_1(u(t)), \\
  \\
   x_1(0)=-10,\, x_1(T)=0,
\end{array}
\right\},
     & \hbox{is equivalent to} &
G_1(u(T))=0,
 \\
\end{array}
 \]
where $G_1(u(t)):=-10+\ds\int_0^t \int_0^r g_4(u(s))ds dr$.  So we define \\ $h_1(u):=-10+\ds\int_0^T g_1(u(s)) ds$. Altogether, the ODE system in $(P3)$ can be rewritten in terms of $u$ as
\begin{equation}\label{app:h}
  h(u)=(G_1(u(T)),\ldots,G_6(u(T)))=(h_1(u),\ldots,h_6(u))=0\in \R^6.
\end{equation}
\end{proof}

\textbf{\hypertarget{prf:cond b for P3}{Proof of Lemma~\ref{lem:cond b for P3}}}

\begin{proof}
The w-closedness will follow from the w-compactness and Fact \ref{lem:Weakly Closed}(b). We proceed to establish the w-compactness. For $h$ as in Lemma \ref{lem:h for P3}, write
   \[
\Gamma(z):=\{u\in K_2\::\: h(u)=z\}= h^{-1}(z)\cap K_2=\bigcap_{j=1}^6 \left[h_j^{-1}(z_j)\cap K_2\right].
\] 
Call $\Gamma_j:=h_j^{-1}(z_j)\cap K_2$ for $j=1,\ldots,6$. We will show that each $\Gamma_j$ is weakly compact for $j=1,\ldots,6$. Indeed, with the notation of Lemma \ref{lem:varfi}, and the definition of $h_j$ given in Lemma \ref{lem:h for P3}, we have that
\begin{equation}\label{eq:eta-ro}
 \begin{array}{rcll}
   h_1(u)  &=  &  -10 +\rho_1^{\varphi_c}(u,T)+\rho_2^{\varphi_c}(u,T),
    
   &h_2(u)  = -10 + \rho_1^{\varphi_s}(u,T)+\rho_2^{\varphi_s}(u,T),
\\
&&&\\
  h_3(u)  & =& \pi/2+ \rho_1^{\varphi_1}(u,T)-\rho_2^{\varphi_1}(u,T),
       &  h_4(u)   = \eta_1^{\varphi_c}(u,T)+\eta_2^{\varphi_c}(u,T),\\
 &&&\\ 
  h_5(u)  & =&  \eta_1^{\varphi_s}(u,T)+\eta_2^{\varphi_s}(u,T),
& h_6(u)   = 0.2\,( \eta_1^{\varphi_1}(u,T)-\eta_2^{\varphi_1}(u,T)),
\end{array}
\end{equation}
where $\varphi_c:=\cos(\cdot)$, $\varphi_s:=\sin(\cdot)$ and $\varphi_1(r)=1$ for every $r\in [0,T]$. Fix $j\in \{1,\ldots,m\}$. By Theorem \ref{th:ES}, $\Gamma_j$ is weakly compact if and only if it is sequentially weakly compact. Namely, if and only if, for every sequence $\{u_k\}\subset \Gamma_j$, there exists a subsequence $\{u_{k_l}\}\subset \{u_k\}$ s.t. $u_{k_l}\rightharpoonup u\in \Gamma_j$.  Take now any sequence $\{u_k\}\subset \Gamma_j$. Since $\Gamma_j\subset K_2$ and $K_2$ is weakly compact, there exists a subsequence 
$\{u_{k_l}\}\subset \{u_k\}$  s.t. $u_{k_l}\rightharpoonup u\in K_2$. Because  $\{u_{k_l}\}\subset \Gamma_j$ we have that $h_j(u_{k_l})=z_j$. By \eqref{claim:eta} and \eqref{claim:rho} in Lemma \ref{lem:varfi} we deduce from \eqref{eq:eta-ro} that 
\[
z_j= \lim_{l\to \infty} h_j(u_{k_l})= h_j(u),
\]
for $j=1,\ldots,6$. Hence, $u\in \Gamma_j$ as wanted and $\Gamma_j$ is weakly compact for all $j=1,\ldots,m$. Therefore $\Gamma(z)$ is w-compact and hence w-closed.
\end{proof}

\def\cprime{$'$}

\section*{Acknowledgments}
The authors offer their warm thanks to the Editor for their efficient handling of the paper.  They are also indebted to an anonymous reviewer whose comments improved the manuscript.  Xuemei Liu was supported by an Australian Government Research Training Program Scholarship.

\end{document}